\renewcommand{\citet}[1]{\cite{#1}}
\pgfplotsset{compat=1.5}
\theoremstyle{plain}
\newtheorem*{prop*}{Proposition}
\newcounter{mycomment} 
\newcommand{\mycomment}[2][]{
\refstepcounter{mycomment}
	{
	\todo[color={blue!100!green!33},size=\small,inline]{%
		\textbf{Comment [\uppercase{#1}\themycomment]:}~#2
	}
	}
}
\tikzset{
    state/.style={
           rectangle,
           rounded corners,
           draw=black, very thick,
           minimum height=2em,
           inner sep=2pt,
           text centered,
           },
}
\newcommand{\eq}[1]{\begin{equation} #1 \end{equation}}
\newcommand{\eqnonumber}[1]{\begin{equation*} #1 \end{equation*}}
\newcommand{\Cbb}{\mathbb{C}}
\newcommand{\Ibb}{\mathbb{I}}
\newcommand{\Rbb}{\mathbb{R}}
\newcommand{\Csc}{\mathscr{C}}
\newcommand{\Bsc}{\mathscr{B}}
\newcommand{\1}{{\bf 1}}
\newcommand{\0}{{\bf 0}}
\newcommand{\bb}{{\boldsymbol b}}
\newcommand{\ee}{{\boldsymbol e}}
\newcommand{\gb}{{\boldsymbol g}}
\newcommand{\nb}{{\boldsymbol n}}
\newcommand{\tb}{{\boldsymbol t}}
\newcommand{\ub}{{\boldsymbol u}}
\newcommand{\wb}{{\boldsymbol w}}
\newcommand{\x}{\boldsymbol x}
\newcommand{\Bc}{\mathcal{B}}
\newcommand{\Gc}{\mathcal{G}}
\newcommand{\Ic}{\mathcal{I}}
\newcommand{\Oc}{\mathcal{O}}
\newcommand{\Mc}{\mathcal{M}}
\newcommand{\Vc}{\mathcal{V}}
\newcommand{\Wc}{\mathcal{W}}
\newcommand{\Sc}{\mathcal{S}}
\newcommand{\betab}{\boldsymbol \beta}
\newcommand{\cs}{\boldsymbol \sigma}
\newcommand{\gammab}{\boldsymbol \gamma}
\newcommand{\Phib}{\boldsymbol \Phi}
\newcommand{\taub}{\boldsymbol \tau}
\newcommand{\lambdab}{\boldsymbol \lambda}
\newcommand{\Sigmab}{\boldsymbol \Sigma}
\DeclareMathOperator{\dive}{div}
\DeclareMathOperator{\grad}{grad}
\DeclareMathOperator{\vectorspan}{span}
\DeclareMathOperator{\area}{area}
\DeclareMathOperator{\dist}{dist}
\newcommand{\msp}[1]{\hbox{\hskip #1in}}
\newcommand{\mmsp}{\hbox{\hskip 0.2in}}
\newcommand{\au}{^\text{aux}}
\newcommand{\modes}{{I,II}}
\newcommand{\xt}{\x_{t}}
\newenvironment{rmk}[1][]{\par\vskip0.5\baselineskip\noindent\textit{Remark} (#1). \it}{\hfill\vskip0.5\baselineskip}
\newenvironment{justification}[1][]{\par\vskip0.5\baselineskip\noindent\textit{Justification} (#1).  }{\qed\vskip0.5\baselineskip  }
\renewcommand{\todo}[1]{{\color{red}\vspace{5 mm}\par \noindent
  \marginpar{\textsc{ToDo}} \framebox{\begin{minipage}[c]{0.95
        \textwidth} \tt #1 \end{minipage}}\vspace{5 mm}\par}}
\newcommand{\comment}[1]{{\color{blue}\vspace{5 mm}\par \noindent
  \marginpar{\textsc{Comment}} \framebox{\begin{minipage}[c]{0.95
        \textwidth} \tt #1 \end{minipage}}\vspace{5 mm}\par}}
\renewcommand{\comment}[1]{}
\renewcommand{\grad}{\nabla}
\renewcommand{\dive}{\nabla\cdot}
\renewcommand{\grad}{\nabla}
\renewcommand{\dive}{\nabla\cdot}
\newcommand{\ic}[1]{\iffalse \fi}
\renewcommand{\todo}[1]{}
\newcommand{\deleted}[2][]{}
\renewcommand{\comment}[1]{}
\newcommand{\added}[2][]{#2}
\newcommand{\replaced}[3][]{#2}
\newcommand{\sorted}[1]{}
\begin{document}
\title{
Computing stress intensity factors for curvilinear cracks
}

\author{Maurizio M. Chiaramonte\affil{1}, Yongxing Shen\affil{2}\corrauth, Leon M. Keer\affil{3}, \\ and Adrian J. Lew\affil{1}\corrauth}  
\runningheads{M. M. Chiaramonte,  Y. Shen, L. M. Keer, and A. J. Lew}{Computing stress intensity factors for curvilinear cracks} 
\address{\centering
\affilnum{1}Department of Mechanical Engineering, Stanford University, 496 Lomita Mall, Stanford CA 94305, USA\\
\href{mailto:mchiaram@stanford.edu,lewa@stanford.edu}{\normalfont\ttfamily\{mchiaram,lewa\}@stanford.edu } 
\vskip 0.5\baselineskip
\affilnum{2}University of Michigan-Shanghai Jiao Tong University Joint Institute, Shanghai Jiao Tong University, 800 Dongchuan Road, Shanghai, 200240, China\\ 
\href{mailto:yongxing.shen@sjtu.edu.cn}{\normalfont\ttfamily yongxing.shen@sjtu.edu.cn}\\
\vskip 0.55\baselineskip
\affilnum{3}Department of Mechanical Engineering, Northwestern University, 2145 Sheridan Rd., Evanston IL 60208, USA\\
\href{mailto:l-keer@northwestern.edu}{\normalfont\ttfamily l-keer@northwestern.edu } \\
}
\corraddr{yongxing.shen@sjtu.edu.cn, lewa@stanford.edu}

\footnotetext[0]{Contract/grant sponsor: Y. Shen (while at Universitat Polit\`ecnica de Catalunya): Marie Curie Career Integration Grant (European Commission); contract/grant
number: FP7-PEOPLE-2011-CIG-CompHydraulFrac. Y. Shen (current affiliation): National Science Foundation of China; contract/grant number: 11402146.
A. Lew,  National
  Science Foundation; contract/grant number CMMI-1301396. M.M. Chiaramonte, Office of Technology Licensing, Stanford Graduate Fellowship.}

\def\feinexamples{1}
\newcommand{\testfunctionname}{material variation }
\begin{abstract}
  The use of the interaction integral to compute stress intensity
  factors around a crack tip requires selecting an auxiliary field and
  a material variation field.  We formulate a family of these fields accounting for the curvilinear
  nature of cracks that, in conjunction with a discrete
  formulation of the interaction integral, yield optimally convergent
  stress intensity factors. We formulate three
  pairs of auxiliary and material variation
  fields chosen to yield a simple
   expression of the interaction integral for different
  classes of problems. The formulation accounts for  crack
  face tractions and body forces. Distinct features of the fields are their
  ease of construction and implementation. The resulting
  stress intensity factors are observed converging at a rate that doubles the
  one of the stress field. We provide a sketch of the theoretical justification for the
  observed convergence rates, and discuss issues such as quadratures
  and domain approximations needed to attain such  convergent behavior.
    Through two representative examples, a
  circular arc crack and a loaded power function crack, we illustrate the convergence rates of
  the computed stress intensity factors. The numerical results
  also show the independence of the method on
  the size of the domain
  of integration.

\end{abstract}
\keywords{Muskhelishvili, hydraulic fracturing, finite element methods}

\maketitle

\section{Introduction}
\label{sxn:introduction}

The stress field near the tip of a loaded crack is singular under the
assumption of linear elastic fracture mechanics.  The coefficients of the
asymptotic stress field, known as the \emph{stress intensity factors}, play a key role in
characterizing the magnitude of the load applied to the crack and predicting
its propagation.

Given the stress singularity and the poor accuracy in pointwise
evaluation of the stress field, it is often impossible to extract the
stress intensity factors directly from numerical solutions, unless a
higher-order method to compute the elastic field is adopted, such as those
proposed in Liu \emph{et al.}~\cite{liu2004}, Shen and Lew \cite{shen2009}, and Chiaramonte \emph{et al.}~\cite{chiaramonte2014b}. 


As a result, path and domain integral methods to extract the stress intensity
factors have been created precisely to circumvent this limitation. A method
of this kind typically formulates the expression of the stress intensity
factors as functionals of the solution, thus enjoying a higher order of
convergence than the \replaced[id=mc]{one of}{convergence order for} the elastic field itself.
Predominant methods of this kind have been constructed based on the $J$-integral \cite{rice1968} and the interaction integral \cite{stern1975}. 

In the context of linear elastic fracture mechanics, the $J$-integral is identified with the system's  elastic energy release rate\replaced[id=mc]{;}{,} the elastic energy that would be released per unit length of crack extension in the tangential direction. 
This integral and related ones for the computation of fracture-mechanics-related
quantities have been elaborated by Eshelby \citet{eshelby1951},
Rice \cite{rice1968}, Freund \cite{freund1977}, and many others. 
Shih \emph{et al.}~\cite{shih1986} derived the expression for the energy release rate of a thermally stressed body in the presence of crack face traction and body force.
A general treatment of such conservation integrals, including those expressed in a form of domain integrals, can be found in Moran and Shih \citet{moran1987a, moran1987b}. The domain form is better suited and more accurate for numerical computation.
Nevertheless, in the case of mixed-mode loading, $J$ is a quadratic function of all three stress intensity factors \cite{irwin57}; therefore, additional integrals are needed to determine the three quantities individually, e.g., as done by Chang and Wu \cite{chang2007} for non-planar curved cracks.

In contrast, the interaction integral, or the interaction energy integral, is able to yield the three stress intensity factors separately.
This method is based on the $J$-integral by superposing the 
elastic field of the loaded body and an {\it auxiliary}  field with known stress
intensity factors. The auxiliary field does not need to satisfy the
elasticity equations but must resemble the asymptotic solution of a
cracked elastic body corresponding to one of the three loading modes
(e.g., plane-strain mode I or mode II, or anti-plane mode
III). Therefore the auxiliary field, for straight cracks, is normally
chosen to be the asymptotic solution, as found in \cite{williams1952,
  williams1957}. Doing so readily yields the stress intensity factor
of the actual field for the chosen mode. Along with the auxiliary
field, the interaction integral requires the construction of a vector
field, named the {\it \testfunctionname}field, which indicates the
velocity (variation) of points in the reference configuration as it is
deformed into  a domain with a longer crack. Under mild conditions, the value of the
interaction integral does not depend on this choice, but a good choice
of \testfunctionname can simplify computations. For example, the
interaction integral is computed by integrating over the support of
the \testfunctionname field, so it is convenient to choose \testfunctionname
fields with small and compact support. While developing auxiliary and \testfunctionname fields for straight cracks (planar cracks in three dimension) is an amenable task, doing so for curvilinear cracks (non-planar cracks in three dimension) poses several challenges. In the following paragraphs we provide a short review of
the effort  related to the computation of stress intensity factors
with the use of the interaction integral.


Earlier methods to compute the stress intensity factors with the
interaction integral involved path integrals, such as those in Stern \emph{et al.}~\cite{stern1975} and Yau \emph{et al.}~\cite{yau1980}.
The method was later generalized to a straight-front crack in
three-dimensions \replaced[id=mc]{by}{in} Nakamura and Parks \cite{Nakamura1989} and Nakamura \cite{Nakamura1991}. A curved crack front introduces additional terms, since the popular plane-strain modes I and II auxiliary fields no longer satisfy the compatibility and the equilibrium conditions. These additional terms were accounted for in Nahta and Moran \cite{Nahta1993} for the axisymmetric case, and in Gosz \emph{et al.}~\citet{gosz1998} for general planar cracks in three dimensions. 
Kim \emph{et al.}~\cite{Kim2001} adopted auxiliary fields corresponding to penny-shaped cracks, as well as conventional plane-strain and anti-plane ones.
An alternative approach was given by Daimon and Okada \cite{Daimon2014} who adopted a compatible auxiliary field and accounted for its lack of equilibrium by superposing a numerically computed displacement field with the finite element method.
 A study of the effect of omitting some terms accounting for the curved front is given by Walters \emph{et al.}~\cite{walters2005}.  

More recently the method of
\citet{gosz1998} was adapted for non-planar cracks in Gosz and Moran
\cite{gosz2002}, the latter of which is arguably a milestone in the development of domain integral methods to extract stress intensity factors from curvilinear cracks in 2D and non-planar cracks in 3D. In
\cite{gosz2002}, the method constructs the auxiliary fields through the use of
curvilinear coordinates and their corresponding  covariant basis to account for the
crack curvature. This procedure \replaced[id=mc]{constructs}{allows for a simple construction of} 
the
auxiliary fields by juxtaposing the components of the stress fields of 
\citet{williams1952} with the described basis. 
In \cite{sukumar2008}, Sukumar \emph{et al.}~implemented the method of \cite{gosz2002} in combination with an extended finite element method in a three-dimensional setting \cite{Moes2002} and a fast-marching method. The main drawback of the curvilinear coordinates of \cite{gosz2002} is the need to  perform  boundary integrals over  both the real crack surfaces and a pair of fictitious crack surfaces. 
In \cite{gonzalez2013b}, Gonz\'alez-Albuixech \emph{et al.} proposed
another curvilinear coordinate system  that can eliminate the
integration on the fictitious crack surfaces, and in this way facilitate the computation.


In \cite{gonzalez2013, gonzalez2013b}, Gonz\'alez-Albuixech \emph{et al.}~studied the properties of the aforementioned
methods in two- and three-dimensions, respectively, but not with all terms arising from the derivation of the interaction integral. Such omission of terms may have contributed to the observed slow convergence and occasional divergence.
This observation confirms the statement of \cite{gosz2002} that all terms arising from the lack of compatibility and equilibrium of the auxiliary field have to be taken into account.

The domain version of the interaction integral has also been generalized to functionally graded materials \cite{moghaddam2011, ghajar2011}.

\ifnum1=0
\mycomment{MC: Novelty 
\begin{itemize} 
\item Cast all definitions of \testfunctionname fields and auxiliary fields \item
Simple construction of auxiliary fields easily implementable \item Account for
body forces and crack face tractions \item Does not rely on level set functions
which are costly to evaluate \item Provide method that requires boundary
integration but does not require evaluation of gradient of gradient of
auxiliary fields (divergence free) \item Provide method that does not requires
evaluation of boundary integration but requires computation of  gradient of
gradient of aux \item Rapidly convergent computation \item Provide convergence
curve 
\end{itemize}
}
\fi

In this paper we present a suite of auxiliary fields and material
variations fields. By pairing two constructs of \testfunctionname
fields and two constructs of auxiliary fields, we create two kinds of
interaction integral suitable for curvilinear cracks and for
situations in which body forces and crack face tractions are
present. One kind of interaction integral is suited for applications
where crack faces are loaded (e.g. hydraulic fracturing), and the
other one is best suited for applications where body forces are
non-zero (e.g. thermally loaded materials). Moreover, no fictitious
crack face is needed, a major simplification to the predominant method
in the literature.
 
%
%

One of the two choices of the \testfunctionname fields has a constant
direction pointing to the direction of the crack growth, a
straightforward choice adopted by most authors, and the other has a
direction that is tangential to the crack near the crack tip, similar
to that proposed in \cite{Moes2002}. For a curved crack, this second
choice necessarily coincides with the first one only at the crack
tip. A key advantage of the \testfunctionname fields we introduce here \replaced[id=mc]{is their ease of construction which is reflected in their straightforward implementation in computer codes}{
 is that they are simple to construct (or implement) in the
computer}. Moreover, \deleted[id=mc]{and} in contrast to many of the existing
constructions, the magnitude of the \testfunctionname fields  is 
mesh independent. This mesh independence contributes to the observed
optimal rate of convergence.

The two auxiliary fields are constructed from the well-known
asymptotic solutions of a straight crack. Both fields respect the
discontinuity introduced by the crack, thus avoiding the evaluation of
integrals over fictitious crack faces as the method in \cite{gosz2002}
does. 
One of the auxiliary fields is obtained by ``extending'' the
asymptotic solutions past the range $[-\pi,\pi]$, and hence satisfies
equilibrium, compatibility, and the constitutive relation. The
resulting interaction integral expression then yields a term on the
crack faces, even in the absence of crack face traction. The second
auxiliary field is  an  incompatible strain field. It is
obtained by first mapping a straight crack to the curved crack near
the crack tip, and using this map to push forward the strain field of
the straight-crack asymptotic solution. \replaced[id=mc]{Then, by suitably rotating the
strain tensor at each point,}{By then suitable rotating the
strain tensor at each point,} we obtain an auxiliary strain field that
is traction-free at the curved crack faces.  This is useful for
problems in which  crack faces are traction-free\deleted[id=mc]{:}\added[id=mc]{. In fact,} if this
auxiliary field is used in combination with the tangential
\testfunctionname field, the crack face integral vanishes, resulting
in a significantly simplified expression for the interaction integral.


We showcase the convergence \deleted[id=mc]{behavior} of the stress intensity factors
obtained with the proposed fields for a set of representative examples
computed with two different finite element methods. In all cases, the
stress intensity factors converge with a rate that doubles the rate of
convergence of the strains. We also numerically demonstrate the
independence of the computed stress intensity factors from the chosen
support for the \testfunctionname field.  Although the numerical
examples adopt finite element methods to obtain an approximate
solution to the elasticity problem, the numerical implementation of
the interaction integral with the new fields is general, and can be
used in conjunction with any numerical method for the solution of the
governing equations\added[id=mc]{ (e.g. finite difference, finite volume,
boundary integral equations, isogeometric analysis, and meshless
methods)}.

The paper is organized as follows. We first state the problem that we
seek to solve in \S \ref{sxn:problem_statement}. We then proceed in \S
\ref{sxn:interint} to present the interaction integral with the
description of the new \testfunctionname and auxiliary fields. In the
same section we justify that the proposed forms of the interaction
integral are well-defined.  A numerical approximation of the
interaction integral is presented in \S \ref{sxn:computation} with
remarks on its expected convergence. \replaced[id=mc]{The}{Te} last \replaced[id=mc]{part}{section} of
\S\ref{sxn:computation} provides a step-by-step recapitulation of the
method\deleted[id=mc]{,} suited for the reader interested in a concise presentation.
In \S \ref{sxn:numerical_examples} we verify the computation of the
stress intensity factors against analytical solutions for two
problems\replaced[id=mc]{:}{,} a circular arc crack and a power function crack\deleted[id=mc]{,for which
we provide analytical solutions}. \ic{ We then conclude the manuscript
  in \S \ref{sxn:concluding_remarks}.}  Throughout the paper we
included sections titled ``{\it Justification}\deleted[id=mc]{,}'' which contain
sketches of proofs for some of the assertions we make, and they are not
essential for the description of the methods in this paper.

\section{Problem Statement}
\label{sxn:problem_statement}

We present next the problem statement\deleted[id=mc]{,} which consists \replaced[id=mc]{of the evaluation of the stress intensity factors following the solution of the elasticity fields for a cracked solid.}{ of the elasticity problem
of a cracked solid and  the computation of the stress
intensity factors.
}

\subsection{Elasticity Problem} \label{subsxn:elasticity_problem}

We consider a body $\Bc \subset \Rbb^2$ undergoing a deformation
defined by the displacement field $\ub$. We assume $\Bc$ to be an open
and connected domain with a (piecewise) smooth boundary $\partial \Bc$.
We represent the crack with a twice differentiable, simple and rectifiable curve $\Csc \subset \Bc $ and denote its crack faces with $\Csc_\pm$. The cracked domain is given by $\Bc_\Csc = \Bc\backslash\Csc$. The boundary of $\Bc_\Csc$ is the union of the crack faces and the boundary of $\Bc$, \added[id=mc]{namely} $\partial \Bc_\Csc = \partial\Bc \cup \Csc_\pm$. Let $\partial
\Bc_\Csc$  be decomposed into $ \partial_{\tau} \Bc_\Csc$ and
$\partial_d \Bc_\Csc$ such that $\partial_{\tau}
\Bc_\Csc\supseteq\Csc_\pm$, $\partial_{\tau}  \Bc_\Csc \cup \partial_d
\Bc_\Csc =\partial  \Bc_\Csc$, and $\partial_{\tau}  \Bc_\Csc
\cap \partial_d  \Bc_\Csc = \emptyset$. Tractions $\overline \tb$ and
displacements $\overline \ub$ are prescribed over $\partial_{\tau}
\Bc_\Csc$ and $\partial_d \Bc_\Csc $, respectively, while a body force
field $\bb$ is applied over $ \Bc_\Csc$. Let $\xt $ denote any one of
the two crack tips. We denote by $\nb$ the unit external normal to
${\cal B}$, as well as the unit external normal to each one of the two
faces of the crack. 
Figure
\ref{fig:problem} shows the schematic of the problem configuration.
\begin{figure}[H] \centering
\input{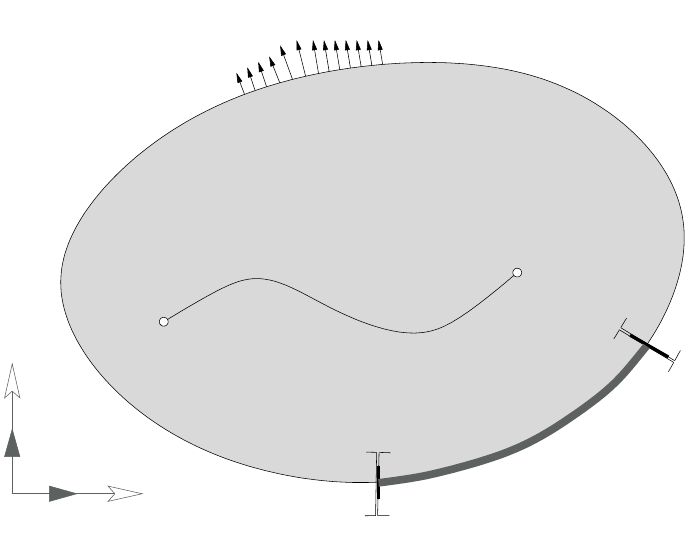_t} \caption{The configuration of the problem}
\label{fig:problem} 
\end{figure}
We confine ourselves to planar linear elasticity in the context of infinitesimal
deformations. The elasticity problem statement reads: Given $\bb$ , $\overline \ub$, and $\overline \tb $  find $\ub:\Bc_\Csc \to
\Rbb^2$ such that 
\begin{subequations}
\begin{alignat}{2}
\dive \cs(\grad \ub) + \bb &= 0,\quad&&  \text{ in } \Bc_\Csc,\label{eq:equilibrium}\\ 
\ub &= \overline \ub,\quad&&\text{ on } \partial_d\Bc_\Csc,\\
 \cs(\grad \ub)\nb &= \overline \tb,\quad&&  \text{ on } \partial_\tau \Bc_\Csc,
\label{eq:traction}
\end{alignat}
\label{eq:elasticity}
\end{subequations}
where   $\cs$ is the stress tensor. This is  given by
\begin{equation}
\label{eq:constitutiverelation}
\cs(\grad\ub) = \Cbb:\grad\ub,
\end{equation} 
and 
\begin{equation*}
\Cbb = \hat\lambda  \1 \otimes \1 + 2 \mu \Ibb, \mmsp \hat \lambda = 
\begin{cases} \lambda, &
\text{ for plane strain},\\ \dfrac{2 \lambda  \mu }{\lambda +2 \mu }, &
\text{ for plane stress}.
\end{cases}
\end{equation*}
The constants $\lambda$ and $\mu$ are Lam\'e's first and second parameters,
respectively,  $\1$ is the identity second-order tensor, 
and $\Ibb$ is the fourth-order symmetric
identity operator given by
 \[ \Ibb =
 \frac12(\delta_{ik} \delta_{jl} + \delta_{il}\delta_{jk} ) \ee_i \otimes \ee_j
 \otimes \ee_k \otimes \ee_l,
 \] 
where $\{\ee_1,\ee_2\}$ is a  Cartesian basis, and an index repeated twice indicates sum from 1 to 2 in such
index.

\subsection{Crack Tip Coordinates and Stress Intensity Factors}
\label{subsxn:fracture_mechanics_problem}

To aid the definition of the stress intensity factors we 
first introduce a system of coordinates and a family of vector bases.  
\begin{figure}[htbp]
\centering
\input{./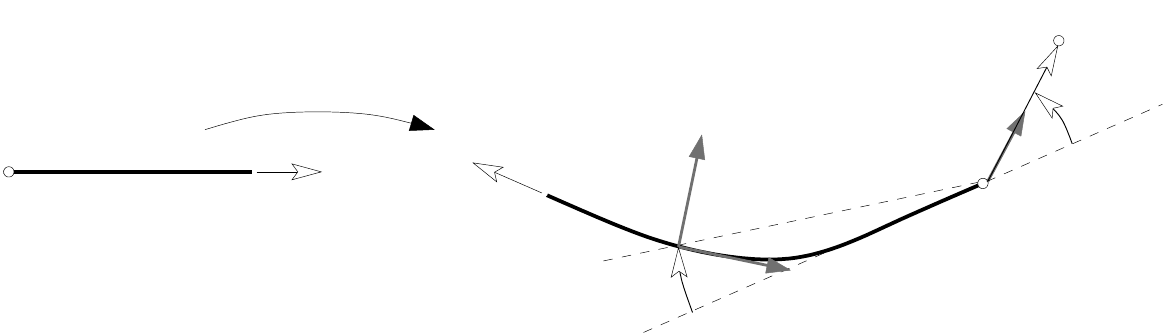_t}
\caption{Description of local basis and coordinates}
\label{fig:loccrds}
\end{figure}

Let $(r,\vartheta)$ be crack tip polar coordinates  as shown in
Fig. \ref{fig:loccrds}, and let $B_\rho( \hat \x) =
\left\{ \x\in\mathbb{R}^2 \middle| | \x - \hat \x | < \rho \right\}$
be the open ball of radius $\rho>0$ centered at $\hat \x$.  The radial coordinate
is defined as $r(\x):= | \x - \xt |$. Let $\Gamma:
 r\mapsto\{\x\in\Csc|\;|\x-\xt|=r\}$ be a description of part of
 the crack parametrized by the distance to the
 crack tip. We set the domain of $\Gamma$ to be $[0,\rho]$ with $\rho
 >0$ such that $B_\rho(\xt)\subset\Bc$ and that $\Gamma'\neq0$ over
 its domain of definition. As a consequence, $\Gamma$ is bijective for $r\in [0,\rho]$. We re-iterate that the crack is assumed to
 be a twice continuously differentiable curve such that $\Gamma \in
 C\added[id=mc]{^2}(\Rbb_0^+;\Rbb^2)$, where $\Rbb_0^+=\{0\}\cup\Rbb^+$. By convention,
 the possible values of the $(r,\vartheta)$ coordinates for points in
 $B_\rho(\xt)$ that we will use belong to
$$D_\rho =
\{(r,\vartheta)\in \mathbb R_0^+\times \mathbb R\mid -\pi - \zeta(r) \le \vartheta\le\pi
-\zeta(r)  \},$$
where $\zeta(r) $ is the angle between the vector $\Gamma(r) - \xt$ and $\Gamma'(0)$. In other words, $\zeta(r)$ is the angle subdued by (a) the
tangent at the crack tip and (b) the secant line passing through the crack tip
and $\Gamma(r)$. Figure
\ref{fig:thetaext} shows the values of the coordinate $\vartheta$ for the particular case of a
circular arc crack. 
 Lastly let $\gb_i(r), r\in[0,\rho]$ be the
right-handed orthonormal bases induced from the mapping $\Gamma$ such that
$\gb_1(r) = -\Gamma'(r) / |\Gamma'(r) |$, see Fig. \ref{fig:loccrds}. 

\begin{figure}[htbp] 
\centering 
\begin{tikzpicture}[x=2.5in,y=2.5in]
 \node at (0,0) { \includegraphics[trim=1.1in 1.in 0.75in
0.5in,clip,height=0.3\textheight]{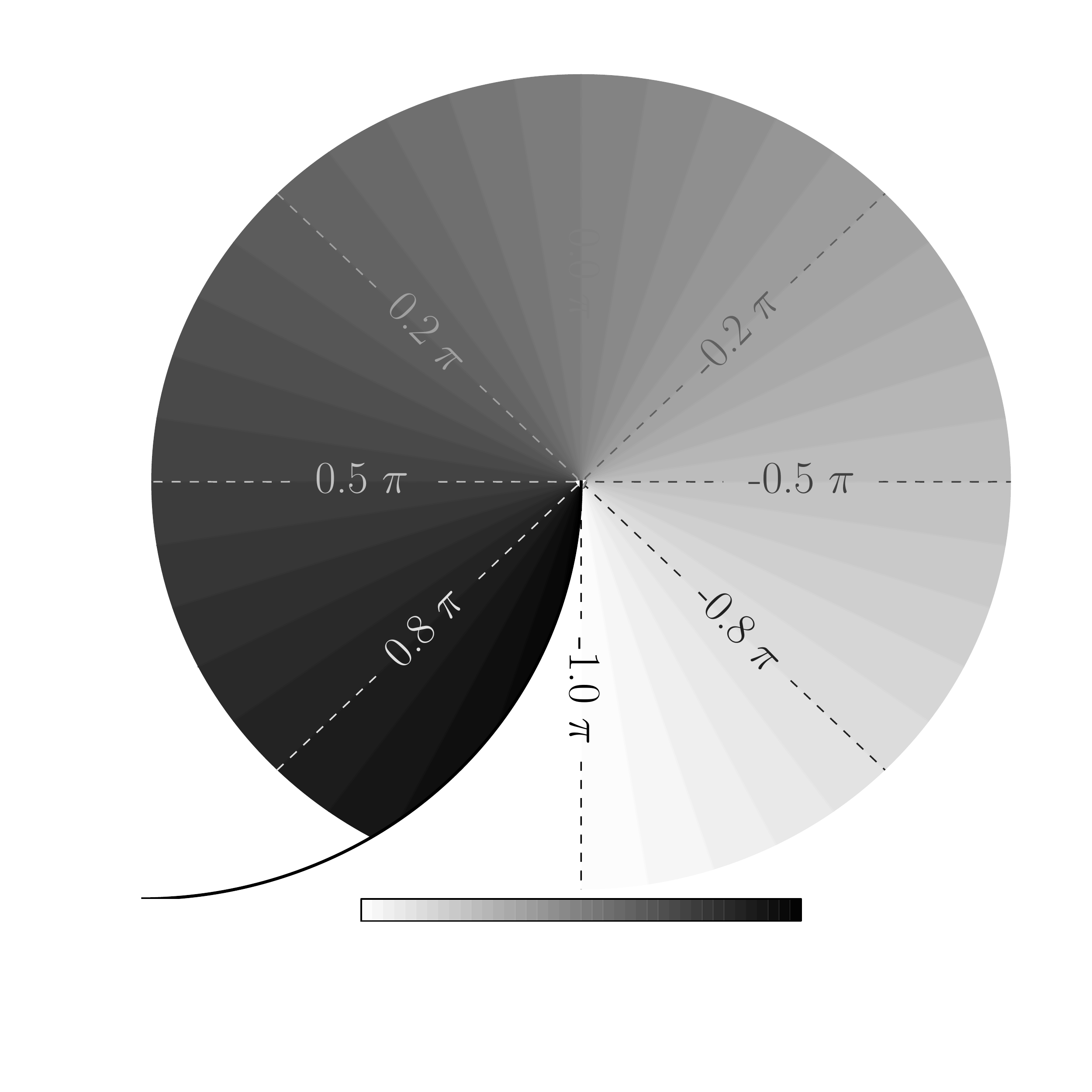} }; 
\node at (-0.35,-0.525) [right] { $-\pi + \max{ \zeta } \msp{0.25} \vartheta   \msp{0.25} \pi 
+ \max{\zeta}$};
\node at (-0.65,-0.425) [right] { Crack}; \end{tikzpicture} \captionof{figure}{
The $\vartheta$-coordinate with the branch cut along the crack.}
\label{fig:thetaext} 
\end{figure}
\sorted{
\todo{AL: it seems that the locations of the legends are off-center}
\todo{MC: done} }
Throughout this manuscript,  we assume that there exist unique real numbers $K_I, K_{II}$ such that 
\begin{equation}
\label{eq:regularity} 
\ub = K_I \ub^I + K_{II} \ub^{II} +
\ub_S, \quad \ub_S\in H^2\left(\Bc_\Csc; \Rbb^2\right),
\end{equation}
where $K_I$ and $K_{II}$ are \added[id=mc]{the} modes $I$ and $II$ stress intensity
factors, respectively, and $\ub^I, \ub^{II}\in H^1(\Bc_\Csc;
\Rbb^2)\setminus H^2(\Bc_\Csc; \Rbb^2)$ are the asymptotic
displacement solutions of these modes.  


The problem of evaluating the stress intensity factors is: Given \deleted[id=mc]{$\ub$} a solution \added[id=mc]{$\ub$} of Problem
\eqref{eq:elasticity}, compute $K_I$ and $K_{II}$ as defined in
(\ref{eq:regularity}). 

\begin{rmk}[Explicit evaluation of the behavior at the crack tip]
Whenever
$\betab=\grad\ub\in C^0(\Bc_\Csc; \Rbb^{2\times2})$, the stress
intensity factors can be alternatively defined as:
\begin{equation}
\label{eq:stress_intensity_factors}
\begin{aligned} 
K_I = \lim_{r\to 0} \sqrt{2\pi r} \cs(\betab )|_{\vartheta = 0} :
 \gb_2\otimes\gb_2,  \\
K_{II} = \lim_{r\to 0 } \sqrt{2\pi r} \cs(\betab )|_{\vartheta = 0} :
\gb_1\otimes\gb_2.  
\end{aligned} 
\end{equation}

The calculation of the stress intensity factors by
evaluating the limits in \eqref{eq:stress_intensity_factors} with a numerical
solution often leads to poor results. 
In fact,
few methods are capable of accurately resolving the singularity in the stress
field.
Therefore the predominant methods
to compute the stress intensity factors are based on the approximation
of the \emph{interaction integral}, a formulation that avoids pointwise evaluation of the stress field in the region with
the singularity.  We proceed to introduce \deleted[id=mc]{alternative formulas for} the
interaction integral in \S\ref{sxn:interint}.
\end{rmk}

\section{Interaction Integral} 
\label{sxn:interint}

\sorted{
\todo{YS: I think (a stress field) ``to yield traction-free crack
  faces'' should be changed to ``to be traction-free on the crack
  faces.''

AL: I like this, but cannot find where we do this.

}
}

In the sequel we first define the interaction integral functional
between any two admissible fields alongside a concise justification of
this definition (see \S\ref{subsxn:interaction_integral} ).  In
\S\ref{subsxn:prob-dep-int} we specialize it to the case in which one
of the fields is the solution to the elasticity problem of interest. This specialization
 results in a formulation that possesses the following properties:
(a) it does not involve second derivatives of the discrete
approximation to the exact displacement field; (b) it is a
problem-dependent functional\deleted[id=mc]{,} because it uses the prescribed tractions
and body forces; and (c) it can be further simplified, depending
on the problem, by carefully choosing the so-called material variation
and auxiliary fields, to be described later. We perform the
simplification in
\S\ref{subsxn:the_fields} and \S\ref{subsxn:interaction_integrals}, and
obtain three different formulas. In \S\ref{subsxn:concl_rem} we provide guidelines on which
specific formula for the interaction integral to choose for a given
application.


\subsection{Definition of the Interaction Integral Functional}
\label{subsxn:interaction_integral} 
The interaction integral involves two elasticity fields, one with
known stress intensity factors, and the other whose stress intensity
factors we are interested in evaluating.


Let us introduce the \emph{interaction energy momentum tensor} $\overline\Sigmab:\Rbb^{2\times 2}\times
\Rbb^{2\times 2} \to \Rbb^{2\times 2}$ defined as
\begin{equation*}
\overline\Sigmab\left(\betab^a,\betab^b\right) = \overline
w\left(\betab^a,\betab^b\right) \1- \betab^a \,^\top\cs\left(\betab^b\right) -
\betab^b\,^\top\cs(\betab^a),\\ 
\end{equation*}
where $\overline w : \Rbb^{2\times 2}\times \Rbb^{2\times 2} \to \Rbb $ is given as
\begin{equation}
\overline w\left(\betab^a,\betab^b\right) = \frac{1}{2} \left[ \cs(\betab^a) 
 :\betab^b +
\cs \left(\betab^b\right) :\betab^a \right].\label{eq:reciprocal_energy} 
\end{equation}
Assuming the same constitutive relation (\ref{eq:constitutiverelation}) for both fields, 
\eqref{eq:reciprocal_energy} simplifies to \newcommand{\testset}{\Mc}
\begin{equation*}
\overline
w\left(\betab^a,\betab^b\right) = \cs(\betab^a ): \betab^b =
\cs\left(\betab^b\right): \betab^a.  
\end{equation*} Additionally, let  the
set of \emph{material variations} be defined as 
\eq{
  \testset = \left\{\delta \gammab \in{C^1\left(\Bc_\Csc; \Rbb^2
      \right)} \, \middle| \delta \gammab = \0 \text{ in } \Bc_\Csc
    \setminus B_{\rho} (\xt ), \delta \gammab(\xt) = \gb_1(0)
		\right\}\replaced[id=mc]{.}{,}  \label{eq:setv} }\deleted[id=mc]{where $\nb(\xt)$ is normal to any of the two crack faces evaluated at the
crack tip.}Finally, let 
\[
\Bsc^b=\vectorspan\left\{\nabla
\ub^I,\nabla
\ub^{II}\right\} \oplus H^1\left(\Bc_\Csc; \Rbb^{2\times2}\right),
\]
and for any tensor field $\betab\in \Bsc^b$ define $K_I\replaced[id=mc]{[\betab]}{(\betab)}$ and
$K_{II}\replaced[id=mc]{[\betab]}{(\betab)}$ such that
\begin{equation}
  \label{eq:1}
  \betab =   K_I\replaced[id=mc]{[\betab]}{(\betab)} \nabla \ub^I+K_{II}\replaced[id=mc]{[\betab]}{(\betab)}\nabla \ub^{II} + \betab_S
\end{equation}
with $\betab_S\in H^1\left(\Bc_\Csc ; \Rbb^{2\times2}\right)$. In
particular, if $\betab=\nabla \ub$, for $\ub$ \added[id=mc]{being} a solution of Problem
\eqref{eq:elasticity}, then $K_I\replaced[id=mc]{[\betab]}{(\betab)}$ and $K_{II}\replaced[id=mc]{[\betab]}{(\betab)}$ are the
stress intensity factors of $\ub$. However, $K_I\replaced[id=mc]{[\betab]}{(\betab)}$ and
$K_{II}\replaced[id=mc]{[\betab]}{(\betab)}$ are still defined for any $\betab\in \Bsc^b$ that  is not the gradient of
a displacement field. For convenience, regardless of whether $\betab$
is or is not the gradient of a displacement field, we will refer to  $K_I\replaced[id=mc]{[\betab]}{(\betab)}$ and
$K_{II}\replaced[id=mc]{[\betab]}{(\betab)}$ as the stress intensity factors of $\betab$. 
\sorted{
\todo{YS: I have modified the definition of $\testset$ since we
  have already defined the tangent direction of the crack at the
  tip. Moreover, the previous definition did not distinguish the two
  possible senses of $\delta\gammab(\xt)$, but in fact we only allow
  unit advancement, not unit healing.

  AL: Very nice.
}
}

We define the {\it interaction integral functional} $\hat\Ic: \Bsc^b \times
\Bsc^b \times \testset \to \Rbb $
as 
\begin{equation} \label{eq:interinta}
\begin{aligned} \hat\Ic\left[ \betab^a,\betab^b,\delta \gammab \right]  =&
\int_{\Csc^\pm_\rho  }  \delta \gammab \cdot
\overline\Sigmab\left(\betab^a,\betab^b\right) \nb \;dS \\
-&  \int_{ B_\rho(\xt)\setminus \Csc } \left[ 
\overline\Sigmab\left(\betab^a,\betab^b\right):\grad\delta \gammab +
\dive\overline\Sigmab\left(\betab^a,\betab^b\right)\cdot \delta \gammab
\right]\; dV, \end{aligned} 
\end{equation}
where, for convenience, we let $\Csc^\pm_\rho $ denote $\Csc_\pm
\cap B_\rho(\xt)$. The value $\hat\Ic\left[
  \betab^a,\betab^b,\delta \gammab \right]$ is the
{\it interaction integral} between $\betab^a$ and $\betab^b$. 
The relation between the interaction integral and the stress intensity
factors of  $\betab^a,\betab^b \in
\Bsc^b$ is
\eq{ \hat\Ic\left[ \betab^a,\betab^b,
\delta \gammab\right] = \eta \left(\replaced[id=mc]{K_I[\betab^a] K_I\left[\betab^b\right] + K_
{II}[\betab^a] K_{II}\left[\betab^b\right]}{K_I(\betab^a) K_I\left(\betab^b\right) + K_
{II}(\betab^a) K_{II}\left(\betab^b\right) } \right)\label{eq:I_in_terms_of_Ks} }
for any $\delta \gammab \in \testset$,   where $\eta$ is a material constant defined as \begin{equation*} 
\eta = \left\{\begin{array}{ l l } \dfrac{\lambda +2 \mu }{2 \mu  (\lambda +\mu)}, &
\text{ for plane strain},\\ \\ \dfrac{2 (\lambda +\mu )}{\mu  (3 \lambda +2 \mu
)}, & \text{ for plane stress}.  
\end{array} \right.
\end{equation*}

It follows from (\ref{eq:I_in_terms_of_Ks}) that, if we are interested in finding $K_{I}\replaced[id=mc]{[\betab]}{(\betab)}$ (or
$K_ {II}\replaced[id=mc]{[\betab]}{(\betab)} $), we must generate an {\it auxiliary tensor field} $\betab\au_{I}$  (or
$\betab\au_{II} $) $\in \Bsc^b$ satisfying $K_I\replaced[id=mc]{[ \betab\au_{I}]}{( \betab\au_{I})} = 1$, $K_
{II}\replaced[id=mc]{[ \betab\au_{I}]}{( \betab\au_{I})} = 0 $ (or $K_I\replaced[id=mc]{[ \betab\au_{II}]}{( \betab\au_{II})}  = 0$, $K_{II}\replaced[id=mc]{[ \betab\au_{II}]}{( \betab\au_{II})} = 1 $). In this case, (\ref{eq:I_in_terms_of_Ks})  implies that
 \begin{equation*} K_{I,II}\replaced[id=mc]{[ \betab]}{( \betab)} = \frac{
\hat\Ic[ \betab , \betab\au_{I,II}, \delta \gammab ] } {\eta }. 
 \end{equation*}

Notice that the interaction integral can be regarded as a tool to {\it
  extract} the singular parts of fields $\betab^a,\betab^b$, as it
follows from (\ref{eq:interinta}). The regular part $\betab_S$ of
either field (\replaced[id=mc]{cf.}{c.f.} \eqref{eq:1}) does not contribute to the value of the interaction integral.

  \begin{justification}[Equation \eqref{eq:I_in_terms_of_Ks}] 
		Consider
    $\betab^a,\betab^b\in \Bsc^b$, and $r>0$. Notice that the two
    terms in the volume integral of (\ref{eq:interinta})  form
    an exact divergence. Applying the divergence theorem on $
    (B_\rho(\xt)\setminus\Csc)\setminus B_r(\xt)$ reveals that the
    integration in (\ref{eq:interinta}) over $
    (B_\rho(\xt)\setminus \Csc)\setminus B_r(\xt)$ and $\Csc^\pm_\rho \setminus
    B_r(\xt)$ add up to an integral over $\partial B_r(\xt)$. It then
    follows that
\begin{equation}
\begin{aligned} \hat\Ic\left[ \betab^a,\betab^b,\delta \gammab \right] = \lim_{r\to
0} \int_{\partial B_r(\xt)} \delta \gammab \cdot \overline
\Sigmab\left(\betab^a,\betab^b\right) \nb\;dS,
\end{aligned}\label{eq:int_integral_limit}
\end{equation} 
\added[id=mc]{with $\nb$ here is also used to denote the outward unit normal to $\partial B_{r}(\xt) $,} since the rest of the terms vanish as $r\to 0$. 

To proceed \replaced[id=mc]{in showing}{to prove} that \eqref{eq:int_integral_limit} implies \eqref{eq:I_in_terms_of_Ks}, we write $\betab^a=\betab_T^a+\betab_S^a$ and $\betab^b=\betab_T^b+\betab_S^b$, where $\betab_T^a, \betab_T^b\in \vectorspan\{\nabla \ub^I,\nabla \ub^{II}\}$ and $\betab_S^a, \betab_S^b\in H^1(\Bc_\Csc; \Rbb^{2\times2})]$. It is straightforward to show that
\[
\hat{\Ic}\left[ \betab_T^a,\betab_T^b,\delta \gammab \right] = \lim_{r\to
0} \int_{\partial B_r(\xt)} \delta \gammab \cdot \overline
\Sigmab\left(\betab_T^a,\betab_T^b\right) \nb\;dS = \eta \left( \replaced[id=mc]{K_I[\betab^a] K_I\left[\betab^b\right] + K_
{II}[\betab^a] K_{II}\left[\betab^b\right]}{K_I(\betab^a) K_I\left(\betab^b\right) + K_
{II}(\betab^a) K_{II}\left(\betab^b\right) }  \right).
\]
Therefore, it remains to show that $\hat{\Ic}[ \betab_S^a,\betab_T^b,\delta \gammab ]=\hat{\Ic}[ \betab_T^a,\betab_S^b,\delta \gammab ]=\hat{\Ic}[ \betab_S^a,\betab_S^b,\delta \gammab ]=0$. To this end, we first define
\[\hat{\Ic}_r\left[ \betab^a,\betab^b,\delta \gammab \right] := \int_{\partial B_r(\xt)} \delta \gammab \cdot \overline
\Sigmab\left(\betab^a,\betab^b\right) \nb\;dS.\]
Then we invoke the Cauchy-Schwarz inequality and the explicit expression of $\betab_T^b$ to obtain
\begin{equation}\label{eq:after_C_S}
\begin{aligned}
\left|\hat{\Ic}_r\left[ \betab_S^a,\betab_T^b,\delta \gammab \right]\right|&\le
C\|\betab_S^a\|_{L^2[\partial B_r(\xt)]} \left\|\betab_T^b\right\|_{L^2[\partial B_r(\xt)]} \le C\|\betab_S^a\|_{L^2[\partial B_r(\xt)]}, \\
\left|\hat{\Ic}_r\left[ \betab_S^a,\betab_S^b,\delta \gammab \right]\right|&\le
C\|\betab_S^a\|_{L^2[\partial B_r(\xt)]} \left\|\betab_S^b\right\|_{L^2[\partial B_r(\xt)]}, 
\end{aligned}	
\end{equation}
where $C>0$ is independent of $r$.

To continue, we need to invoke a trace inequality with a scaling \replaced[id=mc]{of}{ with} $r$ for any $f\in H^1[B_\rho(\xt)\setminus\Csc]$ and $r\in(0,\rho)$,
\begin{equation}\label{trace_inequality}
	 \|f\|_{L^2[\partial B_r(\xt)]}\le Cr^{1/2}\|f\|_{H^1[B_\rho(\xt)\setminus\Csc]},
\end{equation}
where $C>0$ is independent of $f$ and $r$\footnote{  To prove \eqref{trace_inequality}, we first write
 \begin{equation*}
 	\overline{f}:=\frac{1}{\pi\rho^2} \int_{B_\rho(\xt)} f\;d\Omega,\quad\hat{f}:=f-\overline{f}.
 \end{equation*}
 Then with a form of Poincar\'e's inequality and a scaling argument,
 \begin{equation*}
 	\left\|\hat{f}\right\|_{L^2[\partial B_r(\xt)]} \le Cr^{1/2} |f|_{H^1[B_r(\xt)\setminus\Csc]} \le Cr^{1/2} |f|_{H^1[B_\rho(\xt)\setminus\Csc]}.
 \end{equation*}
 On the other hand, since $\|\overline{f}\|_{L^2[\partial B_r(\xt)]}=(\overline{f}^2 2\pi r)^{1/2}$ and $\|\overline{f}\|_{L^2[B_\rho(\xt)]}=(\overline{f}^2 \pi \rho^2)^{1/2}$, we have
 \begin{equation*}
 	\left\|\overline{f}\right\|_{L^2[\partial B_r(\xt)]} = Cr^{1/2}\left\|\overline{f}\right\|_{L^2[B_\rho(\xt)]} \le 
 	Cr^{1/2}\left\|f\right\|_{L^2[B_\rho(\xt)]}.
 \end{equation*}
 Adding these two inequalities yields \eqref{trace_inequality}.}.

With \eqref{trace_inequality}, we then proceed to simplify \eqref{eq:after_C_S}:
\begin{equation*}
\begin{aligned}
\left|\hat{\Ic}_r\left[ \betab_S^a,\betab_T^b,\delta \gammab \right]\right|&\le
Cr^{1/2} \|\betab_S^a\|_{H^1[\partial B_\rho(\xt)\setminus\Csc]}, \\
\left|\hat{\Ic}_r\left[ \betab_S^a,\betab_S^b,\delta \gammab \right]\right|&\le
Cr\|\betab_S^a\|_{H^1[\partial B_\rho(\xt)\setminus\Csc]} \left\|\betab_S^b\right\|_{H^1[\partial B_\rho(\xt)\setminus\Csc]}.
\end{aligned}	
\end{equation*}

Thus, as $r\rightarrow0$, both $\hat{\Ic}_r\left[ \betab_S^a,\betab_T^b,\delta \gammab \right]$ and $\hat{\Ic}_r\left[ \betab_S^a,\betab_T^b,\delta \gammab \right]$ tend to zero. From symmetry in the first two slots of $\hat{\Ic}_r$, we also have $\lim_{r\rightarrow0}\hat{\Ic}[ \betab_T^a,\betab_S^b,\delta \gammab ]=0$.


\end{justification} 

\begin{rmk}[Relation to the energy release rate] 
The interaction integral functional
is directly related to the  energy release rate
 $\Gc:\Bsc^b \times \testset \to \Rbb$ \cite{griffith1920}, which can be
defined as 
\eq{ \Gc[ \betab,\delta \gammab] = \lim_{r\to 0 } \int_{\partial
B_r(\xt ) } \delta \gammab \cdot \Sigmab(\betab ) \nb\; dS,
\label{eq:energy_release_rate} 
} 
where $\Sigmab:\Rbb^{2 \times 2}\to\Rbb^{2 \times 2}$ is Eshelby's energy
momentum tensor \cite{eshelby1975} \added[id=mc]{and $\nb$ is used to denote the outward unit normal to $\partial B_{r}(\xt) $}. For linear elastic materials,
Eshelby's energy momentum tensor takes the form
\begin{equation*}
\Sigmab( \betab ) =
 \frac{1}{2} \cs(\betab):\betab \;\1 - \betab^\top \cs(\betab ) .
\end{equation*}
The above is related to \added[id=mc]{the} interaction energy momentum tensor by the following relation
\[
		  \overline\Sigmab\left(\betab^a,\betab^b\right) = \Sigmab\left(\betab^a+\betab^b\right)-\Sigmab(\betab^a)-\Sigmab\left(\betab^b\right) \quad \Rightarrow \quad \Sigmab\left(\betab^b\right) = \frac12 \overline \Sigmab\left(\betab^b,\betab^b\right).
\]
Comparing \eqref{eq:energy_release_rate} and \eqref{eq:int_integral_limit} and
exploiting the linearity of the constitutive relation, we have
the following relation between the interaction integral functional and the energy release
rate
\begin{equation} 
\label{eq:2}
\hat\Ic\left[\betab^a,\betab^b,\delta \gammab \right] =
\Gc\left[ \betab^a+\betab^b , \delta \gammab \right] - \Gc[ \betab^a, \delta
\gammab ] - \Gc\left[ \betab^b , \delta \gammab \right].  
\end{equation} 
After
 replacing with (\ref{eq:1}) and evaluating, the limit in
 \eqref{eq:energy_release_rate}  gives the widely known relation
\begin{equation*} 
\Gc[ \betab, \delta \gammab ] = \eta \left( \replaced[id=mc]{ K_I[\betab]^2 + K_{II}[\betab]^2}{ K_{I}(\betab)^2 +
K_{II}(\betab)^2 }\right).
\end{equation*} 
We can alternatively recover \eqref{eq:I_in_terms_of_Ks} by replacing
this relation into \eqref{eq:2}.  Thus, \eqref{eq:I_in_terms_of_Ks} can be
justified by the direct evaluation of the limit in
\eqref{eq:int_integral_limit} or by its relation to the energy
release rate.  It is worth noting that while $\Gc$ is a non-linear
functional in $\betab$, $\hat\Ic$ is linear in both $\betab^a$ and
$\betab^b$.
\end{rmk} 

\begin{rmk}[Constraints on $\Mc$]
In \eqref{eq:energy_release_rate}, $\delta
\gammab$ is understood as a
variation of material points that represents unit crack advancement. Thus, in
order to compute the energy release rate, we must enforce $\delta
\gammab=\gb_1(0)$ at $\xt$. This constraint is the justification  behind
the definition of $\testset$ in \eqref{eq:setv}. 
\end{rmk}

\subsection{Problem-dependent Interaction Integral Functional}
\label{subsxn:prob-dep-int}

We present here a functional $\Ic $ that takes the same value as $\hat\Ic$ when $\betab^a$ coincides with the solution of Problem \eqref{eq:elasticity}. Namely, if $\betab^a=\grad\ub$ where $\ub$ satisfies \eqref{eq:equilibrium} and \eqref{eq:traction},
then $\Ic[\betab^a,\betab^b,\delta\gammab] =
\hat\Ic[\betab^a,\betab^b,\delta\gammab]$ for any $\betab^b$. Therefore, in this case
$\Ic[\betab^a,\betab^b,\delta\gammab]$ is the interaction integral
between $\betab^a$ and $\betab^b$.

The motivation behind introducing $\Ic$ is to formulate a functional
defined over gradients of displacement fields that belong to classical
finite elements spaces, namely, a functional for which no second
derivatives of a numerical solution are needed.  This is possible
because, in contrast to $\hat \Ic$, $\Ic$ does not involve derivatives
of $\betab^a$.


Expanding the divergence in \eqref{eq:interinta}  and substituting with \eqref{eq:equilibrium} and \eqref{eq:traction} for $\betab^a=\grad\ub$  yields
\[
\begin{aligned}
\hat\Ic\left[\betab^a,\betab^b,\delta\gammab\right] = &\int_{\Csc^\pm_\rho } \delta \gammab \cdot \overline\taub \left(\betab^a , \betab^b \right)  \, dS \\
 - & \int_{ B_\rho(\xt) \setminus \Csc } \left[\overline\Sigmab \left(\betab^a , \betab^b \right) :\grad\delta\gammab  +  \delta \gammab \cdot \overline \lambdab\left(\betab^a , \betab^b \right) \right]\, dV,
\end{aligned}
\]
where 
\begin{equation}
\overline 
\taub\left(\betab^a , \betab^b \right) = \overline w\left(\betab^a , \betab^b \right)
\nb -
\betab^a\,^\top \cs\left( \betab^b \right)\nb  -  \betab^b\,^\top \bar
\tb,\label{eq:taub}
\end{equation}
and 
\begin{equation}
\overline \lambdab\left(\betab^a , \betab^b \right) = \betab^a: \grad\cs \left(\betab^b \right) -
 \cs\left(\betab^a\right):(\grad \betab^b\,)^\top - \betab^a\,^\top \dive
\cs\left( \betab^b \right) + \betab^b\,^\top \bb \,. \label{eq:lambda}
\end{equation}
{
\begin{rmk}[Indicial expression of relevant quantities]
For ease of implementation, we provide here the indicial representation of $\overline \Sigmab,\,\overline \taub, $ and $\overline \lambdab$ (making use of Einstein's repeated indeces convention), namely 
\[\begin{aligned}
\overline \Sigma_{ij}\left(\betab^a, \betab^b\right)  &= \sigma^a_{kl} \beta^b_{kl}\, \delta_{ij} - \beta^a_{ki} \sigma^b_{ kj} -\beta^b_{ki} \sigma^a_{ kj}, \\
\overline \tau_i\left( \betab^a, \betab^b\right) &=  \overline w n_i - \beta^a_{ji} \sigma^b_{jk} n_k - \beta^b_{ji} \overline{t}_j, \\
\overline \lambda_i\left( \betab^a, \betab^b\right)  &= \beta^a_{mn}\sigma^b_{mn,i} - \sigma^a_{kj} \beta^b_{ki,j}  - \beta^a_{ki} \sigma^b_{kj,j} + \beta^b_{ki}b_{k},
\end{aligned}
\]
where $\sigma^{a,b}_{ij}$ are understood as $\sigma(\betab^{a,b})_{ij}$.
\end{rmk}
}

Notice that for each pair $\left(\betab^a , \betab^b \right)$, $\overline \taub\left(\betab^a , \betab^b \right)$
and $\overline \lambdab\left(\betab^a , \betab^b \right)$ are functions over $\Csc^\pm_\rho$ and
$B_\rho(\xt) \setminus \Csc$, respectively. 

\sorted{
\todo{AL: Did not check that this last formula is right. Yongxing, did
  you? Also, I think that the intrinsic notation could give rise to
  confusion. We should write this in indicial notation as well. The
  way indices of gradients are ordered differs across books, so it is
  not clear what indices are contracted in the first two terms. In
  fact, now I am not sure this expression is correct YS: Yes. It is
  correct.
AL: Thanks.
}

\todo{AL: As I said above, these same formulas need to be reproduced
  in indicial notation in an appendix, and cited from here}
\todo{MC: I added the Appendix and cited it above. Yongxing could you double check that I did't make any mistake. Thanks}
}
\todo{AL: The set $\Bsc^a$ does not include the exact solution. I
  think I need to add the span of the two singular behaviors. What do
  you guys think? I added the part that was missing. YS: I agree. }
To reflect the lower regularity needed for $\betab^a$,  we
first define a finite partition of $\Bc_\Csc$ as a  set
\replaced[id=mc]{$\{T_1,\ldots,T_N\}$}{$\{K_1,\ldots,K_N\}$} for some $N\in \mathbb N$ such that
\replaced[id=mc]{$T_i$}{$K_i$} is open for any $i$, \replaced[id=mc]{$T_i\cap T_j=\emptyset$}{$K_i\cap K_j=\emptyset$} for any \replaced[id=mc]{$i\neq j$}{$i,j$}, and
$\replaced[id=mc]{\bigcup_i \overline{T_i}}{\cup_i \overline{K_i}} = \overline {\Bc_\Csc}$. An example of such partition is a finite element mesh for $\Bc_\Csc$. Then, we can set 
\begin{equation*}
\begin{aligned}
\Bsc^a=& \left\{\betab \in  L^2\left(\Bc_\Csc, \Rbb^{2\times2}\right)
  \Big|\ \betab|_{ \replaced[id=mc]{T_i}{K_i}} \in H^1\left(\replaced[id=mc]{T_i}{K_i};\mathbb R^{2\times
  2}\right)  \text{ for any } \replaced[id=mc]{T_i}{K_i} \text{ in some finite partition of }
  \Bc_\Csc 
\right\} \\ &
\oplus \vectorspan\left\{\nabla
\ub^I,\nabla
\ub^{II}\right\} 
\end{aligned}
\end{equation*}
and define $\Ic : \Bsc^a\times\Bsc^b\times \testset\to \Rbb$ as 
\begin{equation}
\begin{aligned}
\Ic\left[\betab^a,\betab^b,\delta\gammab\right] = &\int_{\Csc^\pm_\rho } \overbrace{ \delta \gammab \cdot \overline\taub \left(\betab^a , \betab^b \right) }^{\Ic_1} \, dS \\
 - & \int_{ B_\rho(\xt) \setminus \Csc } \big[\, \underbrace{ \overline\Sigmab \left(\betab^a , \betab^b \right) :\grad\delta\gammab }_{\Ic_2} +  \underbrace{\delta \gammab \cdot \overline \lambdab\left(\betab^a , \betab^b \right)}_{\Ic_3}\, \big]\, dV.
\label{eq:3}
\end{aligned}
\end{equation}
In this way, $\betab^a$ can have discontinuities across a finite
number of interfaces, as in a finite element solution\added[id=mc]{s}, and still have
well-defined values at the crack faces (which a function in $L^2$
may not have). 
Note that $\Ic$ is linear in $\betab^b$ but affine in $\betab^a$, and
therefore, not symmetric with respect to them. The way $\Ic$ will be
used is by setting $\betab^a$ to be either the exact solution or a
numerical approximation to it, and $\betab^b$ to be an auxiliary field.

\subsection{The Fields} \label{subsxn:the_fields} We next proceed to construct
the \testfunctionname and auxiliary fields that will enable the extraction of the
stress intensity factors for curvilinear crack geometries.

\subsubsection{Material variation fields.} 
The
objective of this section is to construct vector fields $\delta \gammab$ that
belong to the set of  material variations $\testset$, see \eqref{eq:setv}. We  provide
two constructs,  but any $\delta \gammab \in \testset$ could be
used.  

We start from a general form $\delta \gammab(\xt + r{\boldsymbol e}_r) = q(r) \mathsf{t}(r)$ where $q:
\Rbb_0^+ \to \Rbb_0^+$ and $\mathsf{t}:\Rbb_0^+ \to \Rbb^2$. The function $q(r)$
represents the magnitude of the \testfunctionname field and is constructed to have support
within $B_{\rho}(\xt)$.  The function $\mathsf{t}(r)$ embodies the direction of
the \testfunctionname field and is taken to satisfy $|\mathsf{t}(r)|=1$, $\forall
r\in\Rbb_0^+$.

The magnitude and direction of the two \testfunctionname fields that we propose only
depend on $r$. We will thus abuse notation writing $\delta \gammab(r) $ in place
of $\delta \gammab(\xt + r{\boldsymbol e}_r)$.  

The scalar function $q(r) \in C^2(\Rbb_0^+)$ is defined  as
\begin{equation}
 q(r) = \begin{cases} 1, & \text{ if } r \le \rho_I, \\ 
f(r), & \text{ if }  \rho_I < r < \rho,\\ 0, & \text{ otherwise},
 \end{cases}
 \label{eq:cutoff}
\end{equation}
 with $f(r)$ being a fifth order polynomial and $\rho_I = \rho/4$. Note that
 to construct higher-order methods the regularity of $q(r)$, and  thus
 the polynomial order of $f(r)$, will have to be
 suitably adapted. 

In the sequel we list the two \testfunctionname fields:

\begin{enumerate}[(1)]
\item \emph{Unidirectional \testfunctionname
fields.} 
The first field is designed
to be constant within a distance $\rho_I$ from the crack tip. The field is then
constructed as 
\begin{equation} \delta
\gammab^\text{UNI}(r)= q(r) \gb_1(0). 
\label{eq:dguni} 
\end{equation}
This field satisfies 
\begin{equation}\label{eq:NoGradInside} 
\grad \delta\gammab^\text{UNI} = \0, \text{ for } r<\rho_I.  
\end{equation} 
Figure \ref{fig:deltaxconstant} shows its stream traces \added[id=mc]{alongside a circular arc crack}.

\item \emph{Tangential \testfunctionname fields.}
The second field is designed to be
tangential to the crack and is given by
\begin{equation}  \delta
\gammab^\text{TAN}(r) = q(r) \gb_1(r). 
\label{eq:dgtang} 
\end{equation}
This field satisfies 
\begin{equation}\label{eq:TanTestFunction}
\delta\gammab^\text{TAN} \cdot \nb = 0 \text{ on } \Csc^\pm_\rho.
\end{equation}
The stream traces of $\delta \gammab^\text{TAN}$\added[id=mc]{, for the particular case of a circular arc crack,} are shown in
Figure \ref{fig:deltaxtangent}.  
\end{enumerate}

\begin{figure}[htbp] 
\centering 
\subfloat[Constant within $|\x - \xt| \leq \rho_I$]{ 
\begin{tikzpicture}[x=2.5in,y=2.5in] \node at (0,0) {
\includegraphics[trim=1.5in 1.in 0.75in
0.5in,clip,height=0.3\textheight]{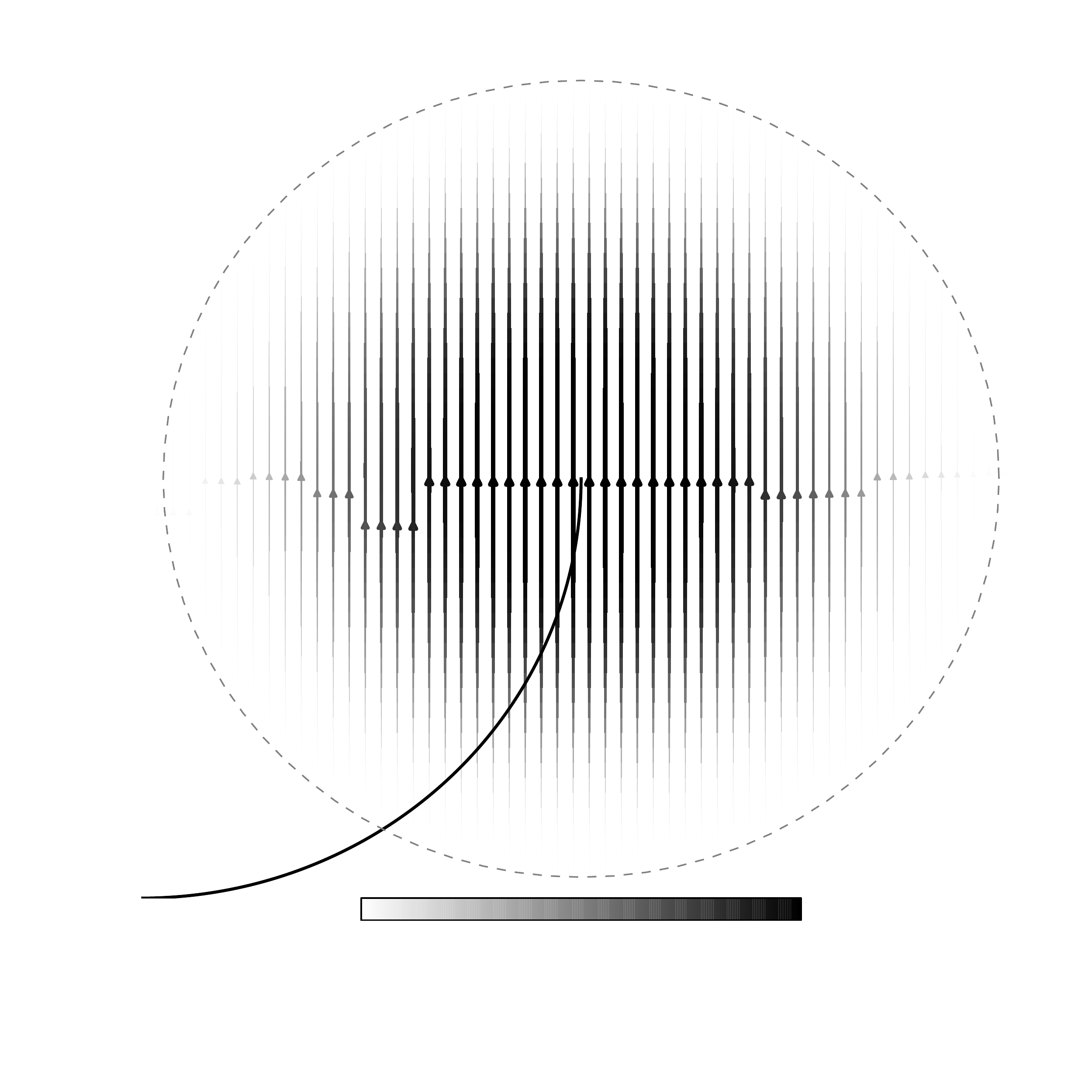} }; 
\node at (-0.29,-0.5) [right] { $0 \msp{0.49} | \delta\gammab|\msp{0.49} 1$}; 
\node at (-0.65,-0.425) [right] { Crack};
 \end{tikzpicture} \label{fig:deltaxconstant} 
}
\subfloat[Tangent to $\Csc$]{ 
\begin{tikzpicture}[x=2.5in,y=2.5in]
 \node at (0,0) { \includegraphics[trim=1.5in 1.in 0.75in
0.5in,clip,height=0.3\textheight]{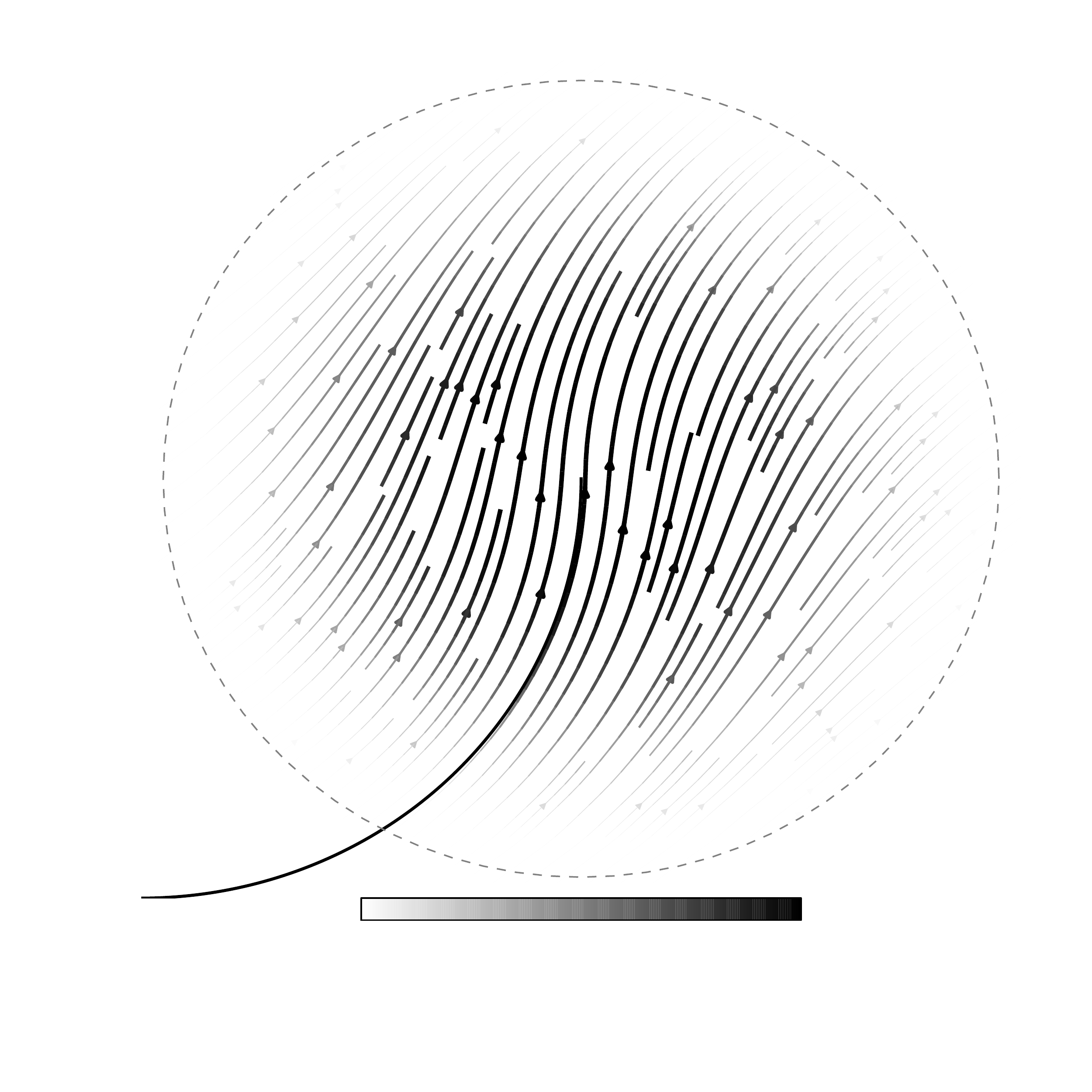} }; 
\node at (-0.29,-0.5) [right] { $0 \msp{0.49} | \delta\gammab|\msp{0.49} 1$};
 \node at (-0.65,-0.425) [right] { Crack}; 
\end{tikzpicture} 
\label{fig:deltaxtangent} 
}
\caption{Streamtraces for the \testfunctionname fields: (a)
$\delta \gammab^\text{UNI}$, (b) $\delta \gammab^\text{TAN}$.} 
\label{fig:deltax}
\end{figure} 

\begin{rmk}[Regularity of $\delta \gammab$] Note that since $\Gamma \in C^2(\Rbb_0^+)$ and $q \in C^2(\Rbb_0^+)$, both $\delta
\gammab^\text{UNI}$ and $\delta \gammab^\text{TAN}$ satisfy
the continuity requirement of $\testset$, namely, both are in $C^1(\Bc_{\Csc};\Rbb^2)$.
\end{rmk}

\subsubsection{Auxiliary fields.} 
\label{sec:auxiliary-fields}
As discussed
in \S \ref{subsxn:interaction_integral}, the objective is to construct tensor
fields
\begin{subequations}\label{eq:aux_field_requirements}{
\eq{
\betab\au_{I,II} \in \Bsc^b \label{eq:aux_field_requirement_1} 
}}
such that
 \begin{equation} \label{eq:sif_requirement} \left.  \begin{aligned} 
&K_{I}\replaced[id=mc]{[\betab\au_{I}]}{( \betab\au_{I} )} = 1, & K_{II}\replaced[id=mc]{[\betab\au_{I}]}{( \betab\au_{I} )} = 0, \\ 
&K_{I}\replaced[id=mc]{[\betab\au_{II}]}{( \betab\au_{II} )} = 0, & K_{II}\replaced[id=mc]{[\betab\au_{II}]}{( \betab\au_{II} )} = 1. \end{aligned} 
\right.  
\end{equation}

For a crack that is straight near the tip, namely,   $\Csc\cap B_\rho(\xt)$ is straight, a natural
choice is the strain fields of the solutions to pure modes $I$ and $II$ loading 
\citet{williams1952}. In fact, these solutions, appropriately scaled, satisfy
 \eqref{eq:sif_requirement} and the regularity requirement
\eqref{eq:aux_field_requirement_1}. Furthermore, the stress field
$\cs(\betab_{I,II}\au )$ is divergence free  and the fields $\betab_{I,II}\au $
are compatible, i.e., 
\begin{align} 
\dive \cs(\betab_{I,II}\au) &= \0
&&\text{in } B_\rho(\xt)\setminus \Csc, \label{eq:aux_field_requirement_2}\\
\exists \Phib \colon B_\rho(\xt)\setminus \Csc\to\Rbb^2 \text{ such that }
\betab_{I,II}\au &= \grad \Phib&&\text{in } B_\rho(\xt)\setminus \Csc,
\label{eq:aux_field_requirement_3} 
\end{align} 
as they are indeed derived from
gradients of vector fields.  Additionally the stress field \replaced[id=mc]{is}{  yields} traction-free \added[id=mc]{on the}
crack faces: \eq{ \cs(\betab_{I,II}\au) \nb =\0 \quad \text{on } \Csc_\rho^\pm.
\label{eq:aux_field_requirement_4} }	\end{subequations} These features allow
for significant simplifications of the interaction integral functional
in \eqref{eq:3}.

For curvilinear cracks, however, analytically obtaining auxiliary fields with the same
features  is not generally possible, since	a field that
satisfies all conditions (\ref{eq:aux_field_requirements})
is the solution of Problem \eqref{eq:elasticity} in the
neighborhood of $\xt$ for the
given curvilinear crack geometry $\Csc$. Instead, we will construct auxiliary fields that, although sufficiently regular and
satisfying \eqref{eq:sif_requirement}, may violate
\eqref{eq:aux_field_requirement_2}, \eqref{eq:aux_field_requirement_3}, or
\eqref{eq:aux_field_requirement_4}.  Needless to say that doing so  hinders
the simplification of the interaction integral functional, as  discussed in \S \ref
{subsxn:interaction_integrals}.

In the following we discuss two constructs of the auxiliary fields that satisfy
 \eqref{eq:aux_field_requirement_1} and
\eqref{eq:sif_requirement}: (1) \deleted[id=mc]{first} we present a compatible $\betab_{I,II}\au$
with divergence-free stress field $\cs(\betab_{I,II}\au)$, but for
which $\cs(\betab_{I,II}\au)$ \replaced[id=mc]{is not }{ does  not  yield }traction-free \added[id=mc]{on the} crack faces, and (2) then we introduce a 
variant of 
$\betab_{I,II}\au$ that is incompatible and whose stress field is
not divergence free, but its stress field \replaced[id=mc]{is }{ does
yield} traction-free  \added[id=mc]{on the} crack faces.

\begin{enumerate}[(1)]
 \item \emph{Divergence-free and compatible
(DFC) fields.} 
We first construct an
auxiliary field which satisfies conditions \eqref{eq:aux_field_requirement_1},
\eqref{eq:sif_requirement}, \eqref{eq:aux_field_requirement_2}, and
\eqref{eq:aux_field_requirement_3}.

To this extent consider the displacement fields obtained for a straight crack in
pure mode $\modes$ loading given by $\ub^\modes =\sum_{i,j} u^\modes_{ij} \gb_i(0)\otimes
\gb_j(0) $, where, for
completeness, the components $u^\modes_{ij}$ are recapitulated  in \S \ref{sxn:app1}. The auxiliary fields are then taken as
\begin{equation} \betab_{I,II}\au(r,\vartheta) :=
\betab_{I,II}^\text{DFC}(r,\vartheta) = \grad \ub^\modes(r,\vartheta),
\label{eq:betabdcf}
\end{equation} where for each $r$ the domain of definition of $\vartheta$ is $[-\pi
-\zeta(r) , \pi -\zeta(r)] $ as introduced in \S
\ref{subsxn:fracture_mechanics_problem}, rather than $[-\pi,\pi]$.



\item \emph{Traction-free (TF) fields.} 
We now construct auxiliary fields such that \eqref{eq:aux_field_requirement_1},
\eqref{eq:sif_requirement}, and \eqref{eq:aux_field_requirement_4} are
satisfied.

Consider the mapping  $\varphi\colon D_\rho \to [-\pi, \pi ] $ of the angular component of the polar
coordinate system introduced in \S \ref{subsxn:fracture_mechanics_problem}.
This mapping is designed to take a value of $\pm\pi$ on the crack faces and can
be constructed as 
\begin{equation*} \varphi( r, \vartheta ) = \vartheta +
\zeta( r  ).
\end{equation*} 
Values of $\varphi$ are plotted for a circular arc
crack geometry in Figure \ref{fig:gamma}.  

\begin{figure}[htbp] 
\centering 
\begin{tikzpicture}[x=2.5in,y=2.5in] 
\node at (0,0) { \includegraphics[trim=1.1in 1.in 0.75in
0.5in,clip,height=0.3\textheight]{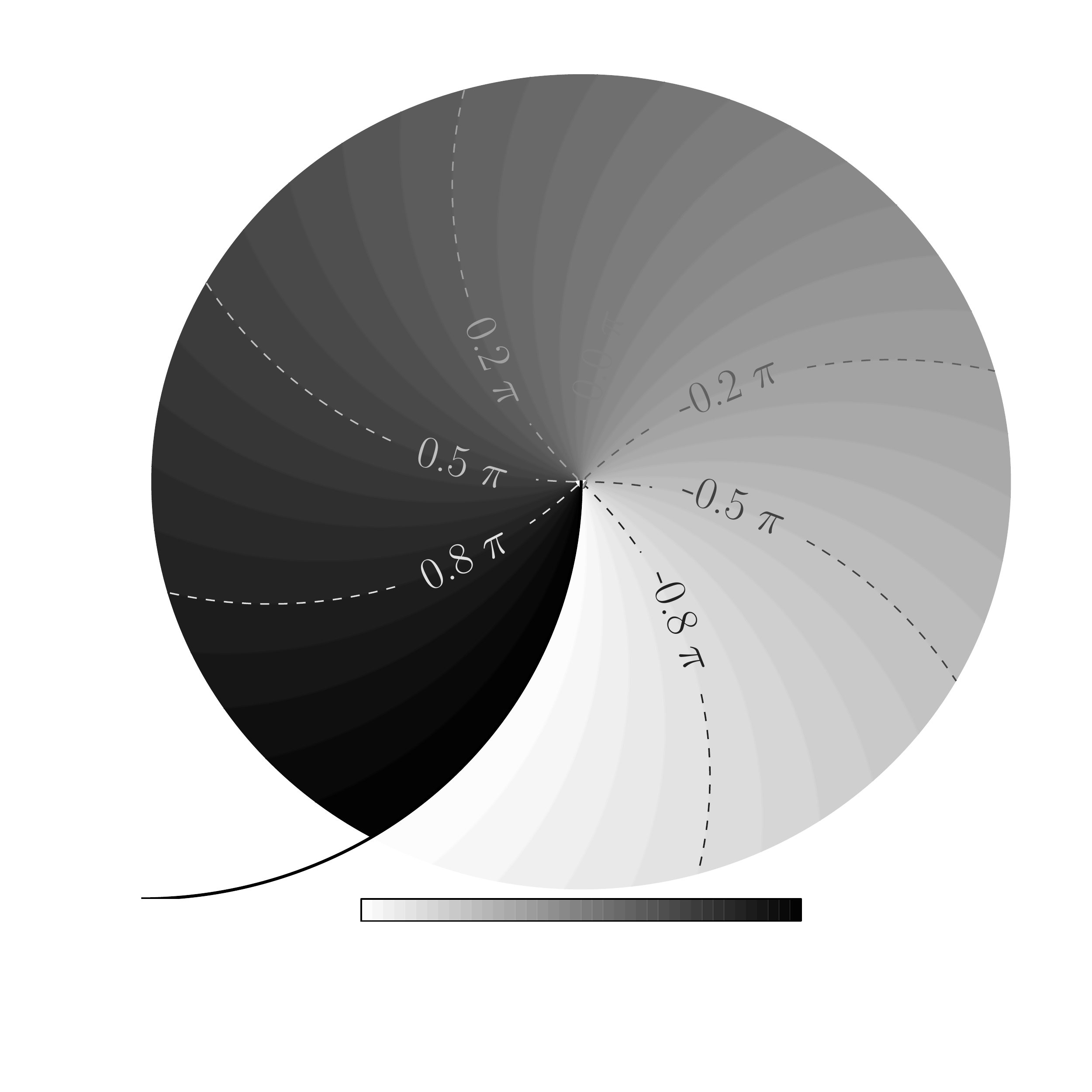} }; 
\node at (-0.315,-0.525)[right] { $-\pi\msp{0.375}\varphi
(r,\vartheta)\msp{0.4375} \pi$};
 \node at
(-0.65,-0.425) [right] { Crack};
 \end{tikzpicture} \caption{ The mapping
$\varphi(r,\vartheta) $ } \label{fig:gamma} 
\end{figure}

We then construct $\betab_\modes\au$ as 
\begin{equation}
\betab_\modes\au(r,\vartheta) :=  \betab_{I,II}^\text{TF}(r,\vartheta)
=\sum_{i,j} \bigg[ \grad \ub^\modes\bigg|_{( r, \varphi( r,\vartheta) )} : \gb_i(0)
\otimes \gb_j(0) \bigg] \gb_i(r) \otimes \gb_j(r).  
\label{eq:betabtf}
\end{equation} 
\added[id=al]{This auxiliary field is well defined for $r\in [0,\rho)$, where
$\zeta(r)$ is also well defined. Its values for $r>\rho$ do not
participate in the interaction integral, because of the support of
$\delta \gamma$, and hence are immaterial. Below we show that
$\betab_{I,II}^\text{TF}\in \vectorspan\left\{\nabla
\ub^I,\nabla
\ub^{II}\right\} \oplus  H^1(B_\rho(\x_t);\mathbb
R^{2\times 2})$, and hence that it can be extended to a function
$\betab_{I,II}^\text{TF}\in \vectorspan\left\{\nabla
\ub^I,\nabla
\ub^{II}\right\} \oplus H^1(\Bc_\Csc; \Rbb^{2\times2})= \Bsc^b$.}

The inspiration behind this construct is to transport $\grad
\ub^\modes$ from the straight crack faces, on which $\cs(\grad
\ub^\modes)$ is traction-free,  to the faces of the curvilinear crack, 
rotating $\grad
\ub^\modes$\added[id=mc]{,} and hence  $\cs(\grad
\ub^\modes)$\added[id=mc]{,} precisely by the angle between $\gb_1(r)$ and $\gb_1(0)$.  
This is generally  a incompatible field with
non-divergence-free stresses but traction-free crack faces.

\begin{justification}[{Traction-free} property]
We begin by computing the stresses from the constitutive relation
(\ref{eq:constitutiverelation}) on both sides of
(\ref{eq:betabtf}). Let then $\cs^\modes: \Rbb^+ \times (-\pi, \pi)\to
\Rbb^{2 \times 2}$ denote  $\cs(\grad\ub^\modes)$, which are precisely
the stress fields  of a straight crack ( see \S \ref{sxn:app1})
parallel to  the
local crack tip basis vector $\gb_1(0)$. These stress fields are
traction free along these straight faces, so $\cs^\modes(r,\pm
\pi)\gb_2(0)={\bf 0}$.  Then, on $\Csc_\rho^\pm$, we have
\begin{equation}\label{eq:TracFreeAuxField} 
\begin{aligned} 
\cs\left( \betab_{I,II}^\text{TF} \right)\nb&= \sum_{i,j} 
\bigg[ \cs^\modes ( r, \varphi( r,\vartheta) ) : \gb_i(0) \otimes \gb_j(0) \bigg] 
\gb_i(r) \otimes \gb_j(r)\nb
\\ &= \mp \sum_{i,j} \bigg[
\gb_i(0)\cdot \cs^\modes ( r, \pm\pi) 
 \gb_j(0)
\bigg] \gb_i(r) \delta_{j2} = {\bf 0},
\end{aligned} 
\end{equation} 
where we used that $\varphi = \pm \pi$ on $\Csc_\rho^\pm$.
\end{justification}

\todo{Guys, do we want the Justifications to be all in
  italics? I personally do not like it. YS: The reason I proposed using italics for them was to clearly separate them with the rest of the texts, for readers more interested in the method than the analysis. MC: I liked the italics as YS mentioned.}

\begin{justification}[Regularity of $\betab^\text{TF}$] It is not \emph{a
  priori} apparent that $\betab^\text{TF}\in \Bsc^b$, but it does. 
\deleted[id=al]{In fact, as we will see, $\betab^\text{TF}$ diverges at most as fast as $Cr^{-1/2}$ as $r\rightarrow0$, with $C$ independent of $r$.}
	To prove $\betab^\text{TF}\in \Bsc^b$
first note that $\betab_{I,II}^\text{DFC}\in\vectorspan\{\nabla
\ub^I,\nabla
\ub^{II}\}$. It is then enough to show that
$\betab_S:=\betab_{I,II}^\text{TF} - \betab_{I,II}^\text{DFC}\in
H^1(B_\rho(\x_t); \Rbb^{2\times2})$, and hence that it can be extended
to a function $\betab_S\in H^1(\Bc_\Csc; \Rbb^{2\times2})$. 
To this end, we write
\begin{equation}\label{betaS}
\begin{aligned}
\betab_S
&= \sum_{i,j} \left[ \grad \ub^\modes( r, \vartheta+\zeta(r) ) : \gb_i(0)
\otimes \gb_j(0)
- \grad \ub^\modes(r,\vartheta): \gb_i(r)
\otimes \gb_j(r)
 \right] \gb_i(r) \otimes \gb_j(r),
\end{aligned}
\end{equation}
where $\zeta(r)$ for $r>0$ satisfies that
{
\[
\cos\zeta(r) = s(r):=-\frac{\Gamma'(0)}{|\Gamma'(0)|} \cdot \frac{\Gamma(0)-\Gamma(r)}{|\Gamma(0)-\Gamma(r)|},\]
and $\zeta(0) = 0 = \lim_{r\to 0^+} \zeta(r)$. 
Hence,
\[
\begin{aligned}
\zeta'(r) &= \pm \frac{d}{dr} \arccos s(r) = 
\mp \frac{s'(r)}{\sqrt{1-s(r)^2}} \\
&=\mp \dfrac{1}{|\Gamma'(0)|}\left[
\dfrac{\Gamma'(0)\cdot\Gamma'(r)}{|\Gamma(r)-\Gamma(0)|} - 
\dfrac{\Gamma'(0)\cdot[\Gamma(r)-\Gamma(0)] \; \Gamma'(r)\cdot[\Gamma(r)-\Gamma(0)]}{|\Gamma(r)-\Gamma(0)|^3}\right]\\
&\quad\times\left[1-\left(-\frac{\Gamma'(0)}{|\Gamma'(0)|} \cdot \frac{\Gamma(0)-\Gamma(r)}{|\Gamma(0)-\Gamma(r)|}\right)^2\right]^{-1/2},
\end{aligned}
\]
which is well defined and continuous for any $r>0$. A tedious
calculation shows that $\zeta'(0)=\lim_{r\to 0^+}\zeta'(r)$  is also well-defined and given by
\[
|\zeta'(0)| = |\sqrt{-s''(0)}| = \frac{1}{2|\Gamma'(0)|^2}\left\{|\Gamma'(0)|^2|\Gamma''(0)|^2 - \left[\Gamma'(0)\cdot\Gamma''(0)\right]^2\right\}^{1/2}<\infty.
\]

Hence, there exists $C>0$ such that for all $r\in[0,\rho]$, 
\begin{subequations}
\begin{equation}\label{d_zeta_C}
	|\zeta'(r)|\le C,
\end{equation}
and thus
\begin{equation}\label{zeta_C}
	|\zeta(r)| \le Cr.
\end{equation}
Here and henceforth $C$ indicates a positive constant independent of
$r\in [0,\rho]$, whose value may change from line to line.
\end{subequations}

Next, as shown in \S\ref{sxn:app1},
$\grad\ub^\modes( r, \vartheta)=r^{-1/2}f^\modes(\theta)$, where
$f^\modes\in C^\infty( \mathbb R{;} \mathbb R^{2\times 2})$. From \eqref{betaS}, we can write
\begin{equation*}
\begin{aligned}
\betab_S
= r^{-1/2}\sum_{i,j} \left[ f(\vartheta+\zeta(r) ) : \gb_i(0)
\otimes \gb_j(0)
- f(\vartheta): \gb_i(r)
\otimes \gb_j(r)
 \right] \gb_i(r) \otimes \gb_j(r),
\end{aligned}
\end{equation*}
where we have omitted the superscript $\modes$ of $f$, as we shall do hereafter. It is then
straightforward to show that ${\betab_S}\in L^2(B_\rho(\x_t);
\Rbb^{2\times2})$. 

It remains to show that
 \[
 \frac{\partial\betab_S}{\partial r}, \frac{1}{r} \frac{\partial\betab_S}{\partial \vartheta}\in L^2\left(B_\rho(\x_t); \Rbb^{2\times2}\right).
 \] We first examine
\begin{equation}\label{beta_S_theta_der}
\begin{aligned}
\frac{1}{r} \frac{\partial\betab_S}{\partial \vartheta} &= r^{-3/2}\sum_{i,j} \left[ f'(\vartheta+\zeta(r) ) : \gb_i(0)
\otimes \gb_j(0)
- f'(\vartheta): \gb_i(r)
\otimes \gb_j(r)
 \right] \gb_i(r) \otimes \gb_j(r) \\
&= r^{-3/2}\sum_{i,j} \Bigl\{ [f'(\vartheta+\zeta(r) ) - f'(\vartheta)] : \gb_i(0)
\otimes \gb_j(0) \\
&\quad + f'(\vartheta) : [\gb_i(0)
\otimes \gb_j(0)
- \gb_i(r)
\otimes \gb_j(r)]
 \Bigr\} \gb_i(r) \otimes \gb_j(r).
\end{aligned}
\end{equation}
Since $f$ is $C^\infty$, we apply \eqref{zeta_C} and write
\begin{equation}
  \label{eq:5}
  \|f'(\vartheta+\zeta(r) ) - f'(\vartheta)\|_\infty \le \|f\|_{W^{2,\infty}} |\zeta(r)| \le Cr.
\end{equation}

On the other hand, we note that $\gb_1(r)=-\Gamma'(r)/|\Gamma'(r)|$ and $\gb_2(r)=\omega\cdot\gb_1(r)$ where $\omega:=-\ee_1\otimes\ee_2 + \ee_2\otimes\ee_1$, differentiating with respect to $r$ yields
\[
\gb_1'(r) = - \frac{\Gamma''(r)}{|\Gamma'(r)|} + \frac{\Gamma'(r)\cdot\Gamma''(r)}{|\Gamma'(r)|^3} \Gamma'(r), \quad
\gb_2'(r) = \omega \cdot \gb_1'(r),
\]
for $r\in [0,\rho]$.
Thus $\gb_1'(r)$ and $\gb_2'(r)$ are bounded, and hence
\begin{equation}
  \label{eq:4}
\begin{aligned}
\|\gb_i(0)\otimes \gb_j(0)- \gb_i(r)\otimes \gb_j(r)\|_\infty \le
Cr.
\end{aligned}  
\end{equation}
It follows from (\ref{beta_S_theta_der}), (\ref{eq:5}), and
 (\ref{eq:4} )that $\frac{1}{r}
\frac{\partial\betab_S}{\partial \vartheta}\in
L^2(B_\rho(\x_t);\Rbb^{2\times 2})$.

Now we compute
\[
\begin{aligned}
\frac{\partial\betab_S}{\partial r} &= -\frac{\betab_S}{2r} + r^{-1/2}\sum_{i,j} \Bigl\{ \zeta'(r) f'(\vartheta+\zeta(r) ) : \gb_i(0)
\otimes \gb_j(0) \\
&\quad- f(\vartheta): [\gb_i'(r)
\otimes \gb_j(r) + \gb_i(r)
\otimes \gb_j'(r)]
 \Bigr\} \gb_i(r) \otimes \gb_j(r) \\
&\quad + r^{-1/2}\sum_{i,j} \left[ f(\vartheta+\zeta(r) ) : \gb_i(0)
\otimes \gb_j(0)
- f(\vartheta): \gb_i(r)
\otimes \gb_j(r)
 \right] \\
&\quad \cdot [\gb_i'(r)
\otimes \gb_j(r) + \gb_i(r)
\otimes \gb_j'(r)]
\end{aligned}
\]
The analysis of the term $\betab_S/r$ is similar to the one performed
in \eqref{beta_S_theta_der}, and thus $\partial\betab_S/\partial r\in L^2(B_\rho(\x_t); \Rbb^{2\times2})$ follows from the boundedness of $\gb_1'$ and $\gb_2'$ and \eqref{d_zeta_C}.
%
%
%
}
\end{justification}

\comment{YS: Useful formulas:
\[
\begin{aligned}
s'(r) 
=
\begin{cases}
0, &r=0,\\
\dfrac{1}{|\Gamma'(0)|}\left[
\dfrac{\Gamma'(0)\cdot\Gamma'(r)}{|\Gamma(r)-\Gamma(0)|} - 
\dfrac{\Gamma'(0)\cdot[\Gamma(r)-\Gamma(0)] \; \Gamma'(r)\cdot[\Gamma(r)-\Gamma(0)]}{|\Gamma(r)-\Gamma(0)|^3}\right], &r>0.
\end{cases}
\end{aligned}
\]
and
\[
\begin{aligned}
s''(r) 
&= \frac{1}{|\Gamma'(0)|}  \biggl[\frac{\Gamma'(0)\cdot\Gamma''(r)}{|\Gamma(r)-\Gamma(0)|} - \frac{\Gamma'(0)\cdot[\Gamma(r)-\Gamma(0)]\;\Gamma''(r)\cdot[\Gamma(r)-\Gamma(0)]}{|\Gamma(r)-\Gamma(0)|^3}  \\
&-\frac{2\Gamma'(0)\cdot\Gamma'(r) \; \Gamma'(r)\cdot[\Gamma(r)-\Gamma(0)]
+ |\Gamma'(r)|^2 \Gamma'(0) \cdot[\Gamma(r)-\Gamma(0)]
}{|\Gamma(r)-\Gamma(0)|^3} \\
&+ \frac{3\Gamma'(0)\cdot[\Gamma(r)-\Gamma(0)] (\Gamma'(r)\cdot[\Gamma(r)-\Gamma(0)])^2}{|\Gamma(r)-\Gamma(0)|^5}
\biggr], \quad r>0,
\end{aligned}
\]
while
\[
s''(0) = - \frac{1}{4|\Gamma'(0)|^4} \left\{|\Gamma'(0)|^2|\Gamma''(0)|^2 - \left[\Gamma'(0)\cdot\Gamma''(0)\right]^2\right\}.
\]
}
%

\end{enumerate}

\subsection{Simplified Expressions for the Interaction Integral Functionals}
\label{subsxn:interaction_integrals} We describe three pairs of
\testfunctionname fields $\delta\gammab$ and auxiliary fields
$\betab^b=\betab\au$, and for each pair we provide the simplified
expressions of the interaction integral functional $\Ic[\betab,
\betab\au, \delta\gammab]$ that results from substituting the two
fields. In this section  we have removed subscripts $\modes$ from the
auxiliary fields, as the following results are independent of the
choice of the mode of interest, and doing so clarifies the
presentation.

We begin by stating two  results used in obtaining the simplified
expressions:  (1) for traction-free
auxiliary stress fields $\cs(\betab\au)$, such as
$\cs(\betab^\text{TF})$, and tangential \testfunctionname fields, such
as $\delta\gammab^\text{TAN}$, we have \eq{ \delta\gammab^\text{TAN}
  \cdot \overline \taub\left(\betab,\betab^\text{TF}\right) =
  -\delta\gammab^\text{TAN} \cdot \betab^\text{TF}\,^\top \bar
  \tb, \label{eq:esh_trac} } and (2) for compatible and
divergence-free auxiliary fields, such as $\betab^\text{DFC}$, we have
\eq{ \overline \lambdab\left(\betab, \betab^\text{DFC} \right) =
  \betab^\text{DFC}\,^\top \bb.
\label{eq:esh_dive}
}

\begin{justification}[Equations \eqref{eq:esh_trac} and \eqref{eq:esh_dive}]
We begin with \eqref{eq:esh_trac}. Recalling the expression \eqref{eq:taub} for $\overline\taub\left(\betab,\betab\au\right)$ we have, over $\Csc_\rho^\pm$,
\begin{equation}
\label{eq:TractionSubs}
\delta \gammab \cdot\overline\taub\left(\betab,\betab\au\right) = \overline
w\left(\betab,\betab\au\right) \delta\gammab \cdot \nb- \delta\gammab \cdot \betab
\,^\top\cs\left(\betab\au\right)\nb - \delta\gammab \cdot\betab\au\,^\top \overline
\tb.  
\end{equation}
Since we assumed that $\delta \gammab$ is a tangential \testfunctionname field ($\delta \gammab
\cdot \nb = 0 $) and because
$\cs(\betab\au)$ is traction free ($\cs(\betab\au)\nb =0 $ on $\Csc_\rho^\pm$) then \eqref{eq:esh_trac} holds.

Next, we look at \eqref{eq:esh_dive}. Recall that, from \eqref{eq:lambda},
\begin{equation*} 
\begin{aligned} 
\overline \lambdab(\betab,\betab\au) = \betab: \grad\cs (\betab\au ) -
 \cs\left(\betab\right) :(\grad \betab\au\,)^\top- \betab\,^\top \dive
\cs(\betab\au) + \betab\au\,^\top \bb.
  \end{aligned}
\end{equation*}
Since we assumed that $\cs(\betab\au)$ is divergence-free, the third term
above vanishes. Furthermore, since we assumed that $\betab\au$ is
compatible, there
exists $\Phib\colon  B_\rho(\xt)\setminus \Csc\to \Rbb^2$ such that $\betab\au = \grad \Phib$.
Exploiting the major and minor symmetries of the constitutive tensor $\Cbb$, we
have
\begin{equation*}
\begin{aligned} &\betab : \grad
\cs\left( \betab\au \right) -  \cs(\betab): \grad
\betab\au\,^\top = \betab: \Cbb : \grad \grad
\Phib - \betab : \Cbb: \grad \grad\Phib  = \0.  
\end{aligned}
\end{equation*}
\comment{
\[
	\dive \overline\Sigmab = \dive ( \betab: \Cbb : \betab\au - \betab^\top \cs(\betab\au) - \betab\au\,^\top \cs(\betab).
\]
Assume $\exists \Phib \in \Rbb^2$ with components $\Phi_i$ such that $\betab\au = \grad\Phib \Rightarrow \beta\au_{ij} = \Phi_{i,j}$. I am going to abuse notation and let $\cs \leftarrow \cs(\betab) = \Cbb:\betab, \cs\au \leftarrow \cs(\betab\au) = \Cbb:\betab\au $
\[
\begin{aligned}
	\Sigma_{ij,j} & = ( \betab_{mn}\Cbb_{mnpq}\beta\au_{pq} \delta_{ ij} -
	\beta_{ki} \sigma\au_{kj} - \beta\au_{ki}\sigma_{kj} )_{,j} \\
	& =   \betab_{mn,i}\Cbb_{mnpq}\beta\au_{pq}  +
	\betab_{mn}\Cbb_{mnpq}\beta\au_{pq,i} 
	- \beta_{ki} \cancel{\sigma\au_{kj,j}} - \beta\au_{ki}\cancel{\sigma_{kj,j} }
	- \beta_{ki,j} \sigma\au_{kj} - \beta\au_{ki,j}\sigma_{kj} = \\
	& =  \betab_{mn,i}\Cbb_{mnpq}\beta\au_{pq}  +
	\betab_{mn}\Cbb_{mnpq}\beta\au_{pq,i} 
	- \beta_{ki,j} \Cbb_{mnkj} \beta\au_{mn} - \beta\au_{ki,j} \Cbb_{mnkj} \beta_{mn} = \\
	& =  u_{m,ni}\Cbb_{mnpq}\Phi_{p,q}  +
	u_{m,n}\Cbb_{mnpq}\Phi_{p,qi} 
	- u_{k,ij} \Cbb_{mnkj} \Phi_{m,n} - \Phi_{k,ij} \Cbb_{mnkj} u_{m,n} = \\
		& =  u_{m,ni}\Cbb_{mnpq}\Phi_{p,q}  +
	u_{m,n}\Cbb_{mnpq}\Phi_{p,qi} 
	- u_{k,ji} \Cbb_{kjmn} \Phi_{m,n} - \Phi_{k,ij} \Cbb_{mnkj} u_{m,n} = 0
\end{aligned}
\]
In the third term I used $u_{k,ij} = u_{k,ji}$ and $\Cbb_{mnkj}= \Cbb_{kjmn}$

}
Thus, \eqref{eq:esh_dive} holds.
\end{justification}

We  now present the simplified expressions for the functional
obtained for each pair, as well as for the particular
case of rectilinear cracks, in order to re-connect these results
with what is commonly found in the literature.

\begin{enumerate}[(1)]
\item \emph{Unidirectional \testfunctionname
with divergence-free auxiliary fields.} 
We set $\delta\gammab=\delta\gammab^\text{UNI}$ and
$\betab\au=\betab^\text{DFC}$. Then  \eqref{eq:NoGradInside} implies that
the domain of integration of $\Ic_2$ reduces to $[B_\rho(\xt) \setminus
B_{\rho_I}(\xt)]\setminus \Csc$. Substituting  \eqref{eq:esh_dive}  in 
\eqref{eq:3} then simplifies to
\eq{ \begin{aligned}
\Ic\left[\betab,\betab^\text{DFC},\delta\gammab^\text{UNI}\right]  = & 
\int_{\Csc_\rho^\pm} 
\delta\gammab^\text{UNI}\cdot \overline\taub \left(\betab,\betab^\text{DFC}\right) \;dS
-\int_{ B_\rho(\xt)\setminus \Csc } \betab^\text{DFC}\,^\top \bb\cdot
\delta\gammab^\text{UNI}\;dV.  \\ 
& -\int_{ [B_\rho(\xt) \setminus B_{\rho_I}(\xt)]\setminus \Csc }
\overline\Sigmab\left(\betab,\betab^\text{DFC}\right):\grad\delta
\gammab^\text{UNI}\;dV.
\end{aligned} 
\label{eq:interint_uni_dive}
} 

\item \emph{Tangential \testfunctionname with divergence-free auxiliary fields.}
A slight variation of the previous pairing
is the combination $\delta\gammab=\delta\gammab^\text{TAN}$ and
$\betab\au=\betab^\text{DFC}$.  Applying \eqref{eq:TractionSubs},
\eqref{eq:TanTestFunction}, and \eqref{eq:esh_dive} yields
 \eq{
\begin{aligned}
\Ic\left[\betab,\betab^\text{DFC},\delta\gammab^\text{TAN}\right] = &
-\int_{\Csc_\rho^\pm}\left[\delta\gammab^\text{TAN}\cdot\betab
\,^\top\cs\left(\betab^\text{DFC}\right)\nb  +
\delta\gammab^\text{TAN}\cdot\betab^\text{DFC}\,^\top\overline \tb
\right]\;dS  \\ &-\int_{ B_\rho(\xt)\setminus \Csc  } \big[ 
\overline\Sigmab\left(\betab,\betab^\text{DFC}\right):\grad\delta\gammab^\text{TAN}+    \betab^\text{DFC}\,^\top \bb\cdot
\delta\gammab^\text{TAN} \big]\;dV.  \end{aligned} \label{eq:interint_tang_dive}}

\item \emph{Tangential \testfunctionname with traction-free auxiliary fields.}
Here we employ $\delta\gammab=\delta\gammab^\text{TAN}$ and
$\betab\au=\betab^\text{TF}$.  Applying \eqref{eq:TractionSubs},
\eqref{eq:TanTestFunction}, \eqref{eq:TracFreeAuxField} leads to
\eq{ \begin{aligned}
\Ic\left[\betab,\betab^\text{TF},\delta\gammab^\text{TAN}\right]  = & 
-\int_{\Csc_\rho^\pm}
\delta\gammab^\text{TAN}\cdot\betab^\text{TF}\,^\top\overline \tb \;dS \\
&-\int_{ B_\rho(\xt)\setminus \Csc } \big[
\overline\Sigmab\left(\betab,\betab^\text{TF}\right):\grad\delta
\gammab^\text{TAN} +
\overline \lambdab \left( \betab , \betab^\text{TF} \right)\cdot\delta \gammab^\text{TAN} \, \big] dV. 
 \label{eq:interint_tang_trac} \end{aligned} 
}

\item \emph{Locally rectilinear cracks.}
Finally it is worth noting that in the particular case of a locally linear crack
geometry, i.e. $\Gamma''(r) = 0, \forall r \in [0,\rho]$,
$\delta \gammab=\delta\gammab^\text{TAN}=\delta\gammab^\text{UNI}$ and
$\betab\au=\betab^\text{DFC}=\betab^\text{TF}$, the interaction integrals of
 \eqref{eq:interint_uni_dive}, \eqref{eq:interint_tang_dive} and
\eqref{eq:interint_tang_trac} all simplify to
\begin{equation*} \begin{aligned}
\Ic[ \betab,\betab\au, \delta \gammab] =& -\int_{\Csc_\rho^\pm}
\delta\gammab\cdot\betab\au\,^\top\overline \tb \;dS-\int_{[B_\rho(\xt)
\setminus B_{\rho_I}(\xt)]\setminus \Csc
}\overline\Sigmab(\betab,\betab\au):\grad \delta \gammab\; dV  \\ &-
\int_{B_\rho(\xt)\setminus\Csc } \betab\au\,^\top\bb \cdot
\delta\gammab\; dV \end{aligned} \end{equation*} which is the traditional
expression of the interaction integral for a straight crack first introduced in
\citet{yau1980} and commonly found in the literature. 
\end{enumerate}

\added[id=al]{The presence of singularities in some of the factors in each one of
the terms in \eqref{eq:3} raises the question of whether the integrals
therein are well-defined.  For the choices of material variation and
auxiliary fields  above, the three terms are in fact integrable. It
is straightforward\deleted[id=mc]{s} to see that  $\Ic_2$ is integrable, and the integrability of $\Ic_1$
and $\Ic_3$ is discussed below.}

\begin{justification}[Integrability of $\Ic_1$]
We show the integrability of $\Ic_1$ by considering each term of
$\overline\taub$ (\replaced[id=mc]{cf.}{c.f.} \ref{eq:taub}). For the first
term of $\overline\taub$, notice that $\overline w(\betab,\betab\au) {=O(r^{-1})}$ and
 $\delta \gammab^\text{UNI}\cdot \nb  {=O(r)} $  as $r\to 0$,  and hence
 the first term in $\delta\gammab\cdot\overline\taub$
 asymptotically behaves as a constant near the crack tip. For the second
 term  of $\overline\taub$, we only need to consider the
 case $\betab\au=\betab^\text{DFC}$. Notice that  on the crack faces,
 $\vartheta=\pm\pi - \zeta(r)$, and therefore  $\cos(\vartheta/2),
 \cos(3\vartheta/2)=O(r)$. Using the expressions for $\cs^\modes$
 in \S \ref{sxn:app1}, this implies that  
 \[\cs(\betab^\text{DFC}):\gb_1(0)\otimes\gb_2(0), \cs(\betab^\text{DFC})\colon\gb_2(0)\otimes\gb_2(0)=O(r)
 \] 
on the crack faces near the crack tip. Moreover, since $\nb=\gb_2(r)=\gb_2(0)+O(r)$ and $\gb_1(0)\cdot\gb_2(0)=0$, we have $\cs(\betab^\text{DFC})\nb 
 =O(r)$. Finally, since $\betab =O(r^{-1/2})$ close to the
 crack tip, we can conclude that $\betab^\top
 \cs(\betab^\text{DFC})\nb \sim r^{1/2}$ as $r\to 0$, and hence it is integrable. 
For the third term in $\overline\taub$,  if $\overline\tb \in L^\infty(\Csc_\pm)$ then $\delta \gammab \cdot 
 \betab\au\,^\top \bar\tb = O(r^{-1/2})$ as $r\to0$, which is
 integrable as well.
\end{justification}

\begin{justification}[Integrability of $\Ic_3$]
As discussed in \S \ref{subsxn:interaction_integrals}, we know $\overline
\lambdab(\betab,\betab^\text{DFC} ) = \betab^\text{DFC}\,^\top\bb$. If $\bb \in
L^\infty(\Bc_\Csc )$ then $\betab^\text{DFC}\,^\top \bb = O(r^{-1/2})$ as $r\to0$.
Therefore, for $\betab\au=\betab^\text{DFC},$
$\Ic_3$ is integrable.


For $\betab\au=\betab^\text{TF}$, we begin by taking advantage of \eqref{betaS} and the linearity of $\overline\lambdab$ in the second argument to write
\[
 \overline \lambdab\left( \betab, \betab^\text{TF}  \right) =
\overline \lambdab\left( \betab, \betab^\text{DFC} \right) + \overline\lambdab\left( \betab, \betab_S \right).
\]
But from earlier discussion about the regularity of
$\betab^\text{TF}$, $\grad\betab_S=O(r^{-1/2})$ as $r\rightarrow0$. Thus, it
is straightforward to show that $\overline\lambdab\left( \betab,
  \betab_S \right)=O(r^{-1})$, and hence $\Ic_3$ is integrable.
%
\end{justification}

\sorted{
\todo{AL: Finish the above paragraph when I hear back from Maurizio}
\todo{MC: the above comment was in regard to the remark in \S 3.5 which I moved from here to \S 3.5 and edited according to our discussion}
\todo{AL: This is wrong I think. The term $\delta \gammab^\text{UNI}\cdot
   \nb\sim r$. YS: Well, it is correct. When $r\rightarrow0$, $\delta
   \gammab^\text{UNI}=\gb_1(0)$ and $\nb=\gb_2(r)$. Then $\delta
   \gammab^\text{UNI}\cdot\nb = \gb_1(0)\cdot\gb_2(r) =
   \gb_1(0)\cdot\gb_2(0) + r \gb_1(0)\cdot\gb_2'(0) + O(r^2) = r
   \gb_1(0)\cdot\gb_2'(0) + O(r^2)$.

   AL: We are in agreement....}
   }
\subsection{Choosing the Interaction Integral Functional to Use} \label{subsxn:concl_rem} Before introducing the
numerical approximation of the above integrals it is worth making some remarks
on which functional is best suited for a specific application.

When the crack faces are loaded, a boundary integral over the faces
has to be carried out irrespective of the auxiliary fields. For this
particular problem it may be appealing to choose a pairing with
$\betab^\text{DFC}$ such as \eqref{eq:interint_uni_dive} and
\eqref{eq:interint_tang_dive}.  Doing so reduces the numerical
complexity of the interaction integral as $\overline\lambdab$ greatly
simplifies (and vanishes identically in the absence of body forces).

If the crack faces are traction free, it can be appealing to compute
the value of the interaction integral merely as  a domain integral, as
in \eqref{eq:interint_tang_trac}. This eliminates the need to construct quadrature rules over the crack faces. 
\sorted{
\todo{AL: Maurizio, what's the
meaning of this? you are using piecewise linear functions}
\todo{ 
MC: I removed the comment and replaced it with a straight forward statement.
}
}
 Furthermore in the presence of
 body forces, the integrand $\overline\lambdab$ will be non-zero even
with $\betab^\text{DFC}$, thus requiring the computation of the domain
integral. For this particular case using $\betab^\text{TF}$ will
result in a computationally more efficient technique.

\begin{rmk}[Omission of unidirectional material variation with traction-free auxiliary fields] 
The pairing $\delta\gammab^\text{UNI}$ with $\betab^\text{TF}$ is
omitted because it provides no advantage over other pairings. In fact,
because of $\delta \gamma^{UNI}$,  we have to compute the boundary
integral $\Ic_1$ regardless of the loads on the body. Similarly,
because of $\betab^\text{TF}$, we have to perform the domain integral
associated with the divergence of the reciprocal energy momentum
tensor regardless of the loads on the body. It is thus apparent that
for this particular pairing  we do not eliminate neither $\Ic_1$ nor
$\Ic_3$, unlike for other pairings (when traction and body forces are
zero).  Therefore this pairing would result in computationally
inefficient formulation, with no apparent advantage over other pairings.
\end{rmk}
\sorted{
\todo{Maurizio, are there any considerations in terms of the observed
  performance in the numerical examples later on, that should be
  mentioned here?}
  
\todo{Computationally I have not noticed any advantage over using one method over the other. I will perform some comparisons albeit the computation of the SIFs is really not demanding (fractions of the solve time ) as the dimension of the problem grows. Considering also the length of the paper I do not think a discussion of the compute time would be worthwhile.}  
}


\section{Numerical Computation of the Interaction Integral} \label{sxn:computation} \newcommand{\FEM}{finite
element method }

In this section we are concerned with the computation of the
interaction integral between any of the auxiliary fields and the
solution $\ub$ of Problem \eqref{eq:elasticity}. The solution $\ub$
and its gradient $\betab=\nabla\ub$ are going to be approximated by a
convergent sequence of displacement fields
$\{\ub^h\}_h$ and strain fields $\{\betab^h=\nabla\ub^h\}_h$,
respectively, or discrete solutions.  We first give some general
considerations on the expected conditions for convergence of the
computed stress intensity factors that are independent of the method
adopted to compute $\betab^h$. Then we particularize some of
these results to a $\{\betab^h\}$ that stems from a sequence of finite element
approximations. Additionally, we discuss some minor changes needed when
the
exact domain needs to be approximated as well because of the presence
of curvilinear cracks. The result of this
section is an algorithm to compute $\Ic$, summarized at the end of
this section for readers interested in its implementation.

\subsection{Approximation of the Interaction Integral}
\label{sec:discr-inter-integr}

Given a sequence of discrete solutions $\betab^h\to \betab$ in a sense
to be specified later, it defines a sequence of values for the
interaction integral $\Ic[\betab^h,\betab\au,\delta \gammab]$, and
hence a sequence of approximate stress intensity factors 
\eq{
  K_\modes^h(\betab^h) = \frac{ \Ic\left[\betab^h,
      \betab\au_\modes,\delta\gammab\right]}{\eta}.
\label{eq:sif_discrete}
}
For the approximate stress intensity factors to converge to the exact
ones $K_{I,II}\replaced[id=mc]{[\betab]}{(\betab)}$ as $h\searrow 0$, it is enough for $\Ic$ to be continuous with
respect to its first argument in the topology in which $\betab^h$
converges to $\betab$. It is simple to see then that for the stress
intensity factors computed with 
 \eqref{eq:interint_tang_trac}, it is
enough to have $\betab^h\to \betab$ in $L^2(\Bc_{\Csc}\replaced[id=mc]{;}{,}\mathbb
R^{2\times 2})$, because these functionals do not
involve integration of $\betab$ over $\Csc_\rho^\pm$. In contrast, for
the stress intensity factors computed with
 \eqref{eq:interint_uni_dive} and \eqref{eq:interint_tang_dive}, we
 additionally need to request $\betab^h\to\betab$ in $L^2(\Csc_\pm\replaced[id=mc]{;}{,}\mathbb
R^{2\times 2})$.

\subsubsection{Finite-Element-Based Approximations }
For sequences $\{\ub^h\}_h$ constructed with some finite element
spaces there is an important advantage of having a functional $\Ic$
continuous in its first argument. That is, the order of convergence of
the stress intensity factors doubles the order of convergence of
$\betab^h$ to $\betab$ \cite[contain related results, and see \S
\ref{sxn:convergence_functional}]{GuMa1994,buscaglia2000sensitivity}, so the values of the stress intensity factors
are a lot more accurate than the discrete solution itself. It is not difficult
to check that $\Ic$ in \eqref{eq:interint_tang_trac} is continuous in
its first argument in $L^2(\Bc_{\Csc}\replaced[id=mc]{;}{,}\mathbb R^{2\times
  2})$. Therefore, we can conclude that if
$\|\betab^h\to\betab\|_{L^2(\Bc_{\Csc}\replaced[id=mc]{;}{,}\mathbb R^{2\times 2})} \le C
h^k$, then
$|K_\modes^h(\betab^h)-K_\modes(\betab)|\le C h^{2k}$, for some $C>0,
k\in \mathbb N$ independent of $h$.

The functional $\Ic$ given by \eqref{eq:interint_uni_dive} or
\eqref{eq:interint_tang_dive} is not continuous in its first argument
in $L^2(\Bc_{\Csc}\replaced[id=mc]{;}{,}\mathbb R^{2\times 2})$, because of the boundary
integrals. As described in \S \ref{sxn:convergence_functional}, the
result that states that the order of convergence of $K^h_\modes$
should double that of $\betab^h$ does not apply in this
case. Nevertheless, as shown later in the numerical examples, the
rates of convergence seem to double as well for these two functionals.

The values of $k$ of the numerical methods used for the numerical
examples in \S \ref{sxn:numerical_examples} are $0.5$ and $1$, and
thus these methods converge at the rates of $\Oc(h)$ and $\Oc(h^2)$,
respectively.  In order to achieve higher order of accuracy within the
context of finite element methods, it is necessary to make use of
alternative methods that can accurately resolve the stress
singularity, such as \cite{liu2004, shen2009, chiaramonte2014b}.
Furthermore, for curvilinear cracks, high-order approximations of
the crack faces are needed to attain a corresponding order of accuracy of the method.

\subsection{Discrete Interaction Integral Functional}
\label{sec:prec-defin-discr}

One of the delicate issues to be addressed in this section is the fact
that for curvilinear cracks each discrete solution is computed on an
approximation of the exact domain. 
The precise steps to handle the difference between exact and
approximate domains in finite element methods are fairly standard, and
hence are often skipped in the description of new methods. We decided
to discuss this part with some additional detail here because of the
presence of the boundary integrals. The uninterested reader could
simply skip to the next section. 


For each $h$,  the discrete
solution $\replaced[id=mc]{\ub^h}{\ub_h}$ is computed on a domain $\Bc^h_{\Csc}$ with crack
faces $\Csc^h_\pm$. We assume that as $h\searrow 0$ the approximate
domain and the approximate crack faces and their normal vectors
converge to the exact ones\footnote{A possible condition is that for each
  $h$ there exists a one-to-one map $\boldsymbol{\Psi}_h\in M_h =
  W^{1,\infty}(\Bc_\Csc; \Bc^h_{\Csc})$ that converges to the identity
  in $M_h$ at a suitable rate, with $\text{det }\nabla
  \boldsymbol{\Psi}_h>\epsilon$ for some $\epsilon>0$ uniformly in
  $h$, and such that $\Bc^h_{\Csc}=\boldsymbol{\Psi}_h(\Bc_\Csc)$ and
  $\Csc^h_\pm=\boldsymbol{\Psi}_h({\Csc_\pm})$.  This is a type of
  condition for finite element approximations, and it is simply a
  condition on the way the approximate domains are to be constructed;
  we will not need to explicitly construct $\boldsymbol{\Psi}_h$ to
  compute the interaction integrals. }. For example, a
standard isoparametric mapping will suffice. 

Because the discrete solutions are defined over different domains, the
interaction integral functional $\Ic$ needs to be approximated as
well (the integrals over $\Bc_\Csc$ and $\Csc_\pm$ could not be
computed for the discrete solutions otherwise). Thus, for each $h$ we construct a discrete interaction
integral functional $\Ic^h:\Bsc^a_h \times \Bsc^b \times \Mc \to
\Rbb$, where $\Bsc^a_h$ is defined analogously to
$\Bsc^a$, but considering $\Bc_\Csc^h$ as
the domain of the problem.  Then, 
given a sequence of solutions $\replaced[id=mc]{\ub^h}{\ub_h}$ converging to the
exact solution
$\ub$ in $H^1$\footnote{For example, $\ub-\ub_h\circ\boldsymbol
  \Psi_h\searrow 0$ in $H^1(\Bc_\Csc;\mathbb R^2)$.}, we expect  $
 \lim_{h\searrow
0}\Ic^h[\betab^h,\betab\au,\delta\gammab] =
\Ic[\betab^e,\betab\au,\delta\gammab]$, for any of the $\betab\au\in
\Bsc^b$ and any $\delta\gammab \in \Mc$, where  $\betab^h
=\nabla \ub^h$ and $\betab^e=\nabla \ub$.  Equivalently, letting  the approximate stress intensity factors
 $K_{\modes}^h:\Bsc^a_h\to \Rbb$ be
\eq{
K_\modes^h(\betab^h) = \frac{ \Ic^h\left[\betab^h, \betab\au_\modes,\delta\gammab\right]}{\eta},
\label{eq:sif_discrete-discrete}
}
we expect $\lim_{h\searrow 0 }\betab^h =\betab^e$ and $\lim_{h\searrow
  0 } K^h_\modes(\betab^h)=K_\modes(\betab^e)$. These ideas are
compactly shown in the following commutative diagram:
\begin{equation*}
\centering
\begin{tikzpicture}[descr/.style={fill=white,inner sep=2.5pt}]
  \matrix (m) [matrix of math nodes, row sep=3em,
  column sep=3em]
  { \betab^h & K^h_{\modes} \\
     \betab^e & K_{\modes} \\ };
  \path[->,font=\scriptsize]
  (m-1-1) edge node[auto] {$ \Ic^h $} (m-1-2) ;
  \path[->,font=\scriptsize]
	(m-1-1) edge node[left] {$ h \searrow 0 $} (m-2-1);
  \path[->,font=\scriptsize]
	(m-1-2) edge node[right] {$ h \searrow 0 $} (m-2-2);
  \path[->,font=\scriptsize]
  (m-2-1) edge node[auto] {$ \Ic $} (m-2-2) ;		
\end{tikzpicture}
\end{equation*}


\label{subsxn:discrete_interinta}
 \ic{The
approximate domain and crack faces are denoted by $\Bc^h$ and $\Csc^h_\pm$
respectively, while we define for convenience $\Bc^h_\Csc = \Bc^h \setminus
\Csc_\pm^h$.} 

The functional $\Ic^h$ is defined as
\begin{equation} 
\begin{aligned}
 \Ic^h\left[\betab^h, \betab\au,\delta \gammab\right] = &  \sum_{g \in
\Gc_\Csc }  \left.\delta \gammab \cdot
\overline\taub^h\left(\betab^h, \betab\au\right) \right|_{
\x_g }  w_g & \\ & - \sum_{g \in \Gc } \left.\left[
\overline\Sigmab\left(\betab^h,\betab\au\right):\grad \delta \gammab  +
\overline\lambdab\left(\betab^h, \betab\au\right)\cdot \delta \gammab
\right]\right|_{ \x_g} w_g, \label{eq:IhG}
\end{aligned} 
\end{equation} 
where $\Gc$ and $\Gc_\Csc$ denote the set of quadrature points over
$\Bc_\Csc^h$ and $\Csc_\pm^h\cap B_\rho(\xt)$, respectively.  Each integration
point $g$ in $\Gc$ or $\Gc_\Csc$ has position $\x_g$ and integration
weight $w_g$. We assumed that all quadrature points over $\Bc_\Csc^h$
belong to $\Bc_\Csc^h\cap\Bc_\Csc$ which is true for a small enough mesh size, to be able to evaluate
$\betab\au$, which is defined over $\Bc_\Csc$. Additionally, we defined
$\overline\taub^h$  as
an approximation to
$\overline\taub$ given by 
\[
\overline 
\taub^h\left(\betab^h, \betab\au\right) = \betab^h:\cs(\betab\au \circ  \mathfrak{p} )
\nb\circ \mathfrak{p} -
\betab^h\,^\top \cs( \betab\au \circ  \mathfrak{p} )\nb \circ \mathfrak{p} - \left( \betab\au\,^\top \bar
\tb \right)\circ \mathfrak{p},
\]
where $\mathfrak{p}:\Csc_\pm^h\mapsto \Csc_\pm $ is the constant-radius projection of a point onto the crack:
\begin{equation}\label{eq:pi}
 \mathfrak{p}(\x) := \Gamma(|\x-\x_t|).
\end{equation}
This projection is well defined when $\Csc_\pm^h$ and  $\Csc_\pm$
are close enough. Other projections are possible as well. This one is
convenient, since it is also involved
in the definition of $\betab^\text{TF}$. 


\begin{rmk}[{Appearance} of $\mathfrak{p}$ in the boundary integral]
  The functional (\ref{eq:IhG}) is an approximation to integrals over
  $\Bc^h_{\Csc}$ and $\Csc_\pm^h$, and the quadrature points of $\Gc$
  and $\Gc_\Csc$ belong to them.  The \testfunctionname and
  auxiliary fields are constructed over the exact domain $\Bc_{\Csc}$,
  and could be tangent or traction free  to 
  its boundary $\Csc_\pm$, respectively, but not necessarily to its approximation $\Csc_\pm^h$. Furthermore, the traction $\bar\tb$ is prescribed and only known
  over the exact crack $\Csc_\pm$.  To address this difficulty the auxiliary traction
  $\cs(\betab\au)\nb$, the auxiliary fields  $\betab\au$ themselves, and
  the applied traction $\overline\tb$ are evaluated at their constant
  radius projection onto the curved crack  $ \mathfrak{p}(\x)$
  for $\x\in\Csc_\pm^h \cap B_\rho(\xt)$.
 Note as well that $\delta \gammab \circ \mathfrak{p}=
  \delta \gammab$, since $\delta \gammab$ depends only on $r$.

  Figure \ref{fig:bdprojection} shows an example for piecewise linear
  interpolations of the exact geometry along with its discrete
  approximation and the mapping $ \mathfrak{p}$ in \eqref{eq:pi}. As
  the mesh is refined, the difference between $\mathfrak{p}(\x)$ and
  $\x$ should go to zero, and the Jacobian of the mapping $
  \mathfrak{p}:\Csc_\pm^h \to \Csc_\pm$ should be very close to unity, thus
  permitting the composition in the boundary integral without
  introducing significant errors.

\begin{figure}[htbp]
\centering \input{./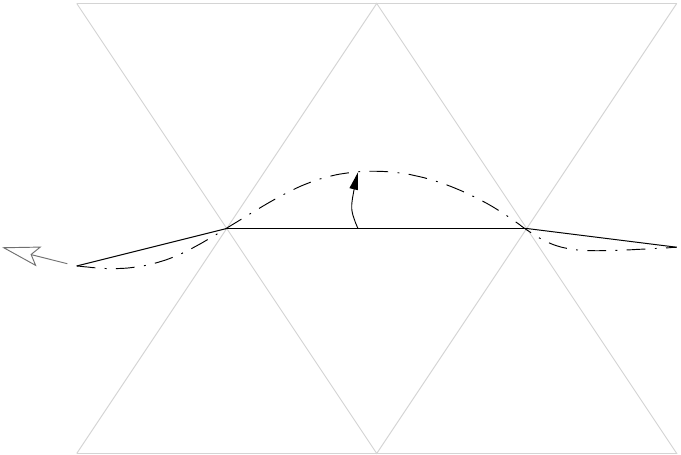_t} 
\caption{Example of constant radius projection for piecewise linear
interpolations of $\Csc_\pm$.}
\label{fig:bdprojection} 
\end{figure}
\end{rmk}

\begin{rmk}[Convergence of the singular boundary integral] Recall that if the
applied crack-face traction is bounded at the origin, we expect the boundary
integral of \eqref{eq:interinta} to possess a singularity at the crack
tip as $ \betab\au \propto r^{-1/2} $ for $r\to0$. Integrating a singular function
using standard Gaussian quadrature over a successively refined discretization
was observed experimentally to lead to errors of the order $\Oc(h^{1/2} )$ (see \cite[Appendix B]{shen2009}).
Therefore, in the particular case in which $\overline \tb$ is bounded and non-zero
at $\xt$, it is necessary to address the numerical integration of the singular
function in order to preserve the expected convergence rate. 
Here we computed the singular integral\[
\int_{ \Csc^\pm_\rho} \delta \gammab \cdot \betab\au\,^\top \bar
\tb \;dS = \int_{0}^{\rho} \left(\delta \gammab \cdot \betab\au\,^\top \bar
\tb\right)_{\pm}(r) |\Gamma'(r) |\;dr
\] 
by  pulling back the integrand $\left(\delta \gammab \cdot \betab\au\,^\top \bar
\tb\right)_{\pm}(r) |\Gamma'(r) |$ from $[0,\rho]$ \added[id=mc]{to $\mathfrak{s}^{-1}([0, \rho])$ through the map $\mathfrak{s}( \tilde r ) =  ( \tilde r^2/\rho -\tilde r )\, q( \tilde r )  + \tilde r $ with $q$ of ~\eqref{eq:cutoff}. Then we simply use the quadrature rule $\Gc_\Csc$ over $\mathfrak{s}^{-1}([0, \rho])$.  Namely, if we let $r_g := | \x_g - \x_t |$ for all $g \in \Gc_\Csc$, we compute the above integral as} 
\[
\int_{ \Csc^\pm_\rho} \delta \gammab \cdot \betab\au\,^\top \bar
\tb \;dS  \approx \sum_{ g \in \Gc_\Csc }  \delta \gammab \cdot \betab\au\,^\top \bar
\tb \,\,  \bigg|_{ \mathfrak{s}( r_g) } \,  |\mathfrak{s}'(r_g) | \, w_g \,.
\]
The mapping $\mathfrak{s}$ effectively performs a local change of variable $r \mapsto r^2$ which removes the $\sqrt{r}$ singularity of the integrand thus allowing to recover optimal rates of convergence. The scaling of the $1/\rho$ in $\mathfrak{s}$ serves to ensure that the mapping $\mathfrak{s}$ is injective over $[0,\rho]$.
\deleted[id=mc]{
 from $[0,\rho]$ to $[0,\sqrt \rho]$ through the map  $r(\tilde r) =
\tilde r^2$, which removes the singularity and renders  a smooth integrand{Here the subscript
$\pm$ indicates the side of the crack on which the integrand is
evaluated.}. Thus, standard integration rules can be adopted on this new
domain.
}
\deleted[id=mc]{For the examples in this manuscript we adopted  a Gauss
quadrature rule with 23 sampling points, COMPLETE
finding a curve $\tilde{\Csc}^\pm_\rho$ such that with a singular map $s: r\mapsto r^2$, $s(\tilde{\Csc}^\pm_\rho)=\Csc^\pm_\rho$, then rewriting this integral over $\tilde{\Csc}^\pm_\rho$, which has a bounded smooth integrand, and finally employing
{mapping $\Csc^\pm_\rho$ in a singular way ($r\mapsto r^2$) to render a bounded smooth integrand, then employ} a {single} Gauss quadrature rule with 23 sampling points \added[id=ys]{for this smooth integrand}.This rule is an overshoot to render a small enough consistency error for the purpose of obtaining the convergence curves.
The rest of the terms were calculated with regular (in relation to the order of the finite element interpolant)  Gauss quadrature rules for each edge. We remark that the domain of the above integrand is $\Csc^\pm_{\rho}$ not $\Csc^h_\pm \cap B_{\rho}(\xt)$ as all of its terms are independent of the approximate solution.}

\todo{MC: Yongxing, could you verify that what I am stating above is reasonable and expand on it or modify if you feel necessary. YS: After our discussion, I still have a hard time understanding the need of using such a high order but $h$-independent way to compute this integral. We might appear overkilling such a mildly singular integral.
What I would do is one of the following: (1) Scale $\Csc^\pm_\rho$ so
that there is no singularity, and then use a piecewise regular Gauss
rule on the scaled edges; (2) Use a high order rule only for the edge
that contains the tip, and regular Gauss rule for all remaining
edges. (1) is more accurate but (2) is more practical. We can Skype
chat again.
AL: {\color{blue}I like the paragraph Yongxing built for this. It sounds very
reasonable. What is not clear is where is that the 23 Gauss points are
being used? Are they only used on the edge that contains the crack
tip? If so, it is not clear from the statement above. }
YS: No. Please see whether it is clear now without my explaining it here.
MC: {\color{orange} I changed the mapping to be scaled by the cutoff function in the above so that the domain of integration remains the same (i.e. $\mathfrak{s}(\rho) = \rho$). This allows to use the same gauss quadrature constructed for the other terms in the boundary integral thus providing a link to the discretization of domain. I'd be happy to Skype to talk more about it. Note that both here in the numerical calculation of the traction contribution and in \eqref{eq:IhG} we disregard the Jacobian of the constant radius projection in the integration assuming that is close to $1$. } }
\end{rmk}


\subsection{Summary of the method}
\label{sec:summary-method}

We provide here a very concise summary of the method for the reader seeking a guideline for a rapid implementation. 

The calculation of the stress intensity factors can be summarized in the following steps:
\begin{enumerate}[{\it Step 1:}] 
\item Compute the approximation $\betab^h$  to the gradient of the solution of \eqref{eq:elasticity}.
\item Construct $\delta \gammab $ to be either $\delta\gammab^\text{UNI}$ or $\delta \gammab^\text{TAN}$ from \eqref{eq:dguni} or \eqref{eq:dgtang} respectively.
\item Construct $\betab\au_\modes $ to be either $\betab^\text{TF}_\modes $ or $\betab^\text{DFC}_\modes$ from \eqref{eq:betabtf} or \eqref{eq:betabdcf} respectively.
\item With the pair $\delta \gammab ,\;  \betab\au_\modes $ as well as $\bb,\;\overline\tb$ and $\betab^h$ use \eqref{eq:IhG} to compute the value of $\Ic^h$. 
\item Compute the value of $K^h_\modes$ with the above $\Ic^h$
  following \eqref{eq:sif_discrete} (or
  \eqref{eq:sif_discrete-discrete} if appropriate).
\end{enumerate}

We recapitulate int Table \ref{table:simple} the simplifications of each integrand associated with each choice of pairing of $\betab\au$ and $\delta\gammab$.

\renewcommand{\arraystretch}{1.5}
\begin{table}[H]
\centering
\caption{Recapitulation of simplifications associated with the choice of fields. Omission of terms (--) stands for no simplification.}\label{table:simple}
\begin{tabular}{ l | c  c c }
\toprule
\hline
Fields & $\Ic_1 \, ( \Csc^\pm_\rho )$  & $\Ic_2\,(\Bc_\Csc)$  & $\Ic_3\,(\Bc_\Csc)$\\
\hline
$\delta\gammab^\text{UNI} , \betab^\text{DFC}$ & -- & $=0, \forall r < \rho_I$ & $\delta\gammab^\text{UNI} \cdot \betab^\text{DFC}\,^\top \bb$ \\
$\delta\gammab^\text{TAN} , \betab^\text{DFC}$ & $\delta\gammab\cdot[  \betab^\text{TF} \,^\top \overline \tb- \betab^h\,^\top \cs( \betab^\text{DFC})\nb]$ & -- &$ \delta\gammab^\text{TAN} \cdot \betab^\text{DFC}\,^\top \bb  $\\
$\delta\gammab^\text{TAN} , \betab^\text{TF}$ & $\delta\gammab\cdot \betab^\text{TF} \,^\top \overline \tb $ & -- &  -- \\
\hline
\bottomrule
\end{tabular}
\end{table}
\renewcommand{\arraystretch}{1}


\section{Numerical Examples} \label{sxn:numerical_examples}

\def\tables{1}

We next verify the proposed method through two examples. For each we provide
comparisons with analytical solutions. The first problem is concerned with a
circular arc crack in an infinite medium subjected to far-field stresses. The
second problem involves a power function crack subjected to crack face
tractions and body forces. 

For each example we compare the convergence of the stress intensity factors for
lower order methods, namely traditional continuous Galerkin finite element
methods for piecewise polynomial shape functions $P^k$, $k=1,2$, and
for the higher order discontinuous Galerkin extended finite element method (DG-XFEM)
\cite{shen2009}. Both methods are recapitulated in \S \ref{subsxn:numerical_elasticity_solution}.  

As discussed in \S  \ref{sxn:computation}, the interaction integral,
and hence the stress intensity factors, are expected to converge at twice the
rate of the derivatives of the solution. Thus we are expecting to observe
convergence of the order $\Oc(h)$ for lower order methods (whose derivatives
converge as $\Oc(h^{0.5})$) and $\Oc(h^2)$ for the higher-order DG-XFEM method
(whose derivatives converge as $\Oc(h^{1})$), where $h$ is the maximum
diameter of a triangle in each mesh in the family of meshes under consideration. 

In the following examples we will provide systematic convergence curves of the
error in the solution and in the computation of the stress intensity factors.
\ifnum\tables=1Tabulated errors and computed convergence rates will accompany
the above.\fi

We will present two error measures of the solution, one over the interior of
the domain and the other over the crack faces. The error in the solutions over
the interior of the domain will be measured as the $L^2$-norm of the error in
the gradient of displacements over $\Bc_{\Csc}$, and that over the crack faces will be measured as the
$L^2$-norm of the error in the gradient of displacement weighted by $r$ over
$\Csc^h_{\pm}$. Namely, with $\betab^e$ denoting  the
analytical solution of the gradient of displacement fields, we will consider as error measures
\eq{ 
\left\|\betab^h - \betab^e\right\|_{L^2(\Bc_\Csc)} = \bigg[\int_{\Bc_{\Csc} }
		  \textstyle\sum_{i,j} \left(\beta_{ij}^h -\beta_{ij}^e \right)^2 \;dV \bigg]^{1/2}
\label{eq:error_interior} } 
and 
\eq{ 
\left\|\betab^h - \betab^e\circ \mathfrak{p} \right\|_{L^2\left(\Csc^h_\pm, r\right)} = 
\left[\int_{\Csc_\pm^h } \textstyle\sum_{i,j} \left(\beta_{ij}^h -\beta_{ij}^e
\circ  \mathfrak{p} \right)^2\, r \; dS \right]^{1/2} = \left\|\left(\betab^h -
\betab^e\circ \mathfrak{p}\right)\sqrt{r} \right\|_{L^2\left(\Csc^h_\pm\right)}
\label{eq:error_boundary} . 
} In \eqref{eq:error_boundary} the
analytical gradient of displacements are evaluated at their constant radius projection onto the
exact geometry as discussed earlier in \S \ref{subsxn:discrete_interinta}. 

\sorted{
\todo{AL: Maurizio, please change all references to stresses above for strains
MC: done.}
}
\begin{rmk}[{Appearance} of $\sqrt{r}$ in the $L^2(\Csc^h_\pm)$ norm]
The appearance of the $\sqrt{r}$ factor is related to the scaling of
the factors that
multiply  $\betab^h$
in the integrand of $\Ic_1$. Namely $\betab^h$ appears as 
$	\betab^h\,^\top \cs(\betab\au)\nb $  and $	\betab^h: \cs( \betab\au)
\delta\gammab\cdot\nb $.  In the former we have 
$\cs(\betab\au)\nb\sim
r$ as  $r\to
0$, as previously discussed in section \S
\ref{subsxn:interaction_integrals}. In the latter we need to consider the scaling of
$\delta\gammab\cdot\nb$, which is either $\delta\gammab^\text{TAN}\cdot
{\bf n}=0$ or 
$\delta\gammab^\text{UNI}\cdot\nb \sim r$, as well as the scaling  $\cs(\betab\au)\sim 1/\sqrt{r}$, as
$r \to 0$. Hence $\betab^h$ in the latter case is multiplied by a
factor that scales as $\sqrt{r}$ as $r\to 0$. \added{Thus, only the rate of convergence of
  $\sqrt{r} \betab^h$ is needed to evaluate
  the rate of convergence of $\Ic_1$ .}
  
\end{rmk}
\sorted{
\todo{AL: The space on the boundary in which we are measuring the
  convergence changes from $h$ to $h$. Why did we compute the errors
  in that way? The ``more correct'' thing to do would have been to
  integrate over $\Csc_\pm$.
  
  MC: Following our Skype chat we decided to keep as is. 
  }
  }

The error in the stress intensity factors will be \deleted[id=mc]{simply} measured by the
normalized absolute value of the error in the computed stress intensity
factors. Namely, let $K_\modes^h := K^h_\modes\replaced[id=mc]{[\betab^h]}{( \betab^h)}$ be computed with
\eqref{eq:sif_discrete} (or  \eqref{eq:sif_discrete-discrete}) and $K^e_\modes$ be the exact (analytical) stress
intensity factors. We will be concerned with the behavior of 
{\begin{equation*}
\frac{\left|K_\modes^h - K_\modes^e\right|}{\left|K_\modes^e\right|}.
\end{equation*}}

We will also present for each example the value of the computed stress
intensity factors for various values of $\rho$, that is, for different
supports for $\delta \gammab$. As the interaction integral in
\eqref{eq:interinta} is independent of $\rho$, we would like to test
the independence of the computed stress intensity factors on the
support of $\delta \gammab$.

Lastly we remark that for each example we set the material constants
to $\lambda = 277.7\overline7, \mu=2500 $ ($E=1000,\nu = 0.2$) and we
assumed a plane strain state.

\subsection{Numerical Solution of the Elasticity Problem}
\label{subsxn:numerical_elasticity_solution}

We consider two types of finite element methods over a family of
meshes of triangles. In the following, the superscript $(\cdot)^h$
will denote quantities associated with the discrete approximation of
the problem. For each mesh in the family, the domain $\Bc$ is
approximated by $\Bc^h = \overline{\bigcup_{e} T^e}$, the collection
of open, straight triangles $T^e$.  \added[id=mc]{Let $\mathbb V$ denote the set of all vertices in the mesh.} Each mesh in the family conforms to the crack, namely, a
node sits at the crack tip, and there is no edge with its two vertices on different sides of the
crack. \replaced[id=mc]{To
handle the displacement discontinuity across the crack, vertices that lie on $\Csc^\pm$ are duplicated, and so are
edges whose two vertices lie on $\Csc^\pm$. }{ Vertices that lie on $\Csc^\pm$ are duplicated and 
grouped into the set $\mathbb V$, and so are
edges whose two vertices are in $\mathbb V$, to
handle the displacement discontinuity across the crack.}  The union of
these edges on either side of the crack forms the piecewise linear
approximation $\Csc_\pm^h$ to
 $\Csc_\pm$, and we set
$\Bc^h_{\Csc} = \Bc^h\backslash\Csc_\pm^h$.   For convenience, we define
\begin{equation*} {\mathbb V}^d = \{ a \in {\mathbb V} |\x_a \in \partial_d \Bc \}, 
\end{equation*} where $\x_a$ represents the position vector of vertex $a$. 
\sorted{
\todo{AL: It seems to me that $\partial {\cal B}^h=\partial {\cal B}$
  in all of our examples, true? In that case, I can remove the
  statement of boundaries being approximated.

If this is not the case, then I need to explain how we apply boundary
conditions on Dirichlet boundaries in the $k=2$ case

MC: I removed the ``The boundary $\partial \Bc$ is
approximated by $\partial \Bc^h$.'' comment.
}

\todo{AL: Do you think that there is a risk of confusing and element
  $K^e$ with a SIF? Also, why the notation $K^e$, and not simply $K$?
  
  MC: I replaced $K^e$ with $T^e$.}
}
In the following examples, we let $N_a \in H^1(\Bc^h_\Csc) $ be the
$P^k$ shape function associated with node
$a\in{\mathbb V}$, $k=1,2$, such that $N_a(\x_b)=\delta_{ab}$ for all
$a, b\in{\mathbb V}$, where $\delta_{ab}$ is the Kronecker delta. Of
course, for the piecewise quadratic case $k=2$, mid-edge nodes are
added .

The two methods adopted here are:


\begin{enumerate}[(1)]
\item \emph{Standard finite element method.} We seek an
approximate solution $\ub^h \in \Sc^h$, with \begin{equation*} \Sc^h = \left\{
\textstyle\sum_{a \in {\mathbb V} } N_a \ub_a = \ub^h  \in {H^1\left(\Bc^h_\Csc;\Rbb^2\right)
} \middle|  \ub^h( \x_a ) = \overline \ub( \x_a ), \forall a \in {\mathbb V}^d
\right\}.  \end{equation*} We further let \begin{equation*} \Vc^h = \left\{
\textstyle\sum_{a \in {\mathbb V} } N_a \delta\ub_a = \delta \ub^h  \in
{H^1\left(\Bc^h_\Csc;\Rbb^2\right)
} \middle|   \delta \ub^h( \x_a ) =0,
\forall a \in {\mathbb V}^d    \right\}.  \end{equation*}

The numerical approximation of $\ub$ is obtained by finding $\ub^h \in \Sc^h$
such that 
\begin{equation*}
\int_{\Bc^h_{\Csc}}   \cs\left( \betab^h \right):\grad \delta \ub^h \;dV= 
\int_{\Bc^h_{\Csc}}\bb \cdot \delta \ub^h\;dV +
\int_{\partial_\tau \Bc \cup \Csc^h_\pm } \bar \tb \cdot \delta \ub^h \; dS,
\quad \forall \delta \ub^h\in \Vc^h,
\end{equation*}
where 
\begin{equation*}
\betab^h = \grad\ub^h=\sum_{a \in {\mathbb V}}  \ub_a \otimes \grad N_a.
\end{equation*}

\item \emph{Discontinuous-Galerkin extended finite element method.}
Here we recapitulate the method proposed in \cite{shen2009} with slight improvements. Let $h<(1/2)[\dist(\xt, \partial\Bc)-\rho]$ and $r_c$ be such that 
\[\rho+h<r_c<\dist(\xt, \partial\Bc)-h,\] and 
\begin{equation*}
	\Bc^E_h = \overline{\bigcup\left\{T^e\middle| \area[T^e\cap B_{r_c}(\xt)]>0 \right\}},\quad
	\Bc^U_h = \overline{\Bc^h\setminus\Bc^E_h},
\end{equation*}
be the enriched and unenriched regions, respectively. Then we set
\begin{equation*}
	{\mathbb V}^E=\left\{a\in{\mathbb V}\middle|\x_a\in\Bc^E_h\right\},\quad
	{\mathbb V}^U=\left\{a\in{\mathbb V}\middle|\x_a\in\Bc^U_h\right\}.
\end{equation*}
Hence, there are nodes \replaced[id=mc]{that belong to}{ in} both ${\mathbb V}^E$ and ${\mathbb V}^U$. In fact, let $\Gamma^E_h=\partial\Bc^E_h$, then 
\begin{equation*}
	{\mathbb V}^E\cap{\mathbb V}^U = \left\{a\in{\mathbb V}\middle|\x_a\in\Gamma^E_h\right\}.
\end{equation*}

The discontinuous Galerkin extended finite element method (DG-XFEM) is built on the following set:
\begin{equation*}
\begin{aligned}
	\Sc^h = \left\{\ub^h\in {L^2\left(\Bc^h_\Csc;\Rbb^2\right)}\middle|
	\ub^h=k_I\ub^I + k_{II}\ub^{II} +\textstyle \sum_{a\in{\mathbb V}^E} N_a\ub_a^E \text{ in } \Bc^E_h,
	k_I, k_{II} \in \Rbb;\right. \\ \left.
	\ub^h=\textstyle\sum_{a\in{\mathbb V}^U} N_a\ub_a^U\text{ in } \Bc^U_h,
  \ub^h( \x_a ) = \overline \ub( \x_a ), \forall a \in {\mathbb V}^d
	\right\}.
	\end{aligned}
\end{equation*}
The corresponding test space is given by:
\begin{equation*}
\begin{aligned}
	\Vc^h = \left\{\delta\ub^h\in {L^2\left(\Bc^h_\Csc;\Rbb^2\right)}\middle|
	\delta\ub^h=\delta k_I\ub^I + \delta k_{II}\ub^{II} + \textstyle\sum_{a\in{\mathbb V}^E} N_a\delta\ub_a^E \text{ in } \Bc^E_h,
	\delta k_I, \delta k_{II} \in \Rbb; \right. \\\left.
	\delta\ub^h=\textstyle\sum_{a\in{\mathbb V}^U} N_a\delta\ub_a^U\text{ in } \Bc^U_h,
  \delta\ub^h( \x_a ) = \0, \forall a \in {\mathbb V}^d
	\right\}.
	\end{aligned}
\end{equation*}
Therefore, the kinematics of a typical function $\ub^h\in\Sc^h$ is independent in $\Bc^E_h$ and $\Bc^U_h$; 
a discontinuity across $\Gamma_h^E$ arises which is defined as
\begin{equation*}
	\left\llbracket\ub^h\right\rrbracket = \left[\left.\ub^h\right|_{\Bc^E_h}\right]_{\Gamma^E_h} 
	- \left[\left.\ub^h\right|_{\Bc^U_h}\right]_{\Gamma^E_h}
	=\left[k_I\ub^I + k_{II}\ub^{II} +\textstyle \sum_{a\in{\mathbb V}^E\cap{\mathbb V}^U} N_a\left(\ub_a^E-\ub_a^U\right)\right]_{\Gamma^E_h}.
\end{equation*}
\sorted{
\todo{AL: Yongxing, How is the discontinuity appearing? we are using the same
  notation for the degree of freedom of the same node on
  $\Gamma_h^E$. I think that we need to either duplicate the nodes, or
give alternative names to the degrees of freedom on each side of the
interface $\Gamma_h^E$. YS: Fixed.}
}
This discontinuous $\ub^h$ is handled through a DG-derivative $D_{DG}:\Sc^h\rightarrow\grad_h\Sc^h+\Wc^h$:
\begin{equation*}
	D_{DG}: \ub^h\mapsto \grad_h\ub^h + R(\llbracket\ub^h\rrbracket),
\end{equation*}
where $\grad_h\ub^h=\grad\ub^h$ in each $T^e$, $R(\llbracket\ub^h\rrbracket)$ is such that
\begin{equation*}
	\int_{\Bc_\Csc^h} R\left(\left\llbracket\ub^h\right\rrbracket\right): \wb^h \; dV
	= -\int_{\Gamma^E_h} \left\llbracket\ub^h\right\rrbracket\otimes\nb : \left\{\wb^h\right\} \; dS, \quad \forall \wb^h \in \Wc^h,
\end{equation*}
where
\begin{equation*}
	\Wc^h=\prod_e \left[P_1(T^e)\right]^{2\times2},
\end{equation*}
and on $\Gamma_h^E$
\begin{equation*}
	\left\{\wb^h\right\}=\frac12\left(\left[\left.\wb^h\right|_{\Bc^E_h}\right]_{\Gamma^E_h} 
	+ \left[\left.\wb^h\right|_{\Bc^U_h}\right]_{\Gamma^E_h}\right).
\end{equation*}

The solution to the problem stated in \S\ref{subsxn:elasticity_problem} is approximated by:
Find $\ub^h\in\Vc^h$ such that
\begin{equation*}
\begin{aligned}
\int_{\Bc^h_{\Csc}}   \cs\left( \betab^h \right):D_{DG} \delta \ub^h \;dV
+ 2\mu \alpha  \int_{\Bc^h_{\Csc}} R\left(\left\llbracket\ub^h\right\rrbracket\right):R\left(\left\llbracket\delta\ub^h\right\rrbracket\right) \;dV
\\= 
\int_{\Bc^h_{\Csc}}\bb \cdot \delta \ub^h\;dV +
\int_{\partial_\tau \Bc \cup \Csc^h_\pm } \bar \tb \cdot \delta \ub^h \; dS,
 \quad \forall \delta \ub^h\in \Vc^h,
\end{aligned}
\end{equation*}
where $\alpha$ can be any positive real number, and
\begin{equation*}
\betab^h = D_{DG} \ub^h.
\end{equation*}
\sorted{
\todo{AL: Yongxing, and the stabilization term? Also, what are the
  improvements over [2] that you mentioned? YS: Added the stabilization term. The improvement is the definition of $\Wc^h$, which contains only $P_1$ functions. That being said, I am not sure whether to indicate what the improvement is.}
  }
\end{enumerate}

We conclude the section by remarking that the approximate domain of integration
of the interaction integral for the particular choice of the method is given by 
the subset of elements with at least one vertex that lies within
$B_\rho(\xt)$ which we denote by $\mathbb K$. Refer to Fig. \ref{fig:submesh} for
an illustration of the above. 

\begin{figure}[htbp]
\centering \input{./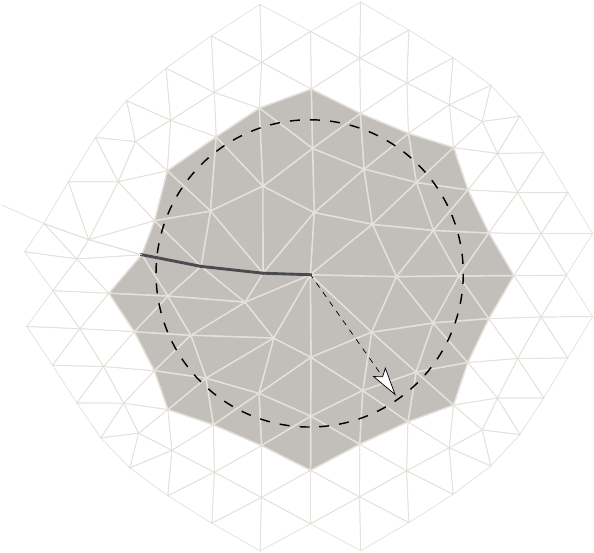_t} 
\caption{ Example of a subset of
elements ${\mathbb K}$ in a finite element mesh. The elements in the shaded region are
the elements over which the interaction integral is computed. }
\label{fig:submesh} \end{figure}

Furthermore, we exploit the quadrature rule constructed over each element
$\Gc_{T^e}$ and its boundary $\Gc_{\partial T^e} $ to form $\Gc$ and $\Gc_\Csc$, respectively.  
Namely we
let the numerical interaction integral, in this specific setting of finite
element methods, become{
 \begin{equation*} 
\begin{aligned}
\Ic^h\left[\betab^h, \betab\au,\delta \gammab\right] \approx & \sum_{T^e \in {\mathbb K}, 
|\partial T^e\cap\Csc_\pm^h|>0} \sum_{\;g \in \Gc_{ \partial T^e} }  \delta \gammab \cdot
\overline\taub^h\left(\betab^h, \betab\au\right) \big|_{
\x_g }  w_g & \\ & -\sum_{T^e \in {\mathbb K}} \sum_{g \in \Gc_{T^e} } \left[
\overline\Sigmab\left(\betab^h,\betab\au\right):\grad \delta \gammab  +
\overline\lambdab\left(\betab^h, \betab\au\right)\cdot \delta \gammab
\right]\big|_{ \x_g} w_g. & 
\end{aligned} \end{equation*}}

\subsection{Circular Arc Crack}
\newcommand{\example}{CircularArcCrack}
\newcommand{\directory}{Figures/CircularArcCrack} 


We consider an infinite plate with a circular arc shaped crack subjected  to uniform tension from infinity. The analytical solution was derived in
\cite{muskhelishvili1977}, and a recapitulation of the  solution can be found
in \cite{rangarajan2014}. The resulting stress intensity factors
for uniform far field tension loading are (see, e.g., \cite{cotterell1989})
\begin{equation*} \begin{aligned} K^e_I = \dfrac{ \sigma \cos \left(\dfrac{\alpha
}{2}\right) \sqrt{\pi  R \sin (\alpha ) }}{\sin ^2\left(\dfrac{\alpha
}{2}\right)+1}, \quad
K^e_{II} = \dfrac{\sigma \sin \left(\dfrac{\alpha }{2}\right) \sqrt{\pi  R \sin
(\alpha )}}{\sin ^2\left(\dfrac{\alpha }{2}\right)+1},
\end{aligned}
\end{equation*}
where $R$ is the radius of the circular arc crack, $\alpha$ is half the angle
subdued by the crack, and $\sigma$ is the far field tension as shown in Fig.
\ref{fig:circarce}. 
\sorted{
\todo{AL: What is the miniarrow in the middle of the circle in Fig. \ref{fig:circarce}? Is it
  trying to mark the center? We should put a little circle there then
  
  MC: I replaced the arrow with a circle}
  }

Only a finite subdomain was considered and exact tractions were specified on
the boundaries. Given the symmetry of the problem, only half of the subdomain
was modeled and appropriate symmetry boundary conditions on the axis
of symmetry were specified.  Figure \ref{fig:circarcbc} shows  a
representation of the modeled subdomain and boundary conditions. For the
simulations we took $\alpha = \pi/2, R = 1$, the modeled domain was
given by $\Bc = [0,2]\times [-1.5 ,0.5]$, and  the crack centered at the
origin.

To establish the accuracy of the methods, the solution was computed for
different levels of refinement of the discretized domain. The meshes were
generated by conforming recursive subdivisions of the coarsest mesh to the
exact geometry. 
\sorted{
\todo{AL: Maybe we should remove the divergence of the displacement
  plot, it does not contribute anything to the understanding, only to
  show that we may have computed the solution correctly, but we are
  showing the errors, so I would remove it. If we do, we need to
  remove the last sentence of last paragraph.
  
  MC: I agree, done.
  }
}

The error measures  \eqref{eq:error_interior} and
\eqref{eq:error_boundary} were observed to decrease as $\Oc(h^{1/2})$  for the
lower-order method, and as $\Oc( h )$ for the second-order method. Figure
\ref{fig:convergececac} shows the convergence plot of the solution, and Table
\ref{table:convergence_circular_arc_crack} summarizes the error as well as the
computed rates of convergence.
 
As expected, the error in the stress intensity factors are observed to converge
with order $\Oc(h^{1})$ and $\Oc(h^2)$ for the lower- and higher-order methods,
respectively. Figure \ref{fig:convergence_sif_circular_arc_crack} provides the convergence curves
for the stress intensity factors using the three pairings of \testfunctionname and
auxiliary fields. Errors and computed rates are reported in Table
\ref{table:convergence_sif_circular_arc_crack}.
\sorted{
\todo{AL: Whenever Figure is not the first word of a sentence, we
  should use Fig. Also, we only say Equation (xx) when it is the
  beginning of a sentence, otherwise we just say (xx)
  
  MC: I believe I took care of this. }
}

Lastly we show that the evaluation of the interaction integral is independent
of the support of $\delta \gammab$. To this end, Fig.
\ref{fig:radialindependence_circular_arc_crack} shows the error in computed
stress intensity factors of the most refined mesh for five values of $\rho/\rho_\text{max}$,
ranging from $\sim 0.7 $ to $1$ with $\rho_\text{max} = 0.5 $. The independence of the interaction integral on the choice of the
support of the \testfunctionname field is apparent from these results.


\begin{figure}[htbp]
\begin{minipage}{0.5\textwidth}
\centering
\begin{tikzpicture}[x=2.5cm,y=2.5cm]
\def\originx{0}
\def\originy{0}
\def\sizeb{1}
\def\sizep{1.35}
\def\alph{0}
\def\radius{0.75}
\draw [color=black!0!white,fill=none](-\sizep,-\sizep)-- (0,-\sizep) -- (\sizep,-\sizep)--(\sizep,\sizep)--(-\sizep,\sizep)--(-\sizep,0)  --(-\sizep,-\sizep);
\draw [color=black!10!white,thick,fill=black!10!white](-\sizeb,-\sizeb)-- (0,-\sizeb) -- (\sizeb,-\sizeb)--(\sizeb,\sizeb)--(-\sizeb,\sizeb)--(-\sizeb,0)  --(-\sizeb,-\sizeb);
\draw [color=black!0!white,dashed](-\sizep,-\sizep)-- (0,-\sizep) -- (\sizep,-\sizep)--(\sizep,\sizep)--(-\sizep,\sizep)--(-\sizep,0)  --(-\sizep,-\sizep);
\draw [black,-] ({- cos( \alph ) *\radius}  ,{-sin( \alph ) *\radius + \radius/2} ) 
    arc ({180+\alph}:{360-\alph}:\radius);
\draw [arrows={latex-latex},black] ({- cos( \alph ) *\radius/2}  ,{-sin( \alph ) *\radius/2 + \radius/2} ) 
    arc ({180+\alph}:{360-\alph}:\radius/2);
\node at (0,0) [below] { $2\alpha$};
\draw [color=black!40!white,thick,-latex]({- cos( \alph ) *\radius},{-sin( \alph ) *\radius + \radius/2 })-- (0, + \radius/2 )node[below]{} -- ({cos( \alph ) *\radius/2},{-sin( \alph ) *\radius/2+ \radius/2})node[above]{$R$}-- ({cos( \alph ) *\radius},{-sin( \alph ) *\radius+ \radius/2})node[below]{};
\draw[mark=*,color=black!40!white,mark size=2pt] plot coordinates {(0, + \radius/2)};
\node at (0,-\radius+\radius/2) [below] { $\Csc^\pm$};
\foreach \t in {-\sizeb,-0.90001,...,\sizeb}
            \draw [black!50!white, opacity=1.0, -latex, thick]
                (\t,-\sizeb)-- (\t,-\sizeb-\sizeb/7.5) ;
\foreach \t in {-\sizeb,-0.90001,...,\sizeb}
            \draw [black!50!white, opacity=1.0, -latex, thick]
                (\t,\sizeb)-- (\t,\sizeb+\sizeb/7.5) ;
\foreach \t in {-\sizeb,-0.90001,...,\sizeb}
            \draw [black!50!white, opacity=1.0, -latex, thick]
                (\sizeb,\t)-- (\sizeb+\sizeb/7.5,\t) ;
\foreach \t in {-\sizeb,-0.90001,...,\sizeb}
            \draw [black!50!white, opacity=1.0, -latex, thick]
                (-\sizeb,\t)-- (-\sizeb-\sizeb/7.5,\t) ;      
\node at ( 0 , -\sizeb-\sizeb/7.5 ) [below] {$ \cs |_{x,y\to \infty} = \sigma\1  $};
\end{tikzpicture}
\captionof{figure}{The circular arc crack problem.}
\label{fig:circarce}
\end{minipage}
\begin{minipage}{0.5\textwidth} 
\centering
\begin{tikzpicture}[x=2.5cm,y=2.5cm]
\def\originx{0}
\def\originy{0}
\def\sizeb{1}
\def\sizep{1.35}
\draw [color=black!0!white](-\sizep,-\sizep)-- (0,-\sizep) -- (\sizep,-\sizep)--(\sizep,\sizep)--(-\sizep,\sizep)--(-\sizep,0)  --(-\sizep,-\sizep);
\draw [color=black!10!white,thick,fill=black!10!white](-\sizeb,-\sizeb)-- (0,-\sizeb) -- (\sizeb,-\sizeb)--(\sizeb,\sizeb)--(-\sizeb,\sizeb)--(-\sizeb,0)  --(-\sizeb,-\sizeb);
\draw [color=black,thick,dashed] (-\sizeb,\sizeb )-- (0,\sizeb )node[above] {$ \cs\ee_y  = \overline\tb^e$ } -- (\sizeb,\sizeb ) -- (\sizeb,0) node[rotate=270,above] {$\cs\ee_x  = \overline\tb^e$ } -- (\sizeb,-\sizeb) -- (0,-\sizeb) node[below]{$-\cs\ee_y  = \overline\tb^e$}--(-\sizeb,-\sizeb);
\draw [color=black,thick,fill=black!10!white,dashed](-\sizeb,\sizeb)--(-\sizeb,0.1*\sizeb) node[above,rotate=90]{$\ub \cdot \ee_x = 0, \, \ee_y \cdot \cs\ee_x = 0 $ } --(-\sizeb,-\sizeb);
\draw [arrows={latex-latex}] (-\sizeb,-\sizeb*0.8)node[left] {$\ee_y$} -- (-\sizeb,-\sizeb)node[left]{$\ub = \0 $} -- (-\sizeb*0.8,-\sizeb)node[below] {$\ee_x$};
\draw [black,-] (-\sizeb,-0.5\sizeb) 
    arc (270:360:1*\sizeb);
\node at (0,0) { $\Csc^\pm$};
\node at (0.9*\sizeb,0.9*\sizeb) { $\Bc$};
\end{tikzpicture}
\captionof{figure}{Modeled subdomain. Here $\overline{\bf t}^e$ is the
exact traction on a face.}
\label{fig:circarcbc}
\end{minipage}
\end{figure}
\ifnum\tables=1
\begin{table}[htb]
\centering
\caption{ Convergence rates of the derivatives of the solution for the circular
arc crack problem }
\label{table:convergence_circular_arc_crack}
\subfloat[Domain convergence]{
\begin{tabular}{ l | c c | c c | c c }
\cmidrule{2-7}
\multicolumn{1}{ c  }{ } & \multicolumn{6}{c}{ $\| \betab^h - \betab^e\|_{L^2(\Bc) } $ }  \\
\cmidrule{2-7}
\multicolumn{1}{ c  }{ } & \multicolumn{2}{ c | }{ $P^1$}  & \multicolumn{2}{  c|  }{ $P^2$ } & \multicolumn{2}{  c  }{ $DG-XFEM$  }
 \\
 \cmidrule{1-7}
$h_\text{max}/h $& Err. & $\Oc$ &  Err. & $\Oc$ &  Err. & $\Oc$   \\
\hline\hline
1 & 0.00055 & -- &0.00025 & -- &0.00028 &  -- \\
2 &0.00037 & 0.57 & 0.00019 & 0.43 &0.00015 & 0.92\\
4 & 0.00026 & 0.52 & 0.00013 & 0.51 &0.00007 & 0.97\\
8 & 0.00018 & 0.51 & 0.00009 & 0.51 &0.00004 & 0.96\\
16 &0.00013 & 0.51 & 0.00006 & 0.51 &0.00002 & 0.98 \\
\hline
\end{tabular}
}

\subfloat[Trace convergence]{
\begin{tabular}{ l |  c c | c c | c c  }
\cmidrule{2-7}
\multicolumn{1}{ c  }{ } & \multicolumn{6}{c}{  $\| \betab^h - \betab^e \circ  \mathfrak{p} \|_{L^2(\Csc^h_\pm,r) }  $ } \\
\cmidrule{2-7}
\multicolumn{1}{ c  }{ } & \multicolumn{2}{ c | }{ $P^1$}  & \multicolumn{2}{  c|  }{ $P^2$ } & \multicolumn{2}{  c }{ $DG-XFEM$ }\\
 \cmidrule{1-7}
$h_\text{max}/h$& Err. & $\Oc$ &  Err. & $\Oc$ &  Err. & $\Oc$   \\
\hline\hline
1 & 0.00055 & --&0.00025 & -- &0.00028 &  -- \\
2 & 0.00037 & 0.57 &0.00019 & 0.43 &0.00015 & 0.92\\
4 & 0.00026 & 0.52 &0.00013 & 0.51 &0.00007 & 0.97\\
8 & 0.00018 & 0.51 &0.00009 & 0.51 &0.00004 & 0.96\\
16 & 0.00013 & 0.51 &0.00006 & 0.51 &0.00002 & 0.98 \\
\hline
\end{tabular}
}
\end{table}
\fi

\begin{figure}[htbp]
\centering
\subfloat[Convergece in the  $L^2( \Bc_{\Csc} ) $ norm] { 
\includegraphics[trim=0.65in 0.5in 0.5in 0.5in,clip ,
 width=0.5\textwidth]{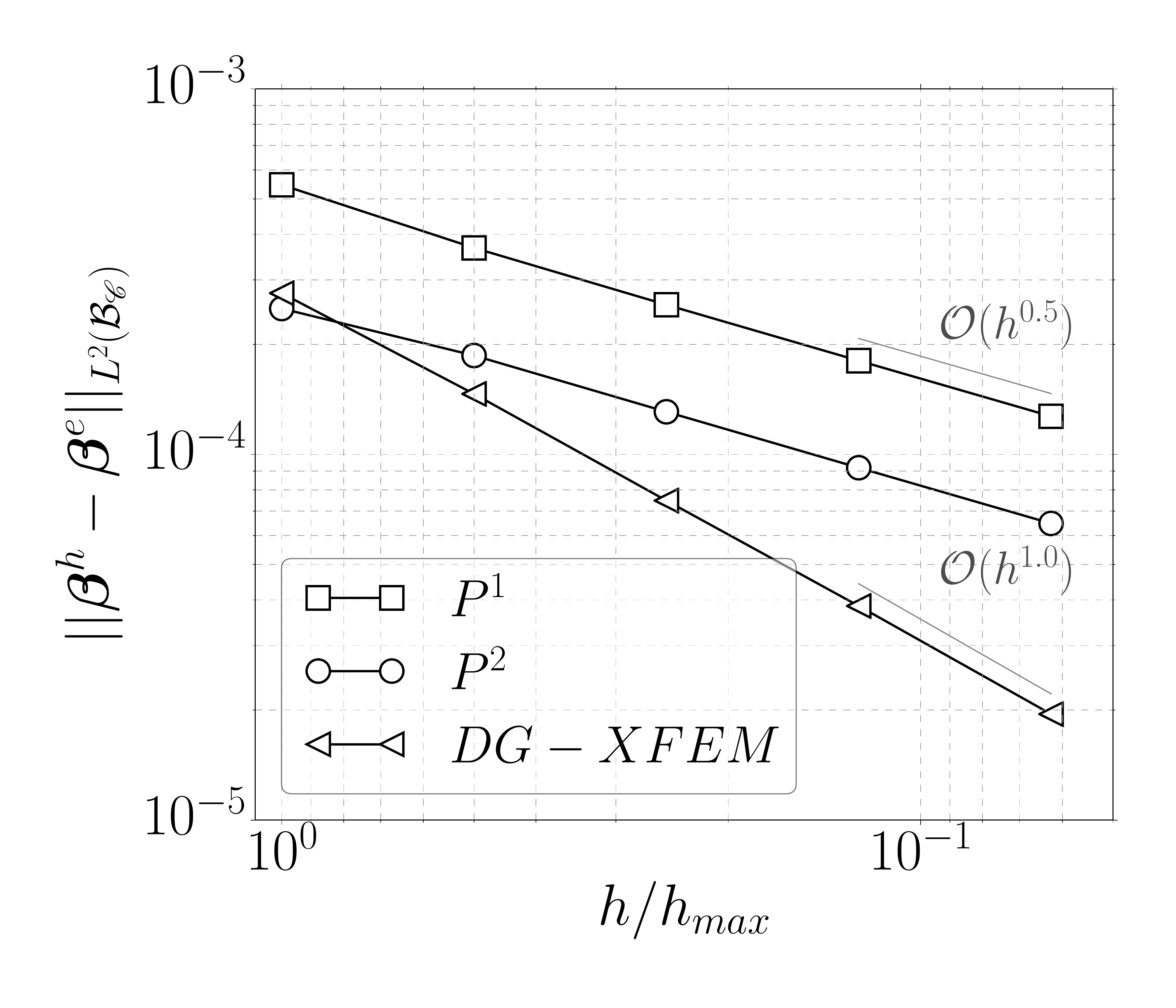}
}
\subfloat[Convergece in the  $L^2( \Csc_\pm^h,r ) $ norm] { 
\includegraphics[trim=0.65in 0.5in 0.5in 0.5in,clip ,width=0.5\textwidth]{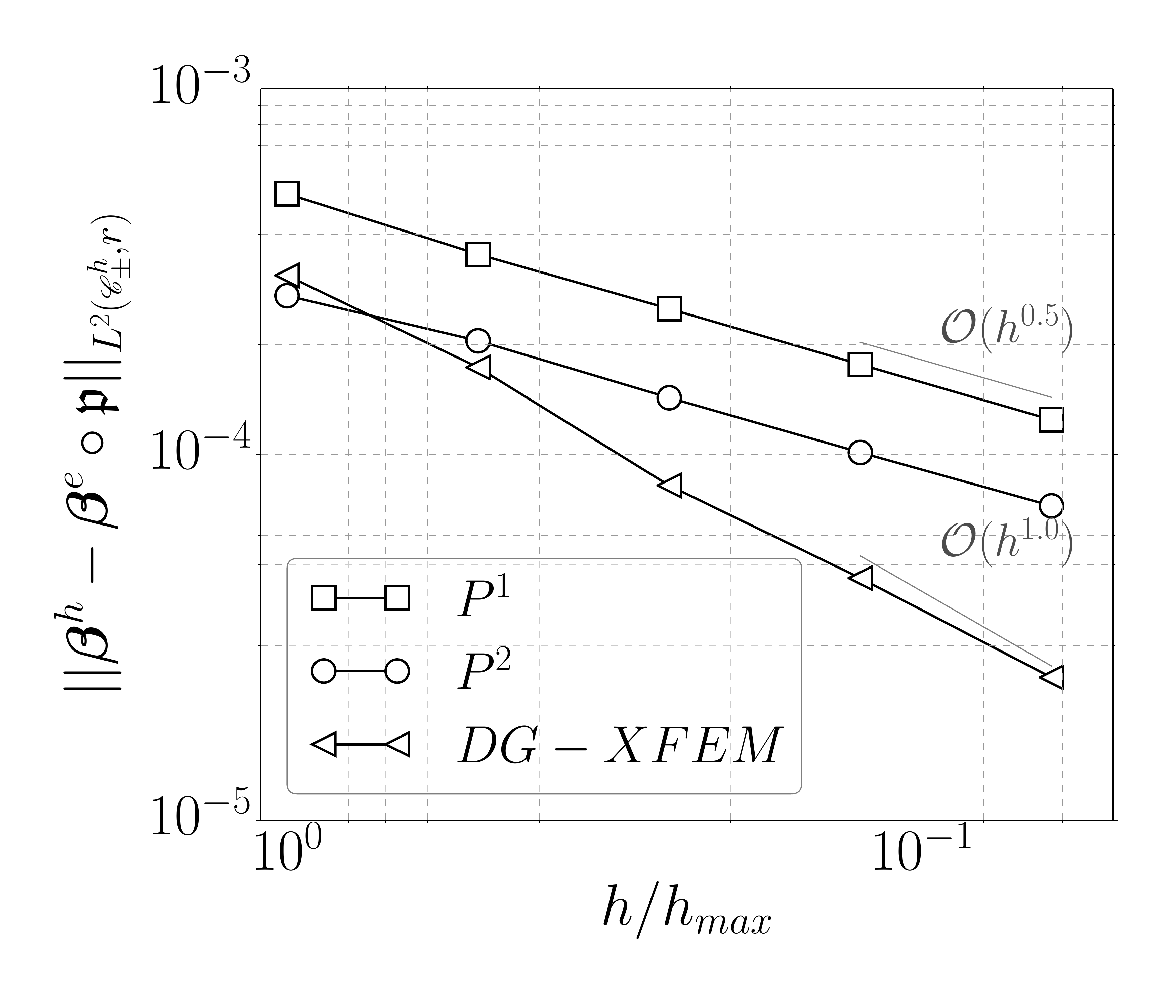} 
}
\caption{Convergence of the solution. }
\label{fig:convergececac}
\end{figure}


\ifnum\tables=1
\ifnum1=1
\begin{table}[htbp]
\centering
\caption{ Convergence rates for stress intensity factors  }
\label{table:convergence_sif_circular_arc_crack}
\subfloat[ {\color{black} Traction free auxiliary fields ($\betab\au = \betab^\text{TF}$) and tangential \testfunctionname ($\delta \gammab = \delta\gammab^\text{TAN} $)}.]{
\begin{tabular}{ l | c c | c c | c c | c c | c c | c c}
\cmidrule{2-13}
\multicolumn{1}{ c  }{ } & \multicolumn{4}{ c | }{ $P^1$}  & \multicolumn{4}{  c|  }{ $P^2$ } & \multicolumn{4}{  c }{ $DG-XFEM$ } \\
 \cmidrule{2-13}
\multicolumn{1}{ c  }{ }& \multicolumn{2}{ c | }{  $K_I$  } &\multicolumn{2}{c |}{ $K_{II } $} &  \multicolumn{2}{c|}{ $K_I$ } & \multicolumn{2}{ c |}{ $K_{II}$ } & \multicolumn{2}{ c| }{$K_I$ } & \multicolumn{2}{ c }{ $K_{II}$ }  \\
 \cmidrule{1-13}
$h_\text{max}/h $& Err. & $\Oc$ &  Err. & $\Oc$ &  Err. & $\Oc$ &  Err. & $\Oc$ &  Err. & $\Oc$ &  Err. & $\Oc$   \\
\hline\hline
1/1 &5e-02 & -- &5e-02 & -- &4e-03 & -- &2e-02 & -- &5e-03 & -- &2e-02 & -- \\
1/2 &2e-02 & 1.75 &3e-02 & 0.82 &5e-03 & 0.11 &9e-03 & 1.24 &9e-04 & 2.45 &5e-03 & 1.99 \\
1/4 &8e-03 & 1.00 &1e-02 & 1.03 &2e-03 & 1.08 &4e-03 & 1.06 &3e-04 & 1.69 &1e-03 & 1.91 \\
1/8 &4e-03 & 1.08 &7e-03 & 1.02 &9e-04 & 1.19 &2e-03 & 0.94 &1e-04 & 1.48 &3e-04 & 1.91 \\
1/16 &2e-03 & 0.99 &4e-03 & 1.03 &5e-04 & 0.92 &1e-03 & 0.92 &2e-05 & 2.20 &7e-05 & 2.23 \\

\hline
\end{tabular}
}\\
\subfloat[ {\color{black} Divergence-free ($\betab\au = \betab^\text{DFC}$) and unidirectional \testfunctionname  ($\delta \gammab = \delta\gammab^\text{UNI} $ ). }]{
\begin{tabular}{ l | c c | c c | c c | c c | c c | c c}
\cmidrule{2-13}
\multicolumn{1}{ c  }{ } & \multicolumn{4}{ c | }{ $P^1$}  & \multicolumn{4}{  c|  }{ $P^2$ } & \multicolumn{4}{  c }{ $DG-XFEM$ } \\
 \cmidrule{2-13}
\multicolumn{1}{ c  }{ }& \multicolumn{2}{ c | }{  $K_I$  } &\multicolumn{2}{c |}{ $K_{II } $} &  \multicolumn{2}{c|}{ $K_I$ } & \multicolumn{2}{ c |}{ $K_{II}$ } & \multicolumn{2}{ c| }{$K_I$ } & \multicolumn{2}{ c }{ $K_{II}$ }  \\
 \cmidrule{1-13}
$h_\text{max} /h$& Err. & $\Oc$ &  Err. & $\Oc$ &  Err. & $\Oc$ &  Err. & $\Oc$ &  Err. & $\Oc$ &  Err. & $\Oc$   \\
\hline\hline
1/1 &1e-02 & -- &3e-02 & -- &7e-03 & -- &1e-02 & -- &7e-03 & -- &1e-02 & -- \\
1/2 &2e-03 & 2.54 &2e-02 & 0.44 &9e-04 & 2.93 &5e-03 & 1.23 &1e-03 & 2.59 &1e-03 & 3.04 \\
1/4 &1e-03 & 0.91 &9e-03 & 1.05 &5e-04 & 0.92 &2e-03 & 1.33 &1e-04 & 3.37 &2e-04 & 2.42 \\
1/8 &7e-04 & 0.76 &5e-03 & 0.99 &3e-04 & 0.78 &1e-03 & 0.81 &6e-05 & 0.92 &8e-05 & 1.52 \\
1/16 &3e-04 & 0.99 &2e-03 & 1.00 &9e-05 & 1.57 &5e-04 & 1.05 &8e-06 & 2.77 &3e-05 & 1.76 \\

\hline
\end{tabular}
}\\
\subfloat[ {\color{black} Divergence-free ($\betab\au = \betab^\text{DFC}$) and tangential \testfunctionname  ($\delta \gammab = \delta\gammab^\text{TAN} $ ).}]{
\begin{tabular}{ l | c c | c c | c c | c c | c c | c c}
\cmidrule{2-13}
\multicolumn{1}{ c  }{ } & \multicolumn{4}{ c | }{ $P^1$}  & \multicolumn{4}{  c|  }{ $P^2$ } & \multicolumn{4}{  c }{ $DG-XFEM$ } \\
 \cmidrule{2-13}
\multicolumn{1}{ c  }{ }& \multicolumn{2}{ c | }{  $K_I$  } &\multicolumn{2}{c |}{ $K_{II } $} &  \multicolumn{2}{c|}{ $K_I$ } & \multicolumn{2}{ c |}{ $K_{II}$ } & \multicolumn{2}{ c| }{$K_I$ } & \multicolumn{2}{ c }{ $K_{II}$ }  \\
 \cmidrule{1-13}
$h_\text{max}/h $& Err. & $\Oc$ &  Err. & $\Oc$ &  Err. & $\Oc$ &  Err. & $\Oc$ &  Err. & $\Oc$ &  Err. & $\Oc$   \\
\hline\hline
1/1 &4e-02 & -- &1e-02 & -- &6e-03 & -- &7e-03 & -- &6e-03 & -- &8e-03 & -- \\
1/2 &1e-02 & 1.74 &1e-02 & 0.20 &4e-03 & 0.56 &3e-03 & 1.41 &8e-04 & 2.85 &2e-03 & 2.43 \\
1/4 &7e-03 & 1.00 &5e-03 & 1.04 &2e-03 & 1.06 &1e-03 & 0.94 &2e-04 & 2.09 &4e-04 & 1.88 \\
1/8 &3e-03 & 1.08 &3e-03 & 0.90 &1e-03 & 0.82 &6e-04 & 1.31 &5e-05 & 2.11 &9e-05 & 2.23 \\
1/16 &2e-03 & 0.97 &1e-03 & 1.03 &5e-04 & 1.13 &2e-04 & 1.23 &1e-05 & 2.07 &2e-05 & 2.14 \\

\hline
\end{tabular}
}
\end{table} 
\else
\begin{table}[ht] 
\centering 
\caption{Convergence of the stress intensity factors for the circular arc crack problem}
\label{table:convergence_sif_circular_arc_crack}
\foreach \method/\cap in {1/{$\betab = \betab^{DFC},\delta\gammab = \delta\gammab^{UNI}$},2/{$\betab = \betab^{DFC},\delta\gammab = \delta\gammab^{TAN}$},3/{$\betab = \betab^{TF},\delta\gammab = \delta\gammab^{TAN}$}}{
\foreach \mode in {1,2}{\subfloat[\cap -- Mode \ifnum\mode=1 I \else II \fi]{
\begin{tikzpicture}
\ifnum\mode=1%
	\begin{loglogaxis}[
				height=0.28\textheight,
				grid=minor,			
				xlabel={$h/h_{0} $},
				ylabel={$|K_I - K_I^e|/|K_I^e|$},					
				x dir=reverse,
				legend entries={$P^1$,$P^2$,$P^3$,$P^4$},
				legend columns=-1,
				legend style={at={(0.5,1.15)},anchor=north},									
				ymin=1.e-4,ymax=1.e-1
				]%
\else%
	\begin{loglogaxis}[
				height=0.28\textheight,
				grid=minor,			
				xlabel={$h/h_{0} $},		
				ylabel={$|K_{II} - K_{II}^e|/|K^e_{II}|$},		
				x dir=reverse,
				legend entries={$P^1$,$P^2$,$P^3$,$P^4$},
				legend columns=-1,
				legend style={at={(0.5,1.15)},anchor=north},									
				ymin=1.e-4,ymax=1.e-1,
				ylabel near ticks,yticklabel pos=right				
				]
\fi%
	\foreach \i [evaluate=\i as \ival using 1*\i]  in {1,2}{
		\ifnum\mode=1 
			\addplot table[x index=0,y index = 3] {Data/\example/solution_0_\i_\method.dat};						\else
			\addplot table[x index=0,y index = 4] {Data/\example/solution_0_\i_\method.dat}; 
		\fi	
	};
	\end{loglogaxis}
\end{tikzpicture}}}
} 
\label{table:convergence_sc}
\end{table}
\fi
\fi

\ifnum0=1
\begin{figure}[htbp] 
\centering
\foreach \method/\cap in {1/{$\betab = \betab^{DFC},\delta\gammab = \delta\gammab^{UNI}$},2/{$\betab = \betab^{DFC},\delta\gammab = \delta\gammab^{TAN}$},3/{$\betab = \betab^{TF},\delta\gammab = \delta\gammab^{TAN}$}}{
\foreach \mode in {1,2}{\subfloat[\cap -- Mode \ifnum\mode=1 I \else II \fi]{
\begin{tikzpicture}
\ifnum\mode=1%
	\begin{loglogaxis}[
				height=0.28\textheight,
				grid=minor,			
				xlabel={$h/h_{0} $},
				ylabel={$|K_I - K_I^e|/|K_I^e|$},					
				x dir=reverse,
				legend entries={$P^1$,$P^2$,$P^3$,$P^4$},
				legend columns=-1,
				legend style={at={(0.5,1.15)},anchor=north},									
				ymin=1.e-4,ymax=1.e-1
				]%
\else%
	\begin{loglogaxis}[
				height=0.28\textheight,
				grid=minor,			
				xlabel={$h/h_{0} $},		
				ylabel={$|K_{II} - K_{II}^e|/|K^e_{II}|$},		
				x dir=reverse,
				legend entries={$P^1$,$P^2$,$P^3$,$P^4$},
				legend columns=-1,
				legend style={at={(0.5,1.15)},anchor=north},									
				ymin=1.e-4,ymax=1.e-1,
				ylabel near ticks,yticklabel pos=right				
				]
\fi%
	\foreach \i [evaluate=\i as \ival using 1*\i]  in {1,2}{
		\ifnum\mode=1 
			\addplot table[x index=0,y index = 3] {Data/\example/solution_0_\i_\method.dat};						\else
			\addplot table[x index=0,y index = 4] {Data/\example/solution_0_\i_\method.dat}; 
		\fi	
	};
	\end{loglogaxis}
\end{tikzpicture}}}
	
}
\caption{Convergence of the stress intensity factors for the circular arc crack  }
\label{fig:convergence_sif_circular_arc_crack}
\end{figure}
\else
\begin{figure}[htbp]
\centering
\subfloat[{ \color{black} $(\betab\au , \delta\gammab)= ( \betab^\text{TF}_\text{I}, \delta\gammab^\text{TAN})$ . } ] { 
\includegraphics[trim=0.5in 0.25in 0.5in 0.55in,clip,width=0.5\textwidth]{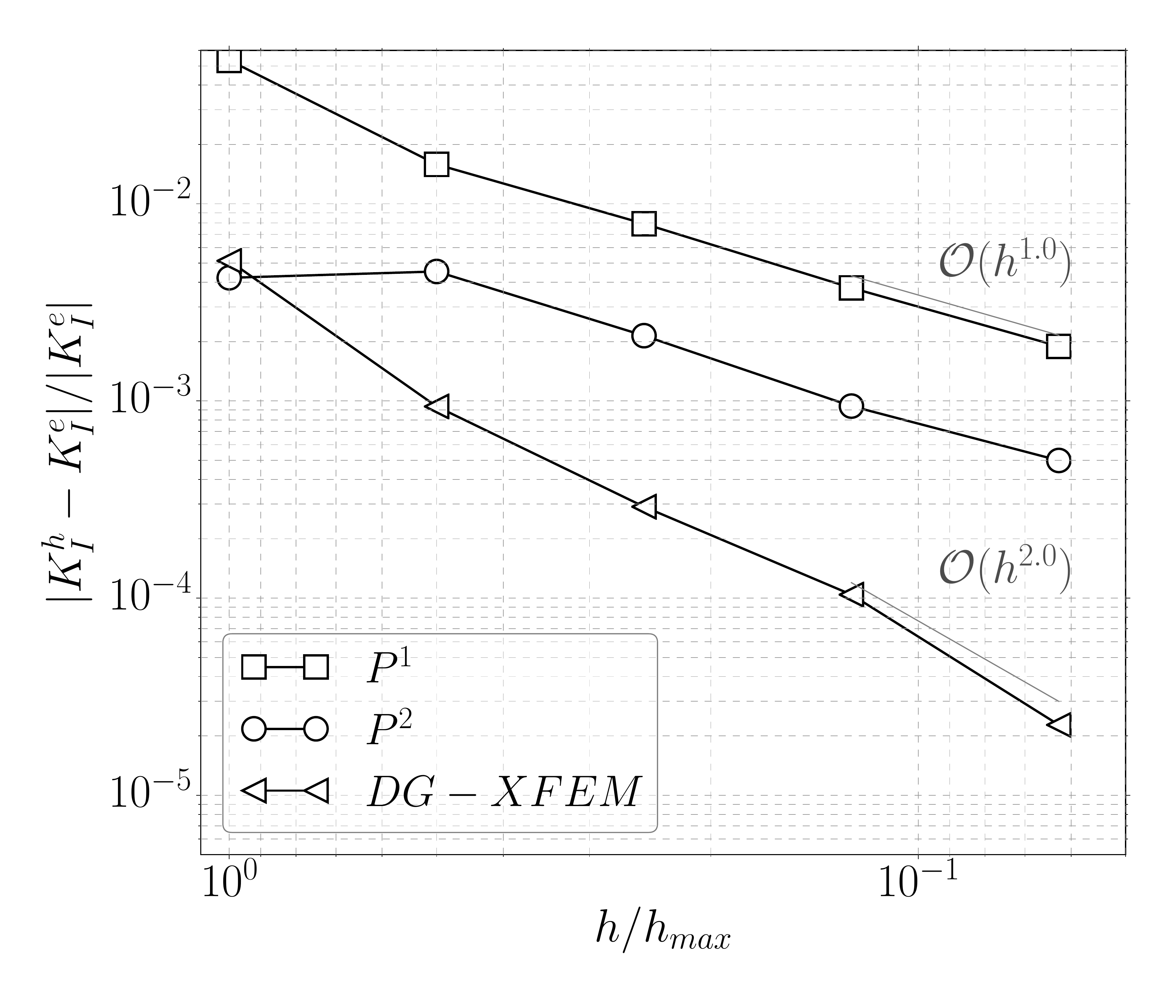} }
\subfloat[{ \color{black} $(\betab\au , \delta\gammab)= ( \betab^\text{TF}_\text{II}, \delta\gammab^\text{TAN} )$. }  ] { 
\includegraphics[trim=0.5in 0.25in 0.5in 0.55in,clip,width=0.5\textwidth]{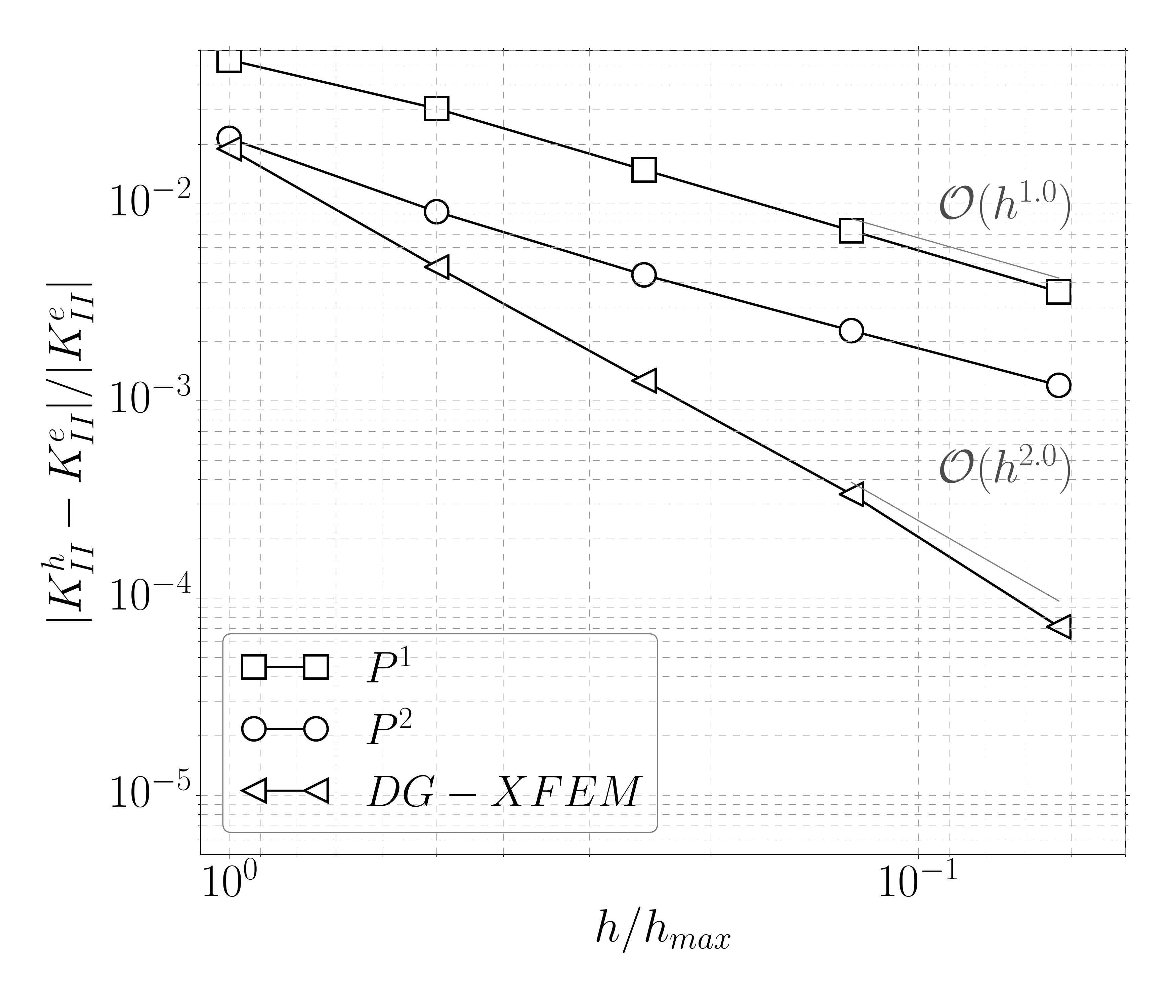} } \\
\subfloat[{ \color{black} $(\betab\au , \delta\gammab)= ( \betab^\text{DFC}_\text{I}, \delta\gammab^\text{UNI} )$. } ] { 
\includegraphics[trim=0.5in 0.25in 0.5in 0.55in,clip,width=0.5\textwidth]{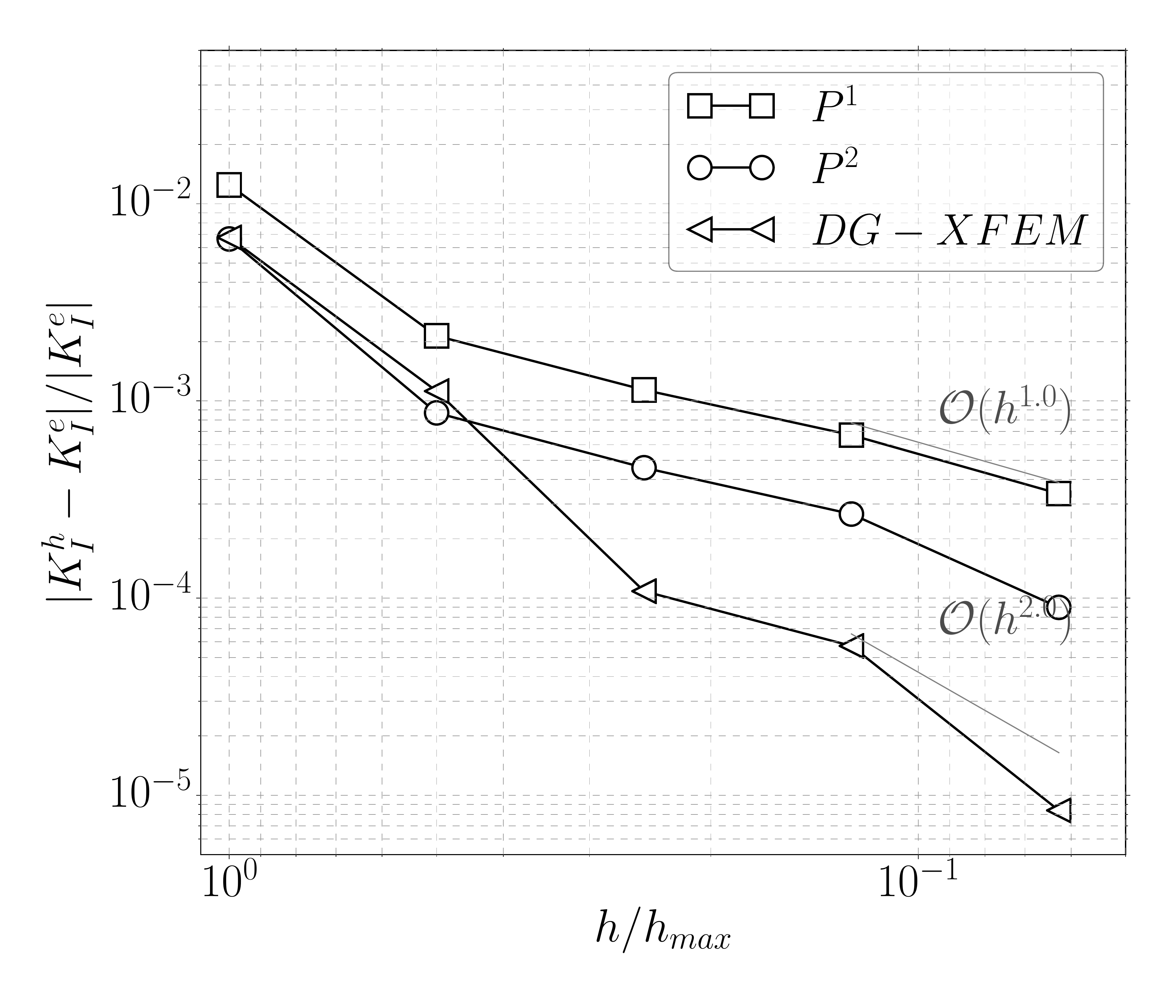} }
\subfloat[{ \color{black} $(\betab\au , \delta\gammab)= ( \betab^\text{DFC}_\text{II}, \delta\gammab^\text{UNI} )$. } ] { 
\includegraphics[trim=0.5in 0.25in 0.5in 0.55in,clip,width=0.5\textwidth]{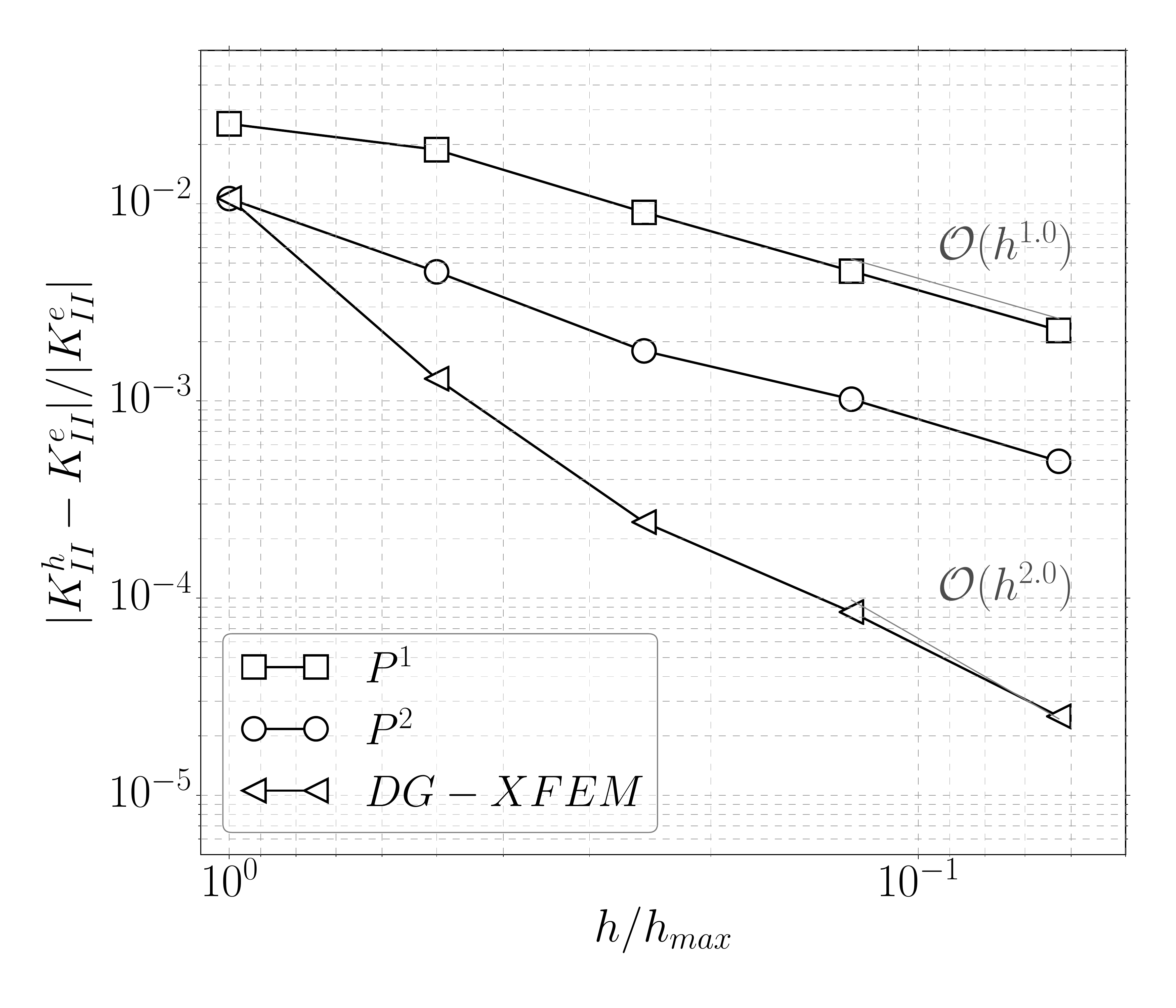} } \\
\subfloat[{ \color{black} $(\betab\au , \delta\gammab)= ( \betab^\text{DFC}_\text{I}, \delta\gammab^\text{TAN} )$. } ] { 
\includegraphics[trim=0.5in 0.25in 0.5in 0.55in,clip,width=0.5\textwidth]{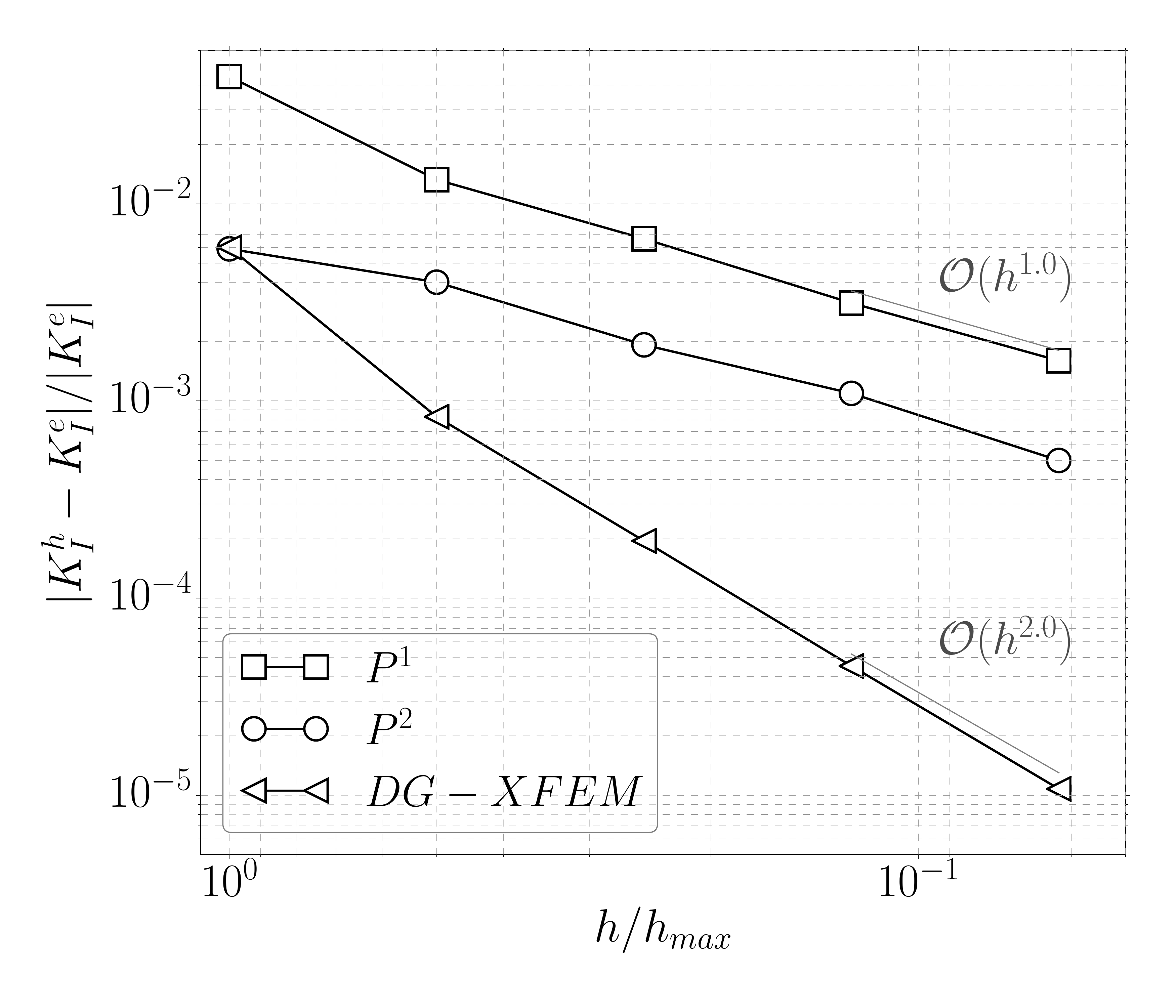} }
\subfloat[{ \color{black} $(\betab\au , \delta\gammab)= ( \betab^\text{DFC}_\text{II}, \delta\gammab^\text{TAN} )$. }] { 
\includegraphics[trim=0.5in 0.25in 0.5in 0.55in,clip,width=0.5\textwidth]{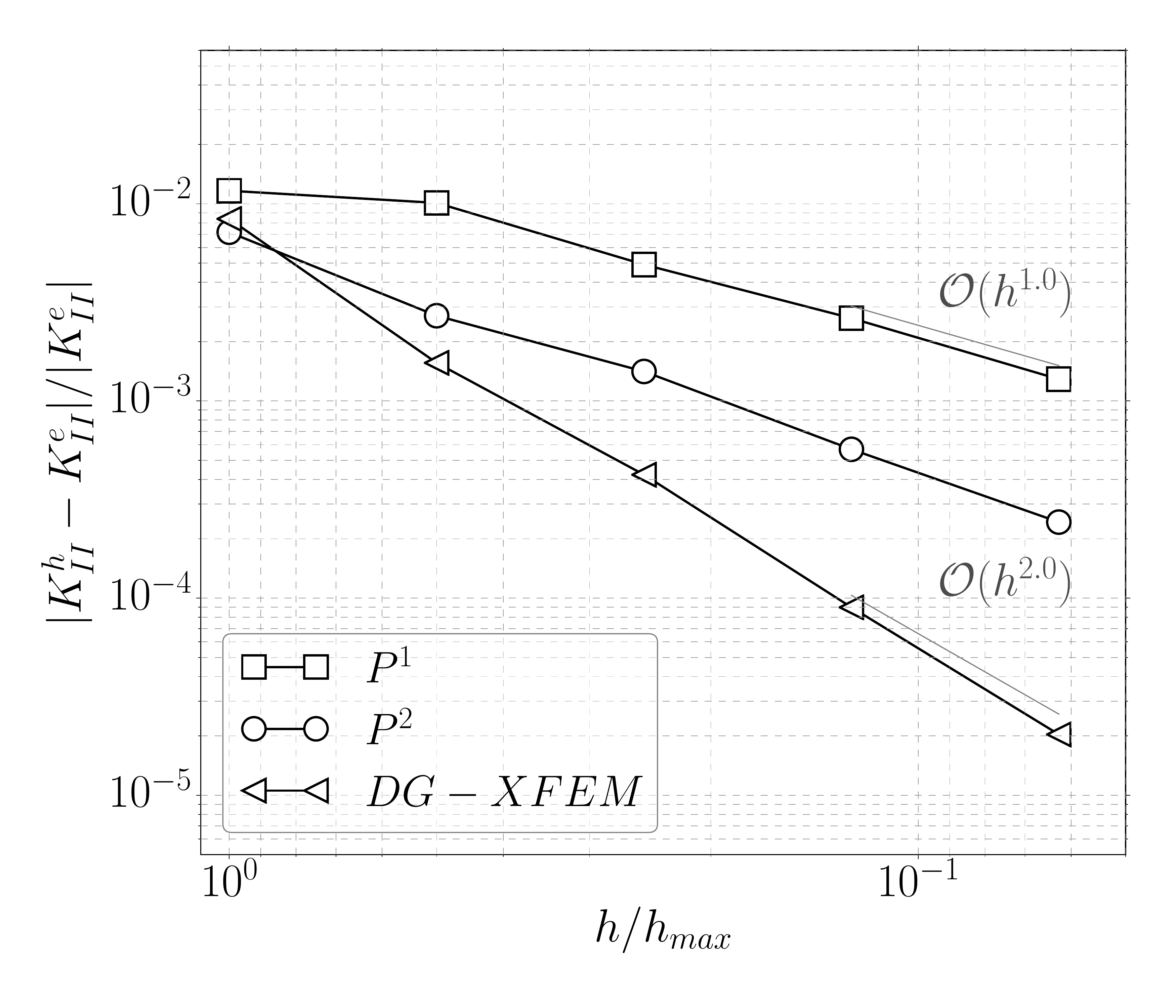} }
\caption{Convergence of the stress intensity factors for the circular arc crack  }
\label{fig:convergence_sif_circular_arc_crack}
\end{figure} 
\fi
\begin{figure}[htbp]
\centering
\includegraphics[trim=0.5in 0.65in 0.725in 0.55in,clip,width=0.5\textwidth]{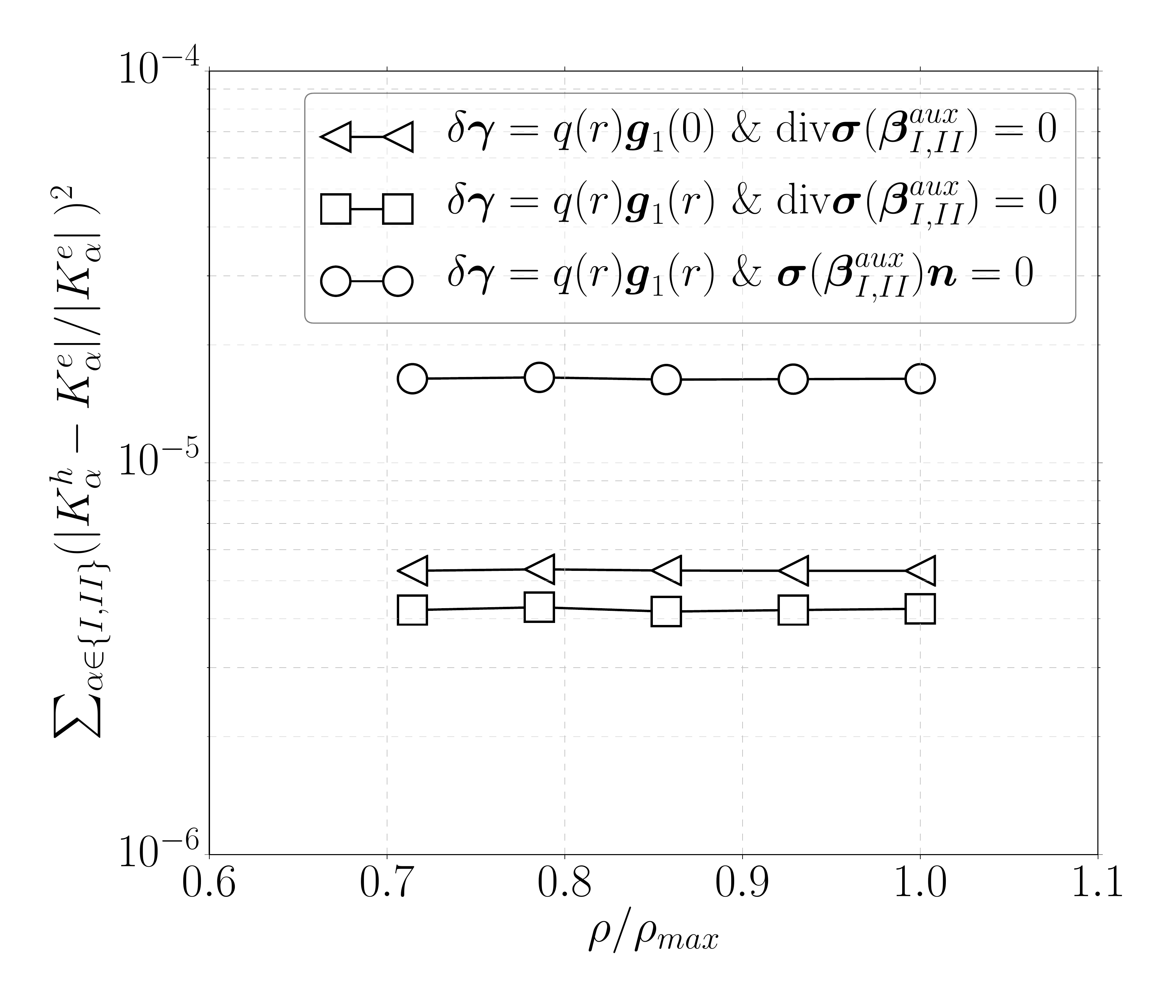}
\captionof{figure}{Contour independence of the interaction integral}
\label{fig:radialindependence_circular_arc_crack}
\end{figure}

\subsection{Power Function Crack}  
\renewcommand{\directory}{Figures/PowerFunctionCrack/}
\renewcommand{\example}{PowerFunctionCrack}
The second example we consider is the one of the power function crack $\Csc=\{(x,  x^3)|\, x \in [0,1]\}$ loaded by a force field $\bb$ and crack face traction $\overline\tb$, see Fig. \ref{fig:powcrack}.

The exact stress field is constructed by a superposition of a singular stress field $\hat \cs^e$ with a bounded field $\cs_{\bb}$ as 
\begin{equation*}
\cs^e = \hat \cs^e +  \cs_{\bb}.
\end{equation*}
The field $\hat\cs^e$ is constructed as 
\begin{equation*}
\hat\cs^e = {\left( \sigma^I_{ij} + \sigma^{II}_{ij} \right)} \gb_i(0) \otimes \gb_{j}(0),
\end{equation*}
where $\sigma_{ij}^\modes$ are given in \S\ref{sxn:app1} and are evaluated for values of $\vartheta \in [ -\pi -\zeta(r) , \pi - \zeta(r) ]$, as discussed in \S \ref{subsxn:fracture_mechanics_problem}.  The bounded stress field $\cs_{\bb} $ is constructed as
\begin{equation*}
\cs_\bb =  x \ee_x\otimes\ee_x + y \ee_y \otimes \ee_y.
\end{equation*}
Note that 
\begin{equation*}
\dive \cs^e =  \dive\hat \cs^e + \dive \cs_{\bb} =  \dive \cs_{\bb} = -\bb,
\end{equation*}
where
\begin{equation*}
\bb = -(\ee_x + \ee_y).
\end{equation*}

It is worth remarking that the stress intensity factors of $\cs^e$ will
correspond to those of the singular field $\hat \cs^e$ without any
perturbation from the bounded field. In fact, given that both $\lim_{r\to0}
\sqrt{r} \hat\cs^e ,\lim_{r\to0} \sqrt{r} \cs_{\bb} =\0 $ exist, we have 
\begin{equation*}
\lim_{r\to0} \sqrt{r} \cs^e = \lim_{r\to 0} \sqrt{r} (\hat\cs^e + \cs_{\bb}) = \lim_{r\to 0} \sqrt{r} \cs_\modes  + \lim_{r\to 0} \sqrt{r}  \cs_{\bb} = \lim_{r\to 0} \sqrt{r} \cs_\modes. 
\end{equation*}
For our example we take the stress intensity factors of  $\hat\cs^e$ to be
$K^e_I = K^e_{II} = 1$.

Figures \ref{fig:powcrack} and \ref{fig:powcrackbc} show the schematic of the problem and the modeled
domain with the applied boundary conditions, respectively. For the simulations the modeled domain was given by
$\Bc = [0,2]\times [-0.25,1.75]$.

Like for the previous example, we computed the solution for several
levels of refinement and investigated the convergence of the computed stress
intensity factors. 
\sorted{
\todo{AL: we may want to remove the plot of the div of dipl figure,
  and the reference to it in the last sentence
  
  MC: done.}
  }

The error measures \eqref{eq:error_interior} and
\eqref{eq:error_boundary} were observed to decrease as $\Oc(h^{1/2})$ and
$\Oc(h)$ for the first- and second-order methods, respectively. The values are
plotted in  Fig. \ref{fig:convergece_power_function_crack} and the errors and
the computed rates of convergence are tabulated in Table
\ref{table:convergence_power_function_crack}.

The stress intensity factors were observed to converge to the analytical value
as $\Oc( h)$ and $\Oc( h^2)$ when using the solution of the first- and second-order
methods, respectively. The values of the error in the stress intensity factors
are plotted in Fig. \ref{fig:convergece_sif_power_function_crack}, and the
errors, as well as the computed convergence rates, are provided in Table
\ref{table:convergence_sif_power_function_crack}.

Lastly,  in Fig.
\ref{fig:radial_independence_power_function_crack} we illustrate the independence of the
interaction integral on the size of the support of $\delta \gammab$ by plotting
the stress intensity factors for five values of $\rho/\rho_\text{max}$ ranging from $\sim 0.7
 $ to $1$, for $\rho_\text{max} = 0.5 $. 

\begin{figure}[htbp]
\begin{minipage}{0.5\textwidth}
\centering
\begin{tikzpicture}[x=2.5cm,y=2.5cm]
\def\originx{0}
\def\originy{0}
\def\sizeb{1}
\def\sizep{1.35}
\def\alph{60}
\def\radius{1}
\draw [color=black!0!white,fill=none](-\sizep,-\sizep)-- (0,-\sizep) -- (\sizep,-\sizep)--(\sizep,\sizep)--(-\sizep,\sizep)--(-\sizep,0)  --(-\sizep,-\sizep);
\draw [color=black!10!white,thick,fill=black!10!white](-\sizeb,-\sizeb)-- (0,-\sizeb) -- (\sizeb,-\sizeb)--(\sizeb,\sizeb)--(-\sizeb,\sizeb)--(-\sizeb,0)  --(-\sizeb,-\sizeb);
\draw [color=black!0!white,dashed](-\sizep,-\sizep)-- (0,-\sizep) -- (\sizep,-\sizep)--(\sizep,\sizep)--(-\sizep,\sizep)--(-\sizep,0)  --(-\sizep,-\sizep);
\draw [black,domain=-1:0] plot (\x, { (\x+\sizeb)^3-\sizeb/2});
\draw [arrows={latex-latex}] ( -3*0.1 ,1*0.1 + \sizeb/2 )node[left] {$\gb_2(0)$} -- (0,\sizeb/2) --  (1*0.1,\sizeb/2 +3 *0.1)node[right] {$\gb_1(0) $};
\foreach \t in {-\sizeb,-0.90001,...,-0.001}
            \draw [black!50!white, opacity=1.0, -latex, thick]
                (\t, { pow(\t+\sizeb,3)-\sizeb/2} )-- ({\t - pow(\t+\sizeb,2)*0.1*3}, { pow(\t+\sizeb,3)-\sizeb/2  + 0.1}) ;
\foreach \t in {-\sizeb,-0.90001,...,-0.001}
            \draw [black!50!white, opacity=1.0, -latex, thick]
                (\t, { pow(\t+\sizeb,3)-\sizeb/2} )-- ({\t + pow(\t+\sizeb,2)*0.1*3}, { pow(\t+\sizeb,3)-\sizeb/2  - 0.1 }) ;
\foreach \t in {-\sizeb,-0.90001,...,\sizeb}
            \draw [black!50!white, opacity=1.0, -latex, thick]
                (\t,\sizeb)-- (\t,\sizeb+\sizeb/7.5) ;
\foreach \t in {-\sizeb,-0.90001,...,\sizeb}
            \draw [black!50!white, opacity=1.0, -latex, thick]
                (\t,-\sizeb)-- (\t,{-\sizeb-\sizeb/7.5 - abs(\t)/7 }) ;                
\foreach \t in {-\sizeb,-0.90001,...,\sizeb}
            \draw [black!50!white, opacity=1.0, -latex, thick]
                (\sizeb,\t)-- (\sizeb+\sizeb/7.5+ \t*\t/7,\t) ;
\foreach \t in {-\sizeb,-0.90001,...,\sizeb}
            \draw [black!50!white, opacity=1.0, -latex, thick]
                (-\sizeb,\t)-- ( {-\sizeb-\sizeb/5 + \t*\t/7},\t) ;      
\node at ( 0 , -\sizeb-\sizeb/5 ) [below] {$ \tb = \cs^e \nb$};
\node at ( -\sizeb*0.02,-0.02*\sizeb ) [right] {$ \tb^\pm = \cs^{e} \nb^\pm$};
\node at (0.9*\sizeb,0.9*\sizeb) { $\Bc$};
\node at (-0.05\sizeb,\sizeb*0.25 ) [opacity=1.0,above right] {$f(x) = x^3 $};
\node at (\sizeb/2,-\sizeb/2 ) [opacity=1.0,below] {$\bb =\ee_x + \ee_y$};
\draw [arrows={latex-latex}] (-\sizeb+0.9\sizeb,-\sizeb*0.8+0.15\sizeb)node[left] {$\ee_y$} -- (-\sizeb+0.9\sizeb,-\sizeb+0.15\sizeb) -- (-\sizeb*0.8+0.9\sizeb,-\sizeb+0.15\sizeb)node[below] {$\ee_x$};
\end{tikzpicture}
\captionof{figure}{The power function crack problem}
\label{fig:powcrack}
\end{minipage}
\begin{minipage}{0.5\textwidth} 
\centering
\begin{tikzpicture}[x=2.5cm,y=2.5cm]
\def\originx{0}
\def\originy{0}
\def\sizeb{1}
\def\sizep{1.35}
\draw [color=black!0!white,fill=none](-\sizep,-\sizep)-- (0,-\sizep) -- (\sizep,-\sizep)--(\sizep,\sizep)--(-\sizep,\sizep)--(-\sizep,0)  --(-\sizep,-\sizep);
\draw [color=black!10!white,thick,fill=black!10!white](-\sizeb,-\sizeb)-- (0,-\sizeb) -- (\sizeb,-\sizeb)--(\sizeb,\sizeb)--(-\sizeb,\sizeb)--(-\sizeb,0)  --(-\sizeb,-\sizeb);
\draw [color=black!0!white,dashed](-\sizep,-\sizep)-- (0,-\sizep) -- (\sizep,-\sizep)--(\sizep,\sizep)--(-\sizep,\sizep)--(-\sizep,0)  --(-\sizep,-\sizep);
\draw [black,domain=-1:0] plot (\x, { (\x +\sizeb)^3 - \sizeb/2});
\node at (-0.25, { (-0.25 + \sizeb)^3 - \sizeb/2}) [right] {$\Csc^\pm$};

\draw [color=black!40!white,thick,latex-latex](-\sizeb,\sizeb/3*1.1- \sizeb/2) node[right] {$y$}-- (-\sizeb,- \sizeb/2)node[below]{} -- (-\sizeb + \sizeb/3*1.1,- \sizeb/2) node[right] {$x$};

\draw [color=black,thick,dashed] (-\sizeb,\sizeb )-- (0,\sizeb )node[above] {$ \cs\ee_y  = \overline\tb^e$ } -- (\sizeb,\sizeb ) -- (\sizeb,0) node[rotate=270,above] {$\cs\ee_x  = \overline\tb^e$ } -- (\sizeb,-\sizeb) -- (0,-\sizeb) node[below]{$-\cs\ee_y  = \overline\tb^e$}--(-\sizeb,-\sizeb);
\draw [color=black,thick,fill=black!10!white,dashed](-\sizeb,\sizeb)--(-\sizeb,0) node[above,rotate=90]{$-\cs\ee_x  = \overline\tb^e$ } --(-\sizeb,-\sizeb);
\node at ( {-0.5 +0.1 },{pow( cos(deg(-0.5+\sizeb)/2*pi ),2)*0.75 } ) [right] {$ \cs\nb^\pm = \overline\tb^{e\pm}$};
\node at (-\sizeb,-\sizeb ) [left] {$\ub = \0$};
\node at (\sizeb,-\sizeb ) [right] {$\ub\cdot\ee_y= \0$};
\draw [arrows={latex-latex}] (-\sizeb,-\sizeb*0.8)node[left] {$\ee_y$} -- (-\sizeb,-\sizeb) -- (-\sizeb*0.8,-\sizeb)node[below] {$\ee_x$};

\node at (0.9*\sizeb,0.9*\sizeb) { $\Bc$};
\end{tikzpicture}
\captionof{figure}{Modeled subdomain}
\label{fig:powcrackbc}
\end{minipage}
\end{figure}
\ifnum\tables=1
\begin{table}[htb]
\centering
\caption{ Convergence rates of the derivatives of the solution for the circular arc crack problem }
\label{table:convergence_power_function_crack}
\subfloat[Domain convergence]{
\begin{tabular}{ l | c c | c c | c c }
\cmidrule{2-7}
\multicolumn{1}{ c  }{ } & \multicolumn{6}{c}{ $\| \betab^h - \betab^e\|_{L^2(\Bc) } $ }  \\
\cmidrule{2-7}
\multicolumn{1}{ c  }{ } & \multicolumn{2}{ c | }{ $P^1$}  & \multicolumn{2}{  c|  }{ $P^2$ } & \multicolumn{2}{  c  }{ $DG-XFEM$  }
 \\
 \cmidrule{1-7}
$h_\text{max}/h $& Err. & $\Oc$ &  Err. & $\Oc$ &  Err. & $\Oc$   \\
\hline\hline
0.00055 & 0.00 &0.00025 & 0.00 &0.00028 & 0.00 &\\
0.00037 & 0.57 &0.00019 & 0.43 &0.00015 & 0.92 &\\
0.00026 & 0.52 &0.00013 & 0.51 &0.00007 & 0.97 &\\
0.00018 & 0.51 &0.00009 & 0.51 &0.00004 & 0.96 &\\
0.00013 & 0.51 &0.00006 & 0.51 &0.00002 & 0.98 &\\

\hline
\end{tabular}
}

\subfloat[Trace convergence]{
\begin{tabular}{ l |  c c | c c | c c  }
\cmidrule{2-7}
\multicolumn{1}{ c  }{ } & \multicolumn{6}{c}{  $\| \betab^h - \betab^e \circ  \mathfrak{p}  \|_{L^2(\Csc^h_\pm,r) }  $ } \\
\cmidrule{2-7}
\multicolumn{1}{ c  }{ } & \multicolumn{2}{ c | }{ $P^1$}  & \multicolumn{2}{  c|  }{ $P^2$ } & \multicolumn{2}{  c }{ $DG-XFEM$ }\\
 \cmidrule{1-7}
$h_\text{max}/h$& Err. & $\Oc$ &  Err. & $\Oc$ &  Err. & $\Oc$   \\
\hline\hline
0.00052 & 0.00 &0.00027 & 0.00 &0.00031 & 0.00 &\\
0.00035 & 0.55 &0.00020 & 0.41 &0.00017 & 0.84 &\\
0.00025 & 0.50 &0.00014 & 0.52 &0.00008 & 1.07 &\\
0.00018 & 0.51 &0.00010 & 0.50 &0.00005 & 0.84 &\\
0.00012 & 0.50 &0.00007 & 0.48 &0.00002 & 0.90 &\\

\hline
\end{tabular}
}
\end{table}
\fi
\begin{figure}[htbp]
\centering
\subfloat[Convergence in the $L^2(\Bc_{\Csc} )$ norm] { 
\includegraphics[trim=0.65in 0.5in 0.5in 0.5in,clip , width=0.5\textwidth]{\directory/L2ErrorGradu.pdf} 
}
\subfloat[Convergence in the  $L^2( \Csc_\pm^h,r ) $ norm] { 
\includegraphics[trim=0.65in 0.5in 0.5in 0.5in,clip ,width=0.5\textwidth]{\directory/L2ErrorBdGradu.pdf} 
}
\caption{Convergence of the solution.}
\label{fig:convergece_power_function_crack}
\end{figure}


\ifnum\tables=1
\begin{table}[htbp]
\centering
\caption{ Convergence rates for stress intensity factors  }
\label{table:convergence_sif_power_function_crack}
\subfloat[{\color{black} Traction free auxiliary fields ($\betab\au = \betab^\text{TF}$) and tangential \testfunctionname ($\delta \gammab = \delta\gammab^\text{TAN} $)}.]{
\begin{tabular}{ l | c c | c c | c c | c c | c c | c c}
\cmidrule{2-13}
\multicolumn{1}{ c  }{ } & \multicolumn{4}{ c | }{ $P^1$}  & \multicolumn{4}{  c|  }{ $P^2$ } & \multicolumn{4}{  c }{ $DG-XFEM$ } \\
 \cmidrule{2-13}
\multicolumn{1}{ c  }{ }& \multicolumn{2}{ c | }{  $K_I$  } &\multicolumn{2}{c |}{ $K_{II } $} &  \multicolumn{2}{c|}{ $K_I$ } & \multicolumn{2}{ c |}{ $K_{II}$ } & \multicolumn{2}{ c| }{$K_I$ } & \multicolumn{2}{ c }{ $K_{II}$ }  \\
 \cmidrule{1-13}
$h_\text{max}/h $& Err. & $\Oc$ &  Err. & $\Oc$ &  Err. & $\Oc$ &  Err. & $\Oc$ &  Err. & $\Oc$ &  Err. & $\Oc$   \\
\hline\hline

\hline
\end{tabular}
}\\
\subfloat[{\color{black} Divergence-free ($\betab\au = \betab^\text{DFC}$) and unidirectional \testfunctionname  ($\delta \gammab = \delta\gammab^\text{UNI} $ ). }]{
\begin{tabular}{ l | c c | c c | c c | c c | c c | c c}
\cmidrule{2-13}
\multicolumn{1}{ c  }{ } & \multicolumn{4}{ c | }{ $P^1$}  & \multicolumn{4}{  c|  }{ $P^2$ } & \multicolumn{4}{  c }{ $DG-XFEM$ } \\
 \cmidrule{2-13}
\multicolumn{1}{ c  }{ }& \multicolumn{2}{ c | }{  $K_I$  } &\multicolumn{2}{c |}{ $K_{II } $} &  \multicolumn{2}{c|}{ $K_I$ } & \multicolumn{2}{ c |}{ $K_{II}$ } & \multicolumn{2}{ c| }{$K_I$ } & \multicolumn{2}{ c }{ $K_{II}$ }  \\
 \cmidrule{1-13}
$h_\text{max}/h $& Err. & $\Oc$ &  Err. & $\Oc$ &  Err. & $\Oc$ &  Err. & $\Oc$ &  Err. & $\Oc$ &  Err. & $\Oc$   \\
\hline\hline

\hline
\end{tabular}
}\\
\subfloat[{\color{black} Divergence-free ($\betab\au = \betab^\text{DFC}$) and tangential \testfunctionname  ($\delta \gammab = \delta\gammab^\text{TAN} $ ).}]{
\begin{tabular}{ l | c c | c c | c c | c c | c c | c c}
\cmidrule{2-13}
\multicolumn{1}{ c  }{ } & \multicolumn{4}{ c | }{ $P^1$}  & \multicolumn{4}{  c|  }{ $P^2$ } & \multicolumn{4}{  c }{ $DG-XFEM$ } \\
 \cmidrule{2-13}
\multicolumn{1}{ c  }{ }& \multicolumn{2}{ c | }{  $K_I$  } &\multicolumn{2}{c |}{ $K_{II } $} &  \multicolumn{2}{c|}{ $K_I$ } & \multicolumn{2}{ c |}{ $K_{II}$ } & \multicolumn{2}{ c| }{$K_I$ } & \multicolumn{2}{ c }{ $K_{II}$ }  \\
 \cmidrule{1-13}
$h_\text{max}/h$& Err. & $\Oc$ &  Err. & $\Oc$ &  Err. & $\Oc$ &  Err. & $\Oc$ &  Err. & $\Oc$ &  Err. & $\Oc$   \\
\hline\hline

\hline
\end{tabular}
}
\end{table} 
\fi

\ifnum1=0
\begin{figure}[htbp] 
\centering
\foreach \method/\cap in {1/{$\betab = \betab^{DFC},\delta\gammab = \delta\gammab^{UNI}$},2/{$\betab = \betab^{DFC},\delta\gammab = \delta\gammab^{TAN}$},3/{$\betab = \betab^{TF},\delta\gammab = \delta\gammab^{TAN}$}}{
\foreach \mode in {1,2}{\subfloat[\cap -- Mode \ifnum\mode=1 I \else II \fi]{
\begin{tikzpicture}
\ifnum\mode=1%
	\begin{loglogaxis}[
				height=0.28\textheight,
				grid=minor,			
				xlabel={$h/h_{0} $},
				ylabel={$|K_I - K_I^e|/|K_I^e|$},					
				x dir=reverse,
				legend entries={$P^1$,$P^2$,$P^3$,$P^4$},
				legend columns=-1,
				legend style={at={(0.5,1.15)},anchor=north},									
				ymin=1.e-4,ymax=1.e-1
				]%
\else%
	\begin{loglogaxis}[
				height=0.28\textheight,
				grid=minor,			
				xlabel={$h/h_{0} $},		
				ylabel={$|K_{II} - K_{II}^e|/|K^e_{II}|$},		
				x dir=reverse,
				legend entries={$P^1$,$P^2$,$P^3$,$P^4$},
				legend columns=-1,
				legend style={at={(0.5,1.15)},anchor=north},									
				ymin=1.e-4,ymax=1.e-1,
				ylabel near ticks,yticklabel pos=right				
				]
\fi%
	\foreach \i [evaluate=\i as \ival using 1*\i]  in {1,2}{
		\ifnum\mode=1 
			\addplot table[x index=0,y index = 3] {Data/\example/solution_0_\i_\method.dat};						\else
			\addplot table[x index=0,y index = 4] {Data/\example/solution_0_\i_\method.dat}; 
		\fi	
	};
	\end{loglogaxis}
\end{tikzpicture}}}
	
}
\caption{Convergence of the stress intensity factors for the power function crack problem }
\label{fig:convergece_sif_power_function_crack}
\end{figure}
\else
\begin{figure}[htbp]
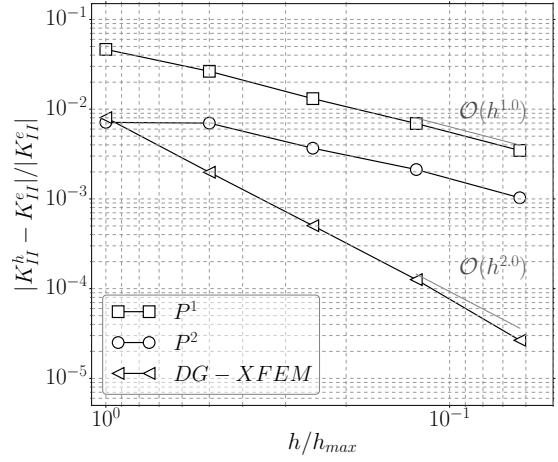
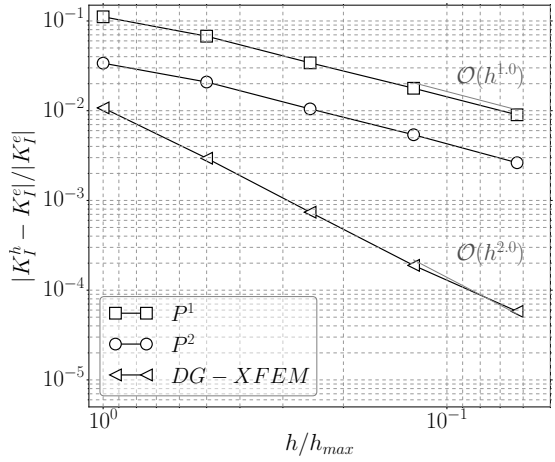
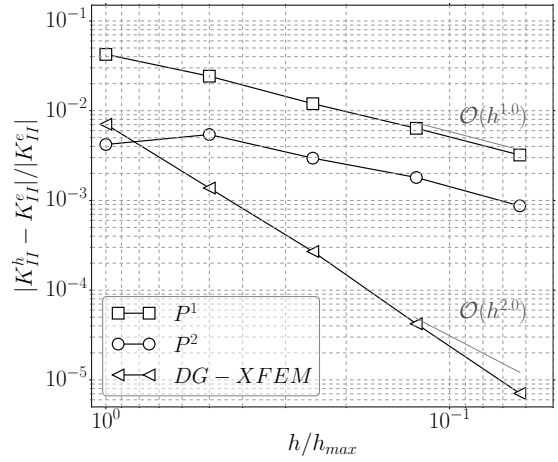
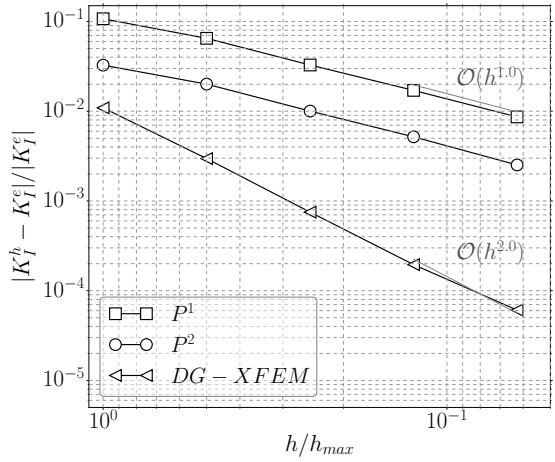
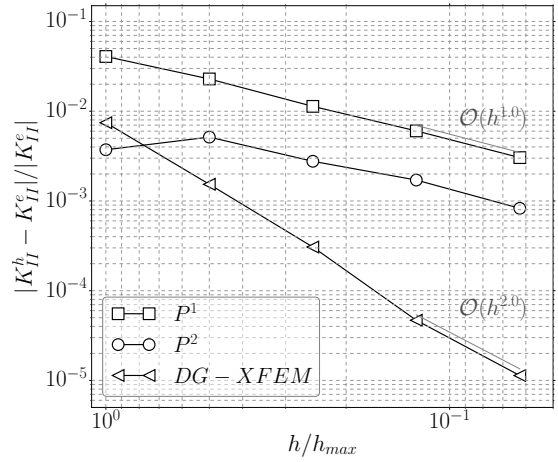

\centering
\subfloat[{ \color{black} $(\betab\au , \delta\gammab)= ( \betab^\text{TF}_\text{I}, \delta\gammab^\text{TAN})$ . } ] { 
\includegraphics[trim=0.5in 0.25in 0.5in 0.55in,clip,width=0.5\textwidth]{\directory/ERRORSSIFtracfree1.pdf} }
\subfloat[{ \color{black} $(\betab\au , \delta\gammab)= ( \betab^\text{TF}_\text{II}, \delta\gammab^\text{TAN} )$. } ] { 
\includegraphics[trim=0.5in 0.25in 0.5in 0.55in,clip,width=0.5\textwidth]{\directory/ERRORSSIFtracfree2.pdf} } \\
\subfloat[{ \color{black} $(\betab\au , \delta\gammab)= ( \betab^\text{DFC}_\text{I}, \delta\gammab^\text{UNI} )$. } ] { 
\includegraphics[trim=0.5in 0.25in 0.5in 0.55in,clip,width=0.5\textwidth]{\directory/ERRORSSIFdivefree1.pdf} }
\subfloat[{ \color{black} $(\betab\au , \delta\gammab)= ( \betab^\text{DFC}_\text{II}, \delta\gammab^\text{UNI} )$. }] { 
\includegraphics[trim=0.5in 0.25in 0.5in 0.55in,clip,width=0.5\textwidth]{\directory/ERRORSSIFdivefree2.pdf} } \\
\subfloat[{ \color{black} $(\betab\au , \delta\gammab)= ( \betab^\text{DFC}_\text{I}, \delta\gammab^\text{TAN} )$. } ] { 
\includegraphics[trim=0.5in 0.25in 0.5in 0.55in,clip,width=0.5\textwidth]{\directory/ERRORSSIFdivefreetang1.pdf} }
\subfloat[{ \color{black} $(\betab\au , \delta\gammab)= ( \betab^\text{DFC}_\text{II}, \delta\gammab^\text{TAN} )$. }] { 
\includegraphics[trim=0.5in 0.25in 0.5in 0.55in,clip,width=0.5\textwidth]{\directory/ERRORSSIFdivefreetang2.pdf} }
\caption{Convergence of the stress intensity factors for the power function crack problem }
\label{fig:convergece_sif_power_function_crack}
\end{figure} 
\fi
\begin{figure}[htbp]
\centering
\includegraphics[trim=0.5in 0.65in 0.725in 0.55in,clip,height=0.25\textheight]{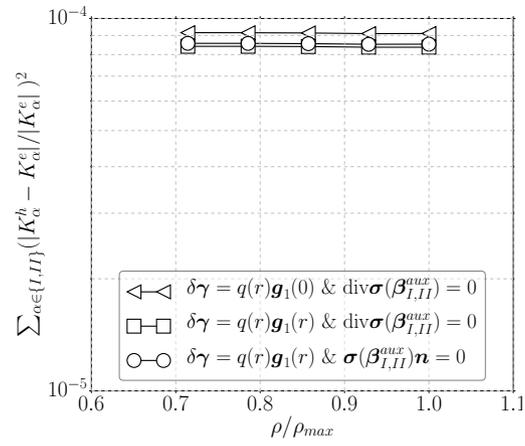}
\captionof{figure}{Contour independence of the interaction integral}
\label{fig:radial_independence_power_function_crack}
\end{figure}


\clearpage
\appendix
\section{Modes I and II Asymptotic Solutions}
\label{sxn:app1}
We recall below the components of displacement, gradient of displacements   and the stress fields for a straight crack lying on the axis $ \vartheta=\pm\pi, r \in [0,\infty)$ as derived in \cite{williams1957}. The components are given for a set of right-handed orthonormal basis with the $1$ axis aligned with the crack.

For $\kappa = 3 - 4\nu $ for plane strain and $\kappa = (3-4\nu)/(1+\nu)$ for plane stress, the displacements are given by
\eqnonumber{
\begin{aligned}
u^I_1 &=\sqrt{\frac{r}{2 \pi }} \frac{1}{2\mu} \cos \left(\frac{\vartheta }{2}\right) (\kappa-\cos (\vartheta )), \\
u^I_2 &= \sqrt{\frac{r}{2 \pi }} \frac{1}{2\mu}  \sin \left(\frac{\vartheta }{2}\right) (\kappa -\cos (\vartheta ));
\end{aligned}
}
\eqnonumber{
\begin{aligned}
u^{II}_1 &= \sqrt{\frac{r}{2 \pi }} \frac{1}{2\mu}  \sin \left(\frac{\vartheta }{2}\right) (2 + \kappa + \cos (\vartheta )), \\
u^{II}_2 &=  \sqrt{\frac{r}{2 \pi }} \frac{1}{2\mu}  \cos \left(\frac{\vartheta }{2}\right) (2 - \kappa -\cos (\vartheta )).
\end{aligned}
}

The gradient of displacements are given by 
\begin{equation*}
\begin{aligned}
\grad u_{11}^I(r,\vartheta)  &= \dfrac{1}{4 \mu \sqrt{2 \pi r }  } \cos \left(\dfrac{\vartheta }{2}\right) ( -\cos (\vartheta )+\cos (2 \vartheta )+\kappa-1),\\
\grad u_{12}^I(r,\vartheta) &= \dfrac{1}{4 \mu \sqrt{2 \pi r}   } \sin \left(\dfrac{\vartheta }{2}\right) (  \cos (\vartheta )+\cos (2 \vartheta )+\kappa + 1 ), \\
\grad u_{21}^I(r,\vartheta) &= \dfrac{1}{4 \mu \sqrt{2 \pi r}   }\sin \left(\dfrac{\vartheta }{2}\right) (\cos (\vartheta )+\cos (2 \vartheta ) - \kappa -1 ),\\
\grad u_{22}^I(r,\vartheta) &= \dfrac{1}{4 \mu \sqrt{2 \pi r}   }\cos \left(\dfrac{\vartheta }{2}\right) (\cos (\vartheta )-\cos (2 \vartheta )+ \kappa -1 ),
\end{aligned}
\end{equation*}
\begin{equation*}
\begin{aligned}
\grad u_{11}^{II}(r,\vartheta)  &=-\dfrac{1}{4 \mu \sqrt{2 \pi r }  }  \sin \left(\frac{\vartheta }{2}\right) (\cos (\vartheta )+\cos (2 \vartheta )+\kappa + 1),\\
\grad u_{12}^{II}(r,\vartheta) &= \dfrac{1}{4 \mu \sqrt{2 \pi r }  }   \cos \left(\frac{\vartheta }{2}\right) (-\cos (\vartheta )+\cos (2 \vartheta )+\kappa + 3),\\
\grad u_{21}^{II}(r,\vartheta) &=\dfrac{1}{4 \mu \sqrt{2 \pi r }  }\cos \left(\frac{\vartheta }{2}\right) (-\cos (\vartheta )+\cos (2 \vartheta )- \kappa + 1 ),\\
\grad u_{22}^{II}(r,\vartheta) &= \dfrac{1}{4 \mu \sqrt{2 \pi r }  }  \sin \left(\frac{\vartheta }{2}\right) (\cos (\vartheta )+\cos (2 \vartheta )-\kappa + 3).
\end{aligned}
\end{equation*}

Lastly the stress components are given by
\begin{equation*}
\begin{aligned}
\sigma_{11}^I(r,\vartheta)  &= \frac{1 }{\sqrt{2 \pi  r}}\left[1-\sin \left(\frac{\vartheta }{2}\right) \sin \left(\frac{3 \vartheta }{2}\right)\right] \cos \left(\frac{\vartheta }{2}\right), \\
\sigma_{22}^I(r,\vartheta) &= \frac{1}{\sqrt{2 \pi  r}} \left[\sin \left(\frac{\vartheta }{2}\right) \sin \left(\frac{3 \vartheta }{2}\right)+1\right] \cos \left(\frac{\vartheta }{2}\right), \\ 
\sigma_{12}^I(r,\vartheta) &= \frac{1 }{\sqrt{2 \pi  r}}\sin \left(\frac{\vartheta }{2}\right) \cos \left(\frac{\vartheta }{2}\right) \cos \left(\frac{3 \vartheta }{2}\right),
\end{aligned}
\end{equation*}
\begin{equation*}
\begin{aligned}
\sigma_{11}^{II}(r,\vartheta)  &= -\frac{1}{\sqrt{2 \pi  r}} \sin \left(\frac{\vartheta }{2}\right) \left[\cos \left(\frac{\vartheta }{2}\right) \cos \left(\frac{3 \vartheta }{2}\right)+2\right],\\
\sigma_{22}^{II}(r,\vartheta) &=\frac{1 }{\sqrt{2 \pi  r}}\left[1-\sin \left(\frac{\vartheta }{2}\right) \sin \left(\frac{3 \vartheta }{2}\right)\right] \cos \left(\frac{\vartheta }{2}\right),\\
\sigma_{12}^{II}(r,\vartheta) &=\frac{1}{\sqrt{2 \pi  r}} \sin \left(\frac{\vartheta }{2}\right) \cos \left(\frac{\vartheta }{2}\right) \cos \left(\frac{3 \vartheta }{2}\right).
\end{aligned}
\end{equation*}
Note that the above fields satisfy 
\begin{equation*}
\begin{aligned}
K_I\replaced[id=mc]{[\grad \ub^I]}{\left( \grad \ub^I \right)} = 1,& & K_{II}\replaced[id=mc]{[\grad\ub^I]}{\left( \grad \ub^I \right)} = 0,\\
K_I\replaced[id=mc]{[\grad \ub^{II}]}{\left( \grad \ub^{II} \right)} = 0,& & K_{II}\replaced[id=mc]{[\grad \ub^{II}]}{\left( \grad \ub^{II} \right)} = 1.
\end{aligned}
\end{equation*}

\section{Convergence of a continuous affine functional}
\label{sxn:convergence_functional}




In this appendix we first state and prove a proposition about the
convergence of linear and continuous functionals of a convergent
family of solutions.  This proof is essentially adapted from similar
results in \cite{buscaglia2000sensitivity,GuMa1994}. We next apply
this result to the interaction integral functional \eqref{eq:interint_tang_trac}.
\begin{prop*}
Let $V$ be a Hilbert space and $V^h \subset V$ be its finite
dimensional approximation. Let $a: V \times V\to \Rbb$  be a bilinear,
continuous and coercive form with $a(u,v)\le C_1\|u\|_V \|v\|_V$ for
all $u, v\in V$. Let $F:V \to \Rbb$ be linear and continuous
functional, and let $G:V\to \mathbb R$ be an affine and continuous
functional. 
Take $u$ to be the solution to $a(u,v) = F(v), \, \forall v \in V$ and $u^h$ the solution to $a(u^h,v^h) = F(v^h), \, \forall v^h \in V^h$. Let $w\in V$ be the unique member of $V$ such that
$a(v, w) = G(v)-G(0), \, \forall v \in V$.
We further assume that there exist positive real numbers $C_2$ and $C_3$ independent of $h$ such that
$\| u - u^h\|_V \leq C_2 h^k$ and  $\inf_{w^h \in V^h } \| w - w^h\|_V \leq C_3 h^k$.
Then there exists $C$ independent of $h$ such that
 \[
\left|G(u) - G\left(u^h\right) \right| \leq C h^{2k}.
 \]
\end{prop*}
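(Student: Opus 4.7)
The plan is to reduce to the classical Aubin--Nitsche duality argument. Since $G$ is affine, the map $\tilde G : V\to \mathbb R$ defined by $\tilde G(v) = G(v)-G(0)$ is linear, and by the continuity of $G$ it is also continuous. The key observation is that
\[
G(u)-G\left(u^h\right) = \tilde G\left(u-u^h\right),
\]
so it suffices to bound $|\tilde G(u-u^h)|$ by $Ch^{2k}$.

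Next, I would exploit the dual problem given in the statement: by hypothesis $w\in V$ satisfies $a(v,w) = \tilde G(v)$ for all $v\in V$ (existence and uniqueness follow from the Lax--Milgram theorem applied to the bilinear form $a$ and the linear functional $\tilde G$, once continuity of $\tilde G$ is noted). Setting $v=u-u^h$ yields the pivotal identity
\[
\tilde G\left(u-u^h\right) = a\left(u-u^h,w\right).
\]
Now I would invoke Galerkin orthogonality: by subtracting the discrete and continuous problems, $a(u-u^h,v^h)=0$ for every $v^h\in V^h$. Thus for any $w^h\in V^h$,
\[
\tilde G\left(u-u^h\right) = a\left(u-u^h, w-w^h\right).
\]

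From here the continuity bound $a(x,y)\le C_1\|x\|_V\|y\|_V$ and taking the infimum over $w^h\in V^h$ give
\[
\left|\tilde G\left(u-u^h\right)\right| \le C_1 \left\|u-u^h\right\|_V \inf_{w^h\in V^h} \left\|w-w^h\right\|_V \le C_1 C_2 C_3\, h^{2k},
\]
which is the desired estimate with $C=C_1C_2C_3$. There is no real obstacle in this argument; the only subtle point is the initial reduction of the affine $G$ to a linear functional so that Lax--Milgram produces the adjoint state $w$, and then recognizing that the $O(h^{2k})$ rate emerges automatically as the product of the primal and adjoint approximation rates, both of which are supplied as hypotheses. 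Once this proposition is in hand, its application to $\mathcal I$ in \eqref{eq:interint_tang_trac} amounts to checking continuity of the interaction integral functional in its first argument on $L^2(\mathcal B_\mathscr C; \mathbb R^{2\times 2})$, which, as already remarked in \S\ref{sec:discr-inter-integr}, is straightforward because no boundary integrals of $\betab$ enter.
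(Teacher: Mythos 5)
Your proposal is correct and follows essentially the same route as the paper's own proof: represent $G(u)-G(u^h)$ via the adjoint state $w$, use Galerkin orthogonality to replace $w$ by $w-w^h$, and conclude with the continuity of $a$ and the two approximation hypotheses. The only difference is cosmetic — you make explicit the reduction of the affine $G$ to the linear $\tilde G$ and the Lax--Milgram justification for $w$, steps the paper leaves implicit since $w$ is posited in the hypotheses.
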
 

\begin{proof}
From the definition of $w$, 
\[
G(u) - G\left(u^h\right) = a\left(u - u^h, w\right).
\]
Furthermore note that for any $w^h \in V^h$,
\[
a\left(u-u^h, w\right) = a\left(u - u^h, w - w^h\right) + a\left(u - u^h, w^h\right) = a\left(u - u^h, w - w^h\right),
\]
where we have taken advantage of  Galerkin orthogonality, i.e.,
\[
a\left(u - u^h, w^h\right) = a\left(u, w^h\right) - a\left(u^h, w^h\right) = F\left(w^h\right) - F\left(w^h\right) = 0.
\]
Therefore we have
\[
\left|G(u) - G\left(u^h\right)\right| = \left|a\left(u - u^h, w-w^h\right)\right| \le C_1\left\|u - u^h\right\|_V\left\|w-w^h\right\|_V.
\]
Since $w^h$ is arbitrary, we have
\[
\left|G(u) - G\left(u^h\right)\right| \le C_1 \left\|u - u^h\right\|_V\inf_{w^h \in V^h} \left\|w-w^h\right\|_V \le C_1C_2C_3 h^{2k}.
\]
Taking $C=C_1C_2C_3$ yields the conclusion.
\end{proof}

The application of this proposition to the interaction integral
functionals here requires some additional work to account for the
difference between domains in curvilinear cracks, and the use of
quadrature rules. 
However, disregarding these differences, and
assuming that $\Bc^h_\Csc = \Bc_\Csc$ and that exact quadrature is
adopted, we have $\Ic^h=\Ic$, and for the standard finite element
method in \S \ref{subsxn:numerical_elasticity_solution}, $\Vc^h\subset
H^1(\Bc_\Csc; \Rbb^2)$, $\forall h$. Now, in its first argument, $\Ic[\betab,
\betab^\text{TF}, \delta \gammab^\text{TAN}]$ is affine and continuous
in $L^2(\Bc_\Csc; \Rbb^{2\times 2})$, so we set
$G(\ub) = \Ic[\grad \ub,\betab^\text{TF},\delta\gammab^\text{TAN}]$
(c.f.  \eqref{eq:interint_tang_trac}) and can use the above proposition. Since for the standard finite
element method it is known that  there exists  $C>0$ independent of $h$
such that $\| \ub -  \ub^h\|_{H^1(\Bc_\Csc; \Rbb^{2})} \le C h^k$, $\forall h$, then 
\[
\left| G(\ub) - G\left( \ub^h\right) \right| \leq \overline{C} h^{2 k }
\]
for some $\overline{C} \in \Rbb^+$.

This proposition is not directly applicable to the discontinuous
Galerkin method in \S\ref{subsxn:numerical_elasticity_solution},
because in this case it is also necessary to account for the use of an
approximation space that does not conform to $H^1$. Finally,  the two
functionals $G(\ub)=\Ic[\grad \ub,\betab^\text{DFC},\delta\gammab]$ in
\eqref{eq:interint_uni_dive} and \eqref{eq:interint_tang_dive} are not
continuous in  $H^1(\Bc_\Csc; \Rbb^2)$, because of the evaluations of
$\grad \ub$ on the crack faces, so we cannot directly apply the above result.



\clearpage
\bibliography{references}
\bibliographystyle{wileyj}

\end{document}